\documentclass[12pt]{amsart}
\usepackage{amsmath,amssymb,verbatim,amscd,bm,color,yhmath,bm, enumerate}
\usepackage[normalem]{ulem}
\usepackage[driverfallback=dvipdfm]{hyperref}

\usepackage[active]{srcltx} 

\newtheorem{lem}{Lemma}[section]
\newtheorem{thm}[lem]{Theorem}
\newtheorem{prop}[lem]{Proposition}
\newtheorem{cor}[lem]{Corollary}
\newtheorem{remark}[lem]{Remark}

\theoremstyle{definition}
\newtheorem{defn}{Definition}[section]

\newtheorem{rem}{Remark}
\newtheorem*{nota}{Notation}

\numberwithin{equation}{section}

 \newcommand{\T}{\mathbb T}
 \newcommand{\N}{\mathbb N}

 \newcommand{\la}{\lambda}
 \newcommand{\D}{\displaystyle}

 \newcommand{\q}{\quad}
 \newcommand{\qq}{\qquad}

 \newcommand{\Ch}[1]{\operatorname{Ch}(#1)}

 \newcommand{\ov}{\overline}

 \newcommand{\fr}{\frac}

 \newcommand{\set}[1]{\{ #1 \}}

 \newcommand{\cha}{\operatorname{Ch}(A)}
 \newcommand{\chb}{\Ch{B}}

 \newcommand{\al}{\alpha}
 \newcommand{\be}{\beta}
 
 \renewcommand{\Re}{{\rm Re}\,}

 \renewcommand{\d}{\delta}
 \renewcommand{\set}[1]{\{ #1 \}}

 \newcommand{\Vinf}[1]{\Vert #1 \Vert}

 \newcommand{\e}{\varepsilon}

 \newcommand{\ran}{\operatorname{Ran}}
 \newcommand{\rpi}{\ran_\pi}
 
 \DeclareSymbolFont{largesymbol}{OMX}{yhex}{m}{n}
 \DeclareMathAccent{\Widehat}{\mathord}{largesymbol}{"62}


 \DeclareSymbolFont{largesymbol}{OMX}{yhex}{m}{n}


 \newcommand{\SA}{S(A)}
 \newcommand{\SB}{S(B)}
 \newcommand{\del}{\Delta}
 \newcommand{\deli}{\Delta^{-1}}
 \newcommand{\alx}{\al_x}

 \newcommand{\KP}{K_+}
 \newcommand{\KM}{K_-}
 \newcommand{\LP}{L_+}
 \newcommand{\LM}{L_-}


\usepackage{ulem}
\usepackage{marginnote}
\usepackage{geometry}


\begin{document}



\title[Tingley's problem for function algebras]
{Exploring new solutions to Tingley's problem for function algebras}

\author[M. Cueto-Avellaneda]{Mar{\'i}a Cueto-Avellaneda}
\address[M. Cueto-Avellaneda]{
School of Mathematics, Statistics and Actuarial Science, University of Kent, Canterbury, Kent CT2 7NX, UK}
\email{emecueto@gmail.com}

\author[D. Hirota]{Daisuke Hirota}
\address[D. Hirota]{Graduate School of Science and Technology, Niigata University, Niigata 950-2181, Japan}
\email{hirota@m.sc.niigata-u.ac.jp}

\author[T. Miura]{Takeshi Miura}
\address[T. Miura]{Department of Mathematics, Faculty of Science, Niigata University, Niigata 950-2181, Japan}
\email{miura@math.sc.niigata-u.ac.jp}

\author[A.M. Peralta]{Antonio M. Peralta}
\address[A.M. Peralta]{Instituto de Matem{\'a}ticas de la Universidad de Granada (IMAG), Departamento de An{\'a}lisis Matem{\'a}tico, Facultad de
Ciencias, Universidad de Granada, 18071 Granada, Spain.}
\email{aperalta@ugr.es}

\subjclass[2010]{46J10, 46B04, 46B20, 46J15, 47B49, 17C65}
\keywords{Isometry, Tingley's problem, Uniformly closed function algebras, abelian JB$^*$-triples}

\begin{abstract}
In this note we present two new positive answers to Tingley's problem in certain subspaces of function algebras. In the first result we prove that every surjective isometry between the unit spheres, $S(A)$ and $S(B)$, of two uniformly closed function algebras $A$ and $B$ on locally compact
Hausdorff spaces can be extended to a surjective real linear isometry from $A$ onto $B$. In a second goal we study surjective isometries between the unit spheres of two abelian JB$^*$-triples represented as spaces of continuous functions of the form $$C^{\mathbb{T}}_0 (X) := \{ a \in C_0(X) : a (\lambda t) = \lambda a(t) \hbox{ for every } (\lambda, t) \in \mathbb{T}\times X\},$$ where $X$ is a (locally compact Hausdorff) principal $\mathbb{T}$-bundle. We establish
that every surjective isometry $\Delta: S(C_0^{\mathbb{T}}(X))\to S(C_0^{\mathbb{T}}(Y))$ admits an extension to a surjective real linear isometry between these two abelian JB$^*$-triples.
\end{abstract}

\maketitle


\section{Preliminaries}

The problem of extending a surjective isometry between the unit spheres of two Banach spaces --named \emph{Tingley's problem} after the contribution of D. Tingley in \cite{Ting1987}-- is nowadays a trending topic in functional analysis (see a representative sample in the references \cite{
CabSan19, CuePer18, CuePer19, FerJorPer2018, FerPe17c, FerPe17b, FerPe17d, FerPe18Adv, Mori2017, MoriOza2018, Pe2020FILOMAT, CamposJGarciaPacheco21} and the surveys \cite{YangZhao2014, Pe2018}). This isometric extension problem remains open for Banach spaces of dimension bigger than or equal to 3 though. In fact, it has not been until recently that a complete positive solution for 2-dimensional Banach spaces was obtained by T. Banakh in \cite{Banakh21SolutionTingley2dim}, a result culminating a tour-de-force by several  researchers (cf. \cite{Banakh21Smoothtwodim,BanakhCabello21,CabSan19}).\smallskip

In the last years, a growing interest on Tingley's problem for surjective isometries between the unit spheres of certain function algebras has attracted different specialists to approach this problem. The pioneering paper by R. Wang \cite{Wang} inspired many subsequent results. O. Hatori, S. Oi and R.S. Togashi proved that each surjective isometry between the unit spheres of two uniform algebras can be always extended to a surjective real linear isometry between the uniform algebras (cf. \cite{HatOiTog}). We recall that a uniform algebra is a closed subalgebra of $C(K)$ which contains constants and separates the points of $K$, where the latter is a compact Hausdorff space and $C(K)$ denotes the Banach algebra of all complex-valued continuous functions on $K$. This conclusion was improved by O. Hatori by showing that each uniform algebra $\mathcal{A}$ satisfies the complex Mazur-Ulam property, that is, every surjective isometry from its unit sphere onto the unit sphere of another Banach space $E$ extends to a surjective real linear isometry from $\mathcal{A}$ onto $E$ (see \cite[Theorem 4.5]{Hat2021}).\smallskip

This note is aimed to present our recent advances on Tingley's problem for some Banach spaces which are representable as certain function spaces. More concretely, in section \ref{sec:function algebras} we study Tingley's problem in the case of a surjective isometry between the unit spheres of two uniformly closed function algebras. Note that uniformly closed function algebras constitute a strictly wider class than that given by uniform algebras. Indeed, we begin with a locally compact Hausdorff space $X$. A {\it uniformly closed function algebra} $A$ on $X$ is a uniformly closed and strongly separating (i.e. for each $x\in X$ and $y,z\in X$ with $y\neq z$, there exist $f,g\in A$ such that $f(x)\neq 0$ and $g(y)\neq g(z)$) subalgebra of the algebra, $C_0(X),$ of all continuous complex-valued functions on $X$ vanishing at infinity. We can obviously regard $A$ as a subalgebra of $C_0(X\cup\{\infty\})$, where $X\cup\{\infty\}$ denotes the one-point compactification of $X$. However, it is worth observing that, under such an identification, $A$ never contains the constant functions, so, it is not a uniform algebra.\smallskip

The main conclusion of section \ref{sec:function algebras} proves that surjective isometry $\del\colon\SA\to\SB$ between the unit spheres of two uniformly closed function algebras $A$ and $B$, extends to a surjective real linear isometry $T\colon A\to B$ (see Theorem \ref{thm1}). Our arguments are based on an appropriate use of the Choquet boundary of each uniformly closed function algebra, the existence of Urysohn's lemma type properties for this Choquet boundary (as in \cite{RaoRoy2005, RaoRoy2005II, FleJam, miu1}) and a good description of the elements in the image of $\Delta$ at points in the Choquet boundary. The proof of the already mentioned result by Hatori, Oi and Togashi in \cite{HatOiTog} is inspired by some of Wang's original tools in \cite{Wang}. In this manuscript we apply similar techniques, however, the arguments here provide a different point of view, and are not mere extensions to the case of uniformly closed function algebras as non-unital versions of uniform algebras.\smallskip

The third section of this note is focused on the study of Tingley's problem for a surjective isometry between the unit spheres of two abelian JB$^*$-triples. As it is well-known, and explained in section \ref{sec: abelian JBstar triples}, JB$^*$-triples are precisely those complex Banach spaces whose open unit ball is a bounded symmetric domain (cf. \cite{Ka83}). A JBW$^*$-triple is a JB$^*$-triple which is also a dual Banach space. It has been recently shown (cf. \cite{BeCuFerPe2018,KalPe2019}) that every surjective isometry from the unit sphere of a JBW$^*$-triple onto the unit sphere of another Banach space extends to a surjective real linear isometry between the spaces. Few or nothing is known for general JB$^*$-triples. The elements in the subclass of abelian JB$^*$-triples can be identified, thanks to a Gelfand representation theory, with subspaces of continuous functions. Indeed, let $X$ be a principal $\mathbb{T}$-bundle (i.e. a subset of a Hausdorff locally convex complex space such that $0 \notin X$, $X \cup \{0\}$ is compact, and $\mathbb{T} X \subseteq X$, where $\mathbb{T}= S(\mathbb{C})$). When $X$ is regarded as a locally compact Hausdorff space, the closed subspace of $C_0(X)$ defined by $$C^{\mathbb{T}}_0 (X) := \{ a \in C_0(X) : a (\lambda t) = \lambda a(t) \hbox{ for every } (\lambda, t) \in \mathbb{T}\times X\},$$ is not, in general, a subalgebra of $C_0(X)$ but it is closed for the triple product $\{a,b,c\} = a \overline{b} c$ ($\,a,b,c\in C^{\mathbb{T}}_0 (X) $). The Gelfand representation theory affirms that each abelian JB$^*$-triple is isometrically isomorphic to some  $C^{\mathbb{T}}_0 (X)$ for a suitable principal $\mathbb{T}$-bundle $X$ (see \cite[Corollary 1.11]{Ka83}).  These spaces are also related to Lindenstrauss spaces (cf. \cite[Theorem 12]{Ol74}).\smallskip

The main conclusion in section \ref{sec: abelian JBstar triples} establishes that each surjective isometry
$\Delta : S(C^{\mathbb{T}}_0 (X)) \to S(C^{\mathbb{T}}_0 (Y)),$ with $X$ and $Y$ being two principal $\mathbb{T}$-bundles, admits an extension to a surjective real linear isometry $T : C^{\mathbb{T}}_0 (X)\to C^{\mathbb{T}}_0 (Y)$ (see Theorem \ref{t Tingleys problem for commutative JBstar triples}). This statement is complemented with Lemma \ref{l form of surjective real linear isometries between abelian JBstar triples} where it is shown that for each surjective real linear isometry
$T : C^{\mathbb{T}}_0 (X) \to C^{\mathbb{T}}_0 (Y)$ there exist a $\mathbb{T}$-invariant clopen subset $D\subseteq X$ and a $\mathbb{T}$-equivariant homeomorphism $\phi : Y\to X$ satisfying $$T(a) (s) = a(\phi(s)), \hbox{ for all } a\in C^{\mathbb{T}}_0 (X) \hbox{ and for all } s\in \phi^{-1}(D),$$ and $$T(a) (s) = \overline{a(\phi(s))}, \hbox{ for all } a\in C^{\mathbb{T}}_0 (X)\hbox{ and for all } s\in \phi^{-1}(X\backslash D).$$

Tingley's problem for surjective isometries between the unit spheres of function spaces deserves its own attention, and a self-contained treatment. In a forthcoming paper we shall explore the Mazur--Ulam property for the function spaces studied in this note.

\section{Tingley's problem for uniformly closed function algebras}\label{sec:function algebras}

Let $X$ be a locally compact Hausdorff space. Along this note we denote by $C_0(X)$ the set of all continuous complex-valued functions $f$ on $X$, which vanish at
infinity in the usual sense: for each $\e>0$ the set $\set{x\in X:|f(x)|\geq\e}$ is a compact subset of
$X$. Then $C_0(X)$ is a commutative Banach algebra under pointwise operations and the supremum norm $\Vinf{f}=\sup_{x\in X}|f(x)|$ ($f\in C_0(X)$).
A subset $B$ of $C_0(X)$ is said to be {\it strongly separating}, if for each $x\in X$ and
$y,z\in X$ with $y\neq z$, there exist $f,g\in B$ such that $f(x)\neq0$ and $g(y)\neq g(z)$.
A {\it uniformly closed function algebra} $A$ on $X$ is a uniformly closed and strongly
separating subalgebra of $C_0(X)$.\smallskip

For each function $f\in A$ the symbol $\ran(f)$ will stand for the range of $f$. We set  $\rpi(f)=\set{z\in\ran(f):|z|=\Vinf{f}}$ ($f\in A$).
A {\it peaking function} $g$ for $A$ is a function of $A$ with
$\rpi(g)=\set{1}$; that is, if $g\in A$ satisfies $\Vinf{g}=1$ and $|g(x)|=1$ for $x\in X$,
then $g(x)=1$. A compact subset $\mathcal{P}\subset X$ is called a \emph{peak set} of $A$ if there exists a peaking function $f \in A$ for which $\mathcal{P}= \{ x \in X : f(x) = 1\}$. A subset which coincides with an intersection of a family of peak sets of $A$ is called a \emph{weak peak set} of $A$. A \emph{peak point} (respectively, a \emph{weak peak point}) of $A$ is an element $x \in X$  satisfying that $\{x\}$ is a peak set (respectively, a \emph{weak peak set}) of $A$. The {\it Choquet boundary} or  the \emph{strong boundary} for $A$,  denoted by $\cha$, is the set of all weak peak points of $A$.  It is shown in \cite[Theorem 2.1]{RaoRoy2005II} (see also \cite{RaoRoy2005}) that $\cha$ is precisely the set of all $x \in X$ such that the evaluation functional at the point $x,$  $\delta_x$, is an extreme point of the unit ball of the dual space of $A$ (cf. \cite[Definition 2.3.7]{FleJam}). It is well known that $\cha$ is indeed a boundary (norming set) for $A$; furthermore, $\cha$ satisfies the following properties (see, for example, \cite[Propositions~2.2 and 2.3]{miu1}):\label{properties of the Choquet boundary}
\begin{enumerate}[$(1)$]
\item For each $f\in A$ there exists $x\in\cha$ such that $|f(x)|=\Vinf{f}$;
\item
For each $\e>0$, $x\in\cha$ and each open subset $O$ in $X$ with $x\in O$ there exists a peaking function $u\in A$ such that $u(x)=1$ and $|u|<\e$ on $X\setminus O$.
\end{enumerate}

The following is the main result of this section.

\begin{thm}\label{thm1}
Let $\SA$ and $\SB$ be unit spheres of two uniformly closed function algebras $A$ and $B$, respectively.
If $\del\colon\SA\to\SB$ is a surjective isometry, then
there exists a surjective, real linear isometry $T\colon A\to B$ such that $T=\del$ on $\SA$.
\end{thm}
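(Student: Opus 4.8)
The plan is to mimic the Hatori--Oi--Togashi strategy for uniform algebras (cf. \cite{HatOiTog}), but carried out intrinsically on the Choquet boundary $\cha$ rather than on a compactification, since the uniformly closed function algebras here never contain constants. The central object is the behavior of $\del$ on the ``peaking part'' of $\SA$. First I would record the elementary consequences of $\del$ being a surjective isometry between unit spheres: for $f \in \SA$ one has $\V{\del(f)-\del(g)} = \V{f-g}$ and, in particular, $\del(-f) = -\del(f)$ (this uses the Mazur--Ulam-type argument that $\V{f-(-f)}=2$ forces $\del(-f)$ to be the antipode, together with the fact that the unit spheres of $A$ and $B$ are connected and the metric midpoint/diameter characterization works here exactly as in the unital case). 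One also gets that $\del$ preserves the relation $\V{f-g}=2$, i.e. it preserves antipodality, and more generally preserves $\max(\V{f-g},\V{f+g})$-type quantities.

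The key step is to understand, for each $x \in \cha$, the set of values $\{\del(f)(y) : f \in \SA,\ f \text{ peaks suitably at } x\}$, and to extract from it a point $\phi(x)$ in $\chb$ and a unimodular scalar. Concretely, using property $(2)$ of the Choquet boundary I would, for $x\in\cha$ and a neighborhood $O$ of $x$, produce peaking functions $u\in A$ with $u(x)=1$ and $|u|$ small off $O$; the image $\del(u)$ should, after the analysis, again be forced to be (close to) a peaking function of $B$ supported near a single point $\phi(x)\in\chb$. The mechanism is the standard one: for a peaking function $u$ at $x$ one has $\V{u - \alpha u'} $ controlled by whether the peak sets overlap, for $\alpha\in\mathbb T$; translating these norm identities through $\del$ pins down both the peak point $\phi(x)$ of $\del(u)$ and, by testing against functions with prescribed value $\lambda$ at $x$, the scalar $\lambda$ attached to it. This yields a well-defined map $\phi\colon\cha\to\chb$ together with a unimodular weight, and symmetry of the argument applied to $\del^{-1}$ shows $\phi$ is a homeomorphism between the Choquet boundaries. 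One then defines $T$ on $A$ by $(Tf)(\phi(x)) = \lambda(x)\, h(x)\, \overline{(\text{something})}$ — more precisely, one shows there is a clopen decomposition $\cha = E_+ \cup E_-$ so that $\del(f)(\phi(x)) = f(x)$ on $E_+$ and $\del(f)(\phi(x)) = \overline{f(x)}$ on $E_-$ for all $f\in\SA$, using the fact that a real-linear isometry of $\mathbb C$ fixing nothing is either the identity or conjugation, glued continuously.

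Once this pointwise description of $\del$ on $\cha$ is available for all $f\in\SA$, I would define $T\colon A\to B$ by the same formula (it makes sense for every $f\in A$, not just unit-sphere elements, because $\cha$ is a norming boundary and the formula is positively homogeneous after rescaling), check it is real linear and isometric using that $\chb$ norms $B$, check surjectivity by running the same construction for $\del^{-1}$ and verifying the two maps are mutual inverses, and finally observe $T=\del$ on $\SA$ by construction. The verification that the formula defining $T$ actually lands in $B$ (closedness of $B$ under the operation $f\mapsto$ ``$f$ on $E_+$, $\bar f$ on $E_-$ transported by $\phi$'') is a genuine point that needs the clopenness of $E_\pm$ and a separation/Urysohn argument on $\chb$.

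The main obstacle I anticipate is precisely the extraction of the map $\phi$ and the scalar weight from $\del$ purely through metric identities on $\SA$: in the unital uniform-algebra case one leans heavily on the constant function $\mathbf 1$ and on peaking functions that attain $1$, whereas here there is no unit, peaking functions still vanish at infinity, and one must be careful that the ``test functions'' used to probe $\del(f)(y)$ can be taken inside $\SA$ with the required peaking behavior — this is exactly what property $(2)$ is designed to supply, but stitching the local information at each $x\in\cha$ into a globally continuous, bijective $\phi$ and a continuous weight, and then proving the rigid alternative (identity vs. conjugation) holds \emph{locally constantly}, is the technical heart of the argument.
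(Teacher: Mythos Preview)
Your outline follows essentially the same route as the paper's proof---extract a bijection $\phi\colon\cha\to\chb$ and a unimodular weight via the action of $\del$ on peaking functions, show the fibrewise map on $\mathbb T$ is either the identity or conjugation, and then define $T$ by a weighted composition formula---so the strategy is right. There are, however, two concrete issues.

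\textbf{The real gap.} Your mechanism for determining $\del(f)(\phi(x))$ works only when $|f(x)|=1$: peaking-function tests tell you what happens on the maximal face $\lambda V_x=\{f\in\SA:f(x)=\lambda\}$, and you correctly anticipate the dichotomy on $\mathbb T$. But you then assert ``once this pointwise description of $\del$ on $\cha$ is available for all $f\in\SA$'' without saying how to obtain it at points where $|f(x)|<1$. This is the technical heart of the argument and it is \emph{not} automatic. The paper handles it by an approximation lemma (Lemma~\ref{lem18}): for $|f(x_0)|<1$ and $0<r<1$ one constructs $g_r\in V_{x_0}$ so that $h_r=rf+(1-r|f(x_0)|)\lambda g_r\in\lambda V_{x_0}$, where $\lambda=f(x_0)/|f(x_0)|$. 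Since $h_r$ lies in the face, one knows $\del(h_r)(\phi(x_0))=\alpha_{x_0}(\lambda)$ exactly; then $\|h_r-f\|\le 2-r-r|f(x_0)|$ gives, after letting $r\to1$, the inequality $|\alpha_{x_0}(\lambda)-\del(f)(\phi(x_0))|\le 1-|f(x_0)|$, and the equality case of the triangle inequality forces $\del(f)(\phi(x_0))=\alpha_{x_0}(\lambda)\,|f(x_0)|$. Without something like this, your pointwise formula is only established on a set where $|f|=1$, which does not determine $\del(f)$.

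\textbf{A misplaced worry, and a missing weight.} You flag ``the verification that the formula defining $T$ actually lands in $B$'' and the clopenness of $E_\pm$ as genuine points. The paper sidesteps both: it defines $T$ as the positive homogeneous extension of $\del$, i.e.\ $T(g)=\|g\|\,\del(g/\|g\|)$, which lands in $B$ by construction; the weighted-composition formula is then used only to check that $T$ is isometric (since $\chb$ is a boundary) and hence real linear by Mazur--Ulam. Clopenness of $K_\pm$ is never proved or used. Finally, your stated formula $\del(f)(\phi(x))=f(x)$ on $E_+$ drops the unimodular weight you yourself introduced earlier; the correct identity is $\del(f)(\phi(x))=\alpha_x(1)\,f(x)$ (respectively $\alpha_x(1)\,\overline{f(x)}$), and this weight is essential---it is the $\kappa$ in the description of surjective real linear isometries recalled in the remark after Theorem~\ref{thm1}.
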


\begin{rem}
Let $T\colon A\to B$ be a surjective real linear isometry.
In \cite[Theorem 1.1]{miu1}, such an isometry $T$ was characterized as a weighted composition operator, more concretely, there exist a continuous function $\kappa : \chb \to \mathbb{T}= \{\lambda \in \mathbb{C} : |\lambda| = 1\}$, a (possibly empty) clopen subset $K$ of $\chb$, and a homeomorphism $\varphi: \chb \to \cha$ such that
$$T(f)(y) =
\begin{cases}
	\kappa(y) f(\varphi(y)), &    \hbox{ for } y\in K,\\[2mm]
	\kappa(y) \overline{f(\varphi(y))}, &    \hbox{ for } y\in \chb\backslash K,    \end{cases}%
$$ for all $f\in A$.
\end{rem}

\begin{nota} Under the previous assumptions, for each $f\in\SA$, we write $|f|^{-1}(1)$ for the set $\set{x\in X:|f(x)|=1}$, and we set
$$ M_f=|f|^{-1}(1)\cap\cha.$$

For $x\in\cha$, we denote by $P_x$ the set of all peaking functions $f$ for $A$
with $f(x)=1$. Define $\la P_x=\set{\la f:f\in P_x}$ for each $\la\in\T$.
In the same way, we define $Q_y$ the set of all peaking functions $u$
for $B$ with $u(y)=1$.
Here we note that
$$
M_f=\set{z\in\cha:|f(z)|=1}=\set{z\in\cha:f(z)=\la}
$$
for all $f\in\la P_x$ and $\la\in\T$.\smallskip

For each $\la\in\T$ and $x\in\cha$, we define
$$
\la V_x=\set{f\in\SA:f(x)=\la}.
$$
We see that $\la P_x\subset\la V_x$.
In the same way, we define $\mu W_y=\set{u\in\SB:u(y)=\mu}$
for each $\mu\in\T$ and $y\in\chb$.

\end{nota}

\begin{lem}\label{lem1}
Let $f,g\in\SA$ and $x_0\in M_f$. If $f(x_0)\neq g(x_0)$, then
there exists $h\in\SA$ such that $\Vinf{f-h}=2>\Vinf{g-h}$.
\end{lem}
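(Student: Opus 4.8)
The plan is to construct $h$ explicitly as a unimodular multiple of a peaking function concentrated near $x_0$, chosen so that its peak set is disjoint from the closed level set $\{x\in X: g(x)=f(x_0)\}$.

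Write $\la:=f(x_0)$. Since $x_0\in M_f\subseteq\cha$, we have $\la\in\T$, and the hypothesis $f(x_0)\neq g(x_0)$ reads precisely $g(x_0)\neq\la$. Hence the closed subset $G:=\{x\in X: g(x)=\la\}$ of $X$ does not contain $x_0$, so $O:=X\setminus G$ is an open neighbourhood of $x_0$ in $X$. As $x_0$ lies in the Choquet boundary, property $(2)$ of $\cha$ recalled above, applied with $\e=1/2$, yields a peaking function $u\in A$ with $u(x_0)=1$ and $|u|<1/2$ on $X\setminus O=G$. I then set $h:=-\la u$; this belongs to $\SA$ because $A$ is a complex subalgebra of $C_0(X)$ and $\Vinf{h}=\Vinf{u}=1$.

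The equality $\Vinf{f-h}=2$ is immediate, since $|f(x_0)-h(x_0)|=|\la+\la\,u(x_0)|=2$ while $\Vinf{f-h}\le\Vinf{f}+\Vinf{h}=2$. For the strict inequality $\Vinf{g-h}<2$ I would view $g-h=g+\la u$ as an element of $C(X\cup\{\infty\})$ vanishing at $\infty$, so that $\Vinf{g-h}$ is attained at some $p\in X\cup\{\infty\}$; if $p=\infty$ then $\Vinf{g-h}=0<2$. If $p\in X$, then $\Vinf{g-h}=|g(p)+\la u(p)|\le|g(p)|+|u(p)|\le 2$, and were this equal to $2$, equality throughout would force $|g(p)|=|u(p)|=1$ together with $g(p)=\la\,u(p)$; since $u$ is a peaking function, $|u(p)|=1$ gives $u(p)=1$, hence $g(p)=\la$, i.e.\ $p\in G$, contradicting $|u|<1/2$ on $G$. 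Therefore $\Vinf{g-h}<2$, which completes the argument.

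I do not expect a genuine obstacle: the one real idea is to separate $x_0$ from the closed set $\{g=f(x_0)\}$ by a peaking function, for which the Urysohn-type property $(2)$ of the Choquet boundary is tailor-made, and the rest is a triangle-inequality estimate combined with the defining property $\rpi(u)=\{1\}$ of a peaking function. The only technical point meriting attention is the passage to the one-point compactification, which is what upgrades the pointwise bound $|g(x)+\la u(x)|<2$ to the strict norm bound $\Vinf{g-h}<2$.
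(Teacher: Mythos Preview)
Your proof is correct and follows essentially the same route as the paper: both set $h=-f(x_0)\,u$ for a peaking function $u\in P_{x_0}$ that is small off a suitable open neighbourhood of $x_0$, and both reach $\Vinf{g-h}<2$ by showing that $|g(x)-h(x)|=2$ at a point would force $u(x)=1$ and $g(x)=f(x_0)$, a contradiction. Your choice $O=X\setminus\{g=f(x_0)\}$ is a bit slicker than the paper's $O=\{\,|g-g(x_0)|<\tfrac12|f(x_0)-g(x_0)|\,\}$, and you make explicit the one-point-compactification step that upgrades the pointwise bound to a strict supremum bound, a passage the paper leaves implicit.
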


\begin{proof}
Note first that $|f(x_0)|=1$, since $x_0\in M_f$.
Set $2\d=|f(x_0)-g(x_0)|$, and then $\d>0$.
Define the open neighborhood $O$ of $x_0$
by $O=\set{x\in X:|g(x)-g(x_0)|<\d}$. Since $x_0\in M_f\subset\cha$,
there exists $u\in P_{x_0}$ such that $|u|<2^{-1}$ on $X\setminus O$.
We set $h=-f(x_0)u\in\SA$. We have
$$
2=|2f(x_0)|=|f(x_0)-h(x_0)|\leq\Vinf{f-h}\leq2,
$$
and thus $\Vinf{f-h}=2$.\smallskip

Take an arbitrary $x\in X$. We shall prove that $|g(x)-h(x)|<2$.
If $x\in O$, then we observe that $|g(x)-h(x)|<2$. Indeed, if $|g(x)-h(x)|=2$, then $g(x)=-h(x)$ and $|h(x)|=1$, since $g,h\in\SA$.
This implies that $|u(x)|=|h(x)|=1$.
Since $u$ is a peaking function for $A$, we obtain $u(x)=1$, and hence
$g(x)=-h(x)=f(x_0)$. Since $x\in O$, we get
$2\d=|f(x_0)-g(x_0)|=|g(x)-g(x_0)|<\d$, a contradiction.
We have proved that $|g(x)-h(x)|<2$ for all $x\in O$.
Suppose now that $x\in X\setminus O$. Then $|u(x)|<2^{-1}$.
It follows that
$$
|g(x)-h(x)|\leq|g(x)|+|f(x_0)u(x)|\leq1+\fr{1}{2}<2.
$$
Hence, $|g(x)-h(x)|<2$, and consequently, $\Vinf{g-h}<2$.
\end{proof}

In the rest of this section, we assume that $A$ and $B$ are uniformly closed function algebras on locally compact Hausdorff spaces $X$ and $Y$, respectively, and that
$\del\colon\SA\to\SB$ is a surjective isometry with respect to the supremum norms.

\begin{lem}\label{lem2}
Let $f,g\in\SA$. If $f=g$ on $M_f$, then $\del(f)=\del(g)$ on $M_{\del(f)}$.
\end{lem}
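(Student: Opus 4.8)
I want to show that if $f=g$ on $M_f$, then $\del(f)=\del(g)$ on $M_{\del(f)}$. The natural strategy is to exploit Lemma \ref{lem1} together with the fact that $\del$ is a surjective isometry, hence preserves distances between spheres. First I would record the contrapositive reformulation of Lemma \ref{lem1}: if $f,g\in\SA$ and $x_0\in M_f$ satisfies $f(x_0)\neq g(x_0)$, then there is $h\in\SA$ with $\Vinf{f-h}=2>\Vinf{g-h}$; equivalently, if \emph{no} such $h$ exists (i.e.\ $\Vinf{f-h}=2$ forces $\Vinf{g-h}=2$), then $f=g$ on all of $M_f$. Actually what I really want is a ``local'' version pinned to a single point of $M_{\del(f)}$, so let me instead argue pointwise as follows.

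Fix $y_0\in M_{\del(f)}$ and suppose, for contradiction, that $\del(f)(y_0)\neq\del(g)(y_0)$. Apply Lemma \ref{lem1} with the roles played by $\del(f),\del(g)\in\SB$ and the point $y_0\in M_{\del(f)}$ (the lemma holds for $B$ on $Y$ exactly as stated for $A$ on $X$): there exists $v\in\SB$ with $\Vinf{\del(f)-v}=2>\Vinf{\del(g)-v}$. Since $\del$ is a surjective isometry, write $v=\del(h)$ for some $h\in\SA$, and then $\Vinf{f-h}=\Vinf{\del(f)-\del(h)}=2$ while $\Vinf{g-h}=\Vinf{\del(g)-\del(h)}<2$. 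So we have produced $h\in\SA$ with $\Vinf{f-h}=2>\Vinf{g-h}$. The point now is that this contradicts the hypothesis $f=g$ on $M_f$: if $f=g$ on $M_f$ and $\Vinf{f-h}=2$, then — using that $\cha$ is a norming set and property $(1)$ of the Choquet boundary — there is $x_1\in\cha$ with $|f(x_1)-h(x_1)|=2$, forcing $|f(x_1)|=|h(x_1)|=1$ and $h(x_1)=-f(x_1)$; in particular $x_1\in M_f$, so $g(x_1)=f(x_1)=-h(x_1)$ gives $|g(x_1)-h(x_1)|=2$, hence $\Vinf{g-h}=2$, a contradiction. Therefore $\del(f)(y_0)=\del(g)(y_0)$, and since $y_0\in M_{\del(f)}$ was arbitrary we conclude $\del(f)=\del(g)$ on $M_{\del(f)}$.

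**Main obstacle.** The only delicate point is the claim that $\Vinf{f-h}=2$ produces a witnessing point \emph{in the Choquet boundary} $\cha$ rather than merely in $X$; this is exactly property $(1)$ on page \pageref{properties of the Choquet boundary} applied to $f-h$ (note $f-h\in A$, though not necessarily in $\SA$), giving $x_1\in\cha$ with $|f(x_1)-h(x_1)|=\Vinf{f-h}=2$. From $|f(x_1)-h(x_1)|=2$ and $\Vinf{f},\Vinf{h}\le 1$ one gets $|f(x_1)|=|h(x_1)|=1$ and $h(x_1)=-f(x_1)$ by the equality case of the triangle inequality, whence $x_1\in|f|^{-1}(1)\cap\cha=M_f$. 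The rest is a direct substitution using the hypothesis $f=g$ on $M_f$. I expect no further complications: everything reduces to Lemma \ref{lem1} (transported to $B$), surjectivity and isometry of $\del$, and the norming property of $\cha$.
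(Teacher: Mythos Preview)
Your proof is correct and follows essentially the same approach as the paper's: argue by contradiction at a point $y_0\in M_{\del(f)}$, apply Lemma~\ref{lem1} in $B$, pull back via surjectivity and isometry of $\del$ to get $h\in\SA$ with $\Vinf{f-h}=2>\Vinf{g-h}$, then use that $\cha$ is norming to locate a witness $x_1\in M_f$ and derive a contradiction. The only cosmetic difference is that the paper concludes by noting $f(x_0)\neq g(x_0)$ at a point of $M_f$ (contradicting the hypothesis), whereas you use the hypothesis to compute $|g(x_1)-h(x_1)|=2$ (contradicting $\Vinf{g-h}<2$); these are equivalent.
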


\begin{proof} Arguing by contradiction we suppose the existence of $y_0\in M_{\del(f)}$ such that $\del(f)(y_0)\neq\del(g)(y_0)$.
Applying Lemma~\ref{lem1} to $\del(f),\del(g)\in\SB$ and $y_0\in M_{\del(f)}$,
we can choose $h\in\SA$ so that $\Vinf{\del(f)-\del(h)}=2>\Vinf{\del(g)-\del(h)}$,
here we have used that $\del$ is surjective.
Since $\del$ is an isometry, we have
$\Vinf{f-h}=2>\Vinf{g-h}$.
Recall that $\cha$ is a boundary for $A$, and thus there exists
$x_0\in\cha$ with $|f(x_0)-h(x_0)|=2$, and by the other condition $|g(x_0)-h(x_0)|<2$.
Since $f,h\in\SA$, we get $|f(x_0)|=1$, which implies
$x_0\in M_f$.
Consequently, $f(x_0)\neq g(x_0)$ for $x_0\in M_f,$ which is impossible.
\end{proof}

\begin{lem}\label{lem3}
Let $x\in\cha$, $\la\in\T$ and $n\in\N$.
If $f_j\in\la P_x$ for each $j\in\N$ with $1\leq j\leq n$,
then $g=n^{-1}\sum_{j=1}^nf_j\in A$ satisfies $g\in\la P_x$ with
$M_g\subset\cap_{j=1}^nM_{f_j}$.
\end{lem}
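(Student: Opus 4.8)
The plan is to verify directly that $g$ is a unimodular scalar multiple of a peaking function peaking at $x$, and then to read off the inclusion of the associated sets $M_\bullet$ from the very same computation.

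First I would observe that $g\in A$, since $A$, being a subalgebra of $C_0(X)$, is in particular a linear subspace. Evaluating at $x$ gives $g(x)=n^{-1}\sum_{j=1}^{n}f_j(x)=n^{-1}\sum_{j=1}^{n}\la=\la$, so $\Vinf{g}\geq|g(x)|=1$. On the other hand, the triangle inequality yields $\Vinf{g}\leq n^{-1}\sum_{j=1}^{n}\Vinf{f_j}=1$, because each $f_j\in\la P_x$ has norm one. Hence $\Vinf{g}=1$ and $g$ attains its norm at $x$ with value $\la$.

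The decisive step concerns the behaviour of $g$ on the set where $|g|=1$. Fix $z\in X$ with $|g(z)|=1$. Then $g(z)$ is the arithmetic mean of the points $f_1(z),\dots,f_n(z)$, each of which lies in the closed unit disk of $\C$; since this disk is strictly convex and a convex combination (with strictly positive weights) of its points has modulus one, all the $f_j(z)$ must coincide and equal $g(z)$, and in particular $|f_j(z)|=1$ for every $j$. Now $f_j\in\la P_x$ means $\ov{\la}\,f_j\in P_x$, so $\ov{\la}\,f_j$ is a peaking function for $A$; from $|\ov{\la}\,f_j(z)|=1$ we conclude $\ov{\la}\,f_j(z)=1$, i.e.\ $f_j(z)=\la$. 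Therefore $g(z)=\la$. This shows at once that $\ov{\la}\,g$ is a peaking function for $A$ (every value of modulus one equals $1$, and $\Vinf{\ov{\la}\,g}=1$) and, together with $\ov{\la}\,g(x)=1$, that $\ov{\la}\,g\in P_x$; equivalently $g\in\la P_x$.

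Finally, for the inclusion $M_g\subseteq\bigcap_{j=1}^{n}M_{f_j}$, I would take $z\in M_g$, so that $z\in\cha$ and $|g(z)|=1$; the computation of the previous paragraph then gives $f_j(z)=\la$ for every $j$, hence $z\in M_{f_j}$ for every $j$, as required. (The reverse inclusion holds trivially as well, but it is not needed.) I do not expect a genuine obstacle in this lemma; the only point requiring a little care is the correct use of the strict convexity of $\C$ --- recalling that equality throughout the chain $1=|g(z)|\leq n^{-1}\sum_{j=1}^{n}|f_j(z)|\leq1$ forces each $f_j(z)$ to be unimodular and all of them to be equal --- while everything else is bookkeeping with the definitions of $P_x$, $\la P_x$ and $M_f$.
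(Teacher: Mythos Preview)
Your proof is correct and follows essentially the same route as the paper's: both compute $\Vinf{g}=1=g(x)\ov{\la}$, then argue that $|g(z)|=1$ forces $|f_j(z)|=1$ for every $j$, whence the peaking property of $\ov{\la}f_j$ gives $f_j(z)=\la$, yielding simultaneously $g\in\la P_x$ and $M_g\subset\bigcap_j M_{f_j}$. The only cosmetic difference is that you phrase the key step via strict convexity of the disk (concluding all $f_j(z)$ coincide), whereas the paper uses the bare triangle inequality to get $|f_j(z)|=1$ directly; either way the peaking condition finishes the job.
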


\begin{proof}
Since $f_j\in\la P_x$ for $j=1,2,\ldots,n$,
then $f_j(x)=\la$ and $\Vinf{f_j}=1$ for all $j\in\set{1,2,\ldots,n}$. We have
$$
n=|n\la|=\left|\sum_{j=1}^nf_j(x)\right|
\leq\sum_{j=1}^n|f_j(x)|\leq\sum_{j=1}^n\Vinf{f_j}=n.
$$
Hence, $g=n^{-1}\sum_{j=1}^nf_j\in A$ satisfies $\ov{\la}g(x)=1=\Vinf{g}$.

We shall prove that $g\in\la P_x$.
Suppose that $|\ov{\la}g(x')|=1$ for $x'\in X$, and
then $|\sum_{j=1}^nf_j(x')|=n$. Since $|f_j(x')|\leq 1$, it follows that $|\ov{\la} f_j(x')|=|f_j(x')|=1$ for all $j\in\set{1,2,\ldots,n}$, which implies that $\ov{\la}f_j(x')=1$ because $\ov{\la}f_j\in P_x$,
and thus $\ov{\la}g(x')=1$. This shows that $\ov{\la}g\in P_x$ (consequently, $g\in\la P_x$) and $M_g\subset\cap_{j=1}^nM_{f_j}$. 

\end{proof}

The characterization of compactness in terms of the finite intersection property is employed in our next result.

\begin{lem}\label{lem4}
The intersection $\displaystyle \bigcap_{u\in\del(\la P_x)}|u|^{-1}(1)$ is non-empty for all $\la$ in $\T$ and  $x$ in $\cha$.
\end{lem}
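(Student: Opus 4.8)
The plan is to deduce the statement from the finite intersection property characterisation of compactness, the essential inputs being Lemmas~\ref{lem2} and~\ref{lem3}. First I would record the easy preliminaries. The set $\la P_x$ is non-empty: applying property~$(2)$ of the Choquet boundary recalled above with the open set $O=X$, one obtains a peaking function $u\in A$ with $u(x)=1$, so $\la u\in\la P_x$, and hence $\del(\la P_x)$ is a non-empty subset of $\SB$. For every $u\in\del(\la P_x)\subseteq\SB\subseteq C_0(Y)$ the set $|u|^{-1}(1)=\set{y\in Y:|u(y)|\geq 1}$ is compact (since $u$ vanishes at infinity) and non-empty (since $\chb$ is a boundary for $B$ and $\Vinf{u}=1$). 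Fixing one $u_0\in\del(\la P_x)$, the sets $|u_0|^{-1}(1)\cap|u|^{-1}(1)$, for $u\in\del(\la P_x)$, form a family of closed subsets of the compact space $|u_0|^{-1}(1)$ whose total intersection is exactly $\bigcap_{u\in\del(\la P_x)}|u|^{-1}(1)$. So it suffices to check that this family has the finite intersection property, i.e. that $\bigcap_{j=1}^{n}|\del(f_j)|^{-1}(1)\neq\emptyset$ for arbitrary $f_1,\dots,f_n\in\la P_x$.

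To establish this last point I would put $g=n^{-1}\sum_{j=1}^{n}f_j$. By Lemma~\ref{lem3}, $g\in\la P_x$ and $M_g\subseteq\bigcap_{j=1}^{n}M_{f_j}$. Since $g\in\la P_x$, we have $g\equiv\la$ on $M_g$; since moreover $M_g\subseteq M_{f_j}$ and $f_j\equiv\la$ on $M_{f_j}$ (both facts from the description of $M_{(\cdot)}$ for elements of $\la P_x$ recorded in the Notation), we also have $f_j\equiv\la$ on $M_g$. Hence $g=f_j$ on $M_g$, and Lemma~\ref{lem2} applied to the pair $(g,f_j)$ yields $\del(g)=\del(f_j)$ on $M_{\del(g)}$. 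Because $\chb$ is a boundary for $B$ and $\Vinf{\del(g)}=1$, the set $M_{\del(g)}$ is non-empty, and for any $y_0\in M_{\del(g)}$ and any $j\in\set{1,\dots,n}$ we get $|\del(f_j)(y_0)|=|\del(g)(y_0)|=1$. Thus $\emptyset\neq M_{\del(g)}\subseteq\bigcap_{j=1}^{n}|\del(f_j)|^{-1}(1)$, which is precisely what was needed, and the finite intersection property argument of the first paragraph then closes the proof.

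The one delicate point is the direction in which Lemma~\ref{lem2} is invoked: its hypothesis is available for the pair $(g,f_j)$ — that is, $g=f_j$ on $M_g$ — precisely because passing to the average $g$ \emph{shrinks} the relevant boundary set, $M_g\subseteq M_{f_j}$, which is the content of Lemma~\ref{lem3}; the reverse hypothesis $f_j=g$ on $M_{f_j}$ would generally fail. I expect this (small) reorientation, together with the bookkeeping of the finite intersection property, to be the only genuinely substantive step; the compactness and non-emptiness of $|u|^{-1}(1)$ for $u\in\SB$ are routine consequences of $u$ vanishing at infinity and of $\chb$ being a boundary.
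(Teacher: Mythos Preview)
Your proof is correct and follows essentially the same route as the paper's: reduce to the finite intersection property, average to form $g=n^{-1}\sum f_j\in\la P_x$ via Lemma~\ref{lem3}, observe $g=f_j$ on $M_g$, and apply Lemma~\ref{lem2} to conclude $M_{\del(g)}\subset\bigcap_j|\del(f_j)|^{-1}(1)$. The paper additionally verifies the equality $M_g=\bigcap_j M_{f_j}$, but (as you implicitly notice) only the inclusion $M_g\subset\bigcap_j M_{f_j}$ from Lemma~\ref{lem3} is actually used, so your streamlined version loses nothing.
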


\begin{proof}
First, we note that $|u|^{-1}(1)$ is a compact subset of $Y$ for each
$u\in\del(\la P_x)$. So, by the characterization of compactness in terms of the finite intersection property, it is enough to show that
$\cap_{j=1}^n|u_j|^{-1}(1)\neq\emptyset$ for each $n\in\N$ and
$u_j\in\del(\la P_x)$ with $j=1,2,\ldots,n$.\smallskip

Let $n\in\N$ and $u_j\in\del(\la P_x)$ for $j=1,2,\ldots,n$.
Choose $f_j\in\la P_x$ so that $u_j=\del(f_j)$, and set
$g=n^{-1}\sum_{j=1}^nf_j\in A$.
We see that $g\in\la P_x$ with $M_g\subset\cap_{j=1}^nM_{f_j}$
by Lemma~\ref{lem3}.
We shall prove that $M_g=\cap_{j=1}^nM_{f_j}$.
Here, we recall that
\begin{equation}\label{lem4.1}
M_f=\set{z\in\cha:|f(z)|=1}=\set{z\in\cha:f(z)=\la}
\end{equation}
for all $f\in\la P_x$.
Let $x_0\in\cap_{j=1}^nM_{f_j}$.
Since $f_j\in\la P_x$, we have $f_j(x_0)=\la$ for all $j\in\set{1,2,\ldots,n}$
by \eqref{lem4.1}.
It follows that $g(x_0)=n^{-1}\sum_{j=1}^nf_j(x_0)=\la$.
Since $g\in\la P_x$, equality \eqref{lem4.1} shows that $x_0\in M_g$,
and consequently, $\cap_{j=1}^nM_{f_j}\subset M_g$.
Therefore, we conclude that $M_g=\cap_{j=1}^nM_{f_j}$, as claimed.\smallskip

For each $z\in M_g=\cap_{j=1}^nM_{f_j}$,
we have $g(z)=\la=f_j(z)$, that is, $g=f_j$ on $M_g$ for each $j=1,2,\ldots,n$.
If we apply Lemma~\ref{lem2}, we deduce that $\del(g)=\del(f_j)=u_j$ on $M_{\del(g)}$.
Then $|u_j(\zeta)|=|\del(g)(\zeta)|=1$ for each $\zeta\in M_{\del(g)}$, and consequently
$\cap_{j=1}^n|u_j|^{-1}(1)\neq\emptyset$, as claimed.
\end{proof}

We explore next the intersection of the non-empty set in the previous lemma with the Choquet boundary of $B$.

\begin{lem}\label{lem5}
The intersection $\displaystyle \chb\cap\left(\bigcap_{u\in\del(\la P_x)}|u|^{-1}(1)\right)$ is non-empty
for each $\la\in\T$ and each $x\in\cha$.
\end{lem}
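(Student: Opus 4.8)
The plan is to refine the finite-intersection argument of Lemma \ref{lem4} so that the point we produce at the end lies in $\chb$. Recall that in the proof of Lemma \ref{lem4} we obtained, for any finite family $u_1,\dots,u_n \in \del(\la P_x)$, a function $g \in \la P_x$ with $\del(g)$ agreeing with each $u_j$ on $M_{\del(g)}$; in particular $M_{\del(g)} \subseteq \bigcap_{j=1}^n |u_j|^{-1}(1)$. The key extra observation is that $M_{\del(g)} = |\del(g)|^{-1}(1) \cap \chb$ is itself non-empty: since $\del(g) \in \SB$ and $\chb$ is a boundary for $B$ (property (1) of the Choquet boundary), there exists $\zeta \in \chb$ with $|\del(g)(\zeta)| = \Vinf{\del(g)} = 1$, i.e. $\zeta \in M_{\del(g)}$. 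Hence $\chb \cap \bigcap_{j=1}^n |u_j|^{-1}(1) \supseteq M_{\del(g)} \neq \emptyset$ for every finite subfamily.

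Next I would upgrade this to the full intersection via compactness, exactly as in Lemma \ref{lem4}, but now working inside the closed set $\chb$ relative to the one-point compactification. The sets $\chb \cap |u|^{-1}(1)$ are relatively closed subsets of the compact set $|u|^{-1}(1) \subseteq Y$ for each $u \in \del(\la P_x)$; alternatively one takes closures $\overline{\chb} \cap |u|^{-1}(1)$ in $Y$, which are genuinely compact. The finite intersection property established in the previous paragraph shows the family $\{\,\overline{\chb} \cap |u|^{-1}(1) : u \in \del(\la P_x)\,\}$ of compact sets has non-empty total intersection. A point $y_0$ in this intersection satisfies $|u(y_0)| = 1$ for all $u \in \del(\la P_x)$, so it remains only to check that $y_0$ can be taken in $\chb$ itself rather than merely in its closure.

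The delicate point — and the step I expect to be the main obstacle — is precisely promoting a limit point of $\chb$ to an actual element of $\chb$. Here one should not pass to closures but argue more carefully: run the finite-intersection argument over the directed family of finite subsets $F \subseteq \del(\la P_x)$, and for each $F$ let $g_F \in \la P_x$ be the averaged function supplied by Lemma \ref{lem3}, so that $M_{g_F} \subseteq \bigcap_{u\in F} M_u$ (using Lemmas \ref{lem2} and \ref{lem3} as in Lemma \ref{lem4}), and $\emptyset \neq M_{\del(g_F)} \subseteq \chb \cap \bigcap_{u\in F}|u|^{-1}(1)$. The sets $M_{\del(g_F)}$ form a decreasing net of non-empty sets each contained in $\chb$; I would show each $M_{\del(g_F)}$ is \emph{weak peak}-closed, hence closed in $Y$ and in fact a compact subset of $Y$ contained in $\chb$ (it equals $\{y : \del(g_F)(y) = 1\} \cap \chb$, and $\{y : \del(g_F)(y)=1\}$ is compact since $\del(g_F) \in C_0(Y)$ with norm $1$). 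A decreasing net of non-empty compact sets has non-empty intersection, and that intersection lies in $\bigcap_F M_{\del(g_F)} \subseteq \chb \cap \bigcap_{u \in \del(\la P_x)} |u|^{-1}(1)$, which is exactly the assertion. The one thing to verify with care is that $M_{\del(g_F)}$ is genuinely non-empty for every finite $F$ — but that is the boundary property (1) applied to $\del(g_F) \in \SB$, as noted above — and that the net is genuinely decreasing, which follows because $F \subseteq F'$ implies $g_{F'}$ "dominates" the data of $g_F$ on the relevant peak sets through Lemma \ref{lem3}.
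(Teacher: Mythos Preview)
Your approach has a genuine gap at exactly the point you flag as ``delicate''. The sets $M_{\del(g_F)}=|\del(g_F)|^{-1}(1)\cap\chb$ need not be closed in $Y$, because $\chb$ is typically not closed. Your attempted justification, that $M_{\del(g_F)}=\{y:\del(g_F)(y)=1\}\cap\chb$, is incorrect: there is no reason for $\del(g_F)$ to be a peaking function for $B$, so $|\del(g_F)(y)|=1$ does not force $\del(g_F)(y)$ to equal a single value. Consequently you cannot pass from the (correct) finite-intersection statement $\emptyset\neq M_{\del(g_F)}\subset\chb\cap\bigcap_{u\in F}|u|^{-1}(1)$ to a non-empty total intersection: compactness fails, and taking closures throws you back to the problem of landing outside $\chb$ that you already identified. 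The monotonicity $M_{\del(g_{F'})}\subset M_{\del(g_F)}$ for $F\subset F'$ \emph{is} correct (via Lemma~\ref{lem2}), but a decreasing net of non-empty sets without compactness can well have empty intersection.

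The paper sidesteps this completely and proceeds differently. It first takes a point $y_0$ in the full intersection $\bigcap_{u\in\del(\la P_x)}|u|^{-1}(1)$ (already supplied by Lemma~\ref{lem4}), and then for each $u\in\del(\la P_x)$ manufactures a genuine peaking function $\tilde{u}\in Q_{y_0}$ in $B$ by the algebraic trick $\tilde{u}=(\ov{u(y_0)}^{\,2}u^2+\ov{u(y_0)}u)/2$, checking that $(\tilde{u})^{-1}(1)=u^{-1}(u(y_0))\subset|u|^{-1}(1)$. Thus $\bigcap_{u}(\tilde{u})^{-1}(1)$ is a non-empty weak peak set for $B$, and one invokes the structural fact (e.g.\ \cite[Proposition~2.1]{miu1}) that every non-empty weak peak set contains a point of $\chb$. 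It is this last fact --- which does not appear anywhere in your argument --- that supplies the bridge from the topological intersection to the Choquet boundary.
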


\begin{proof}
Let $\la\in\T$ and $x\in\cha$.
There exists $y_0\in\cap_{u\in\del(\la P_x)}|u|^{-1}(1)$
by Lemma~\ref{lem4}.
Take an arbitrary $u\in\del(\la P_x)$ (in particular, $|u(y_0)|=1=\Vinf{u}$). Define the function $\tilde{u}\in B$ by $$\tilde{u}(y)=\left(\ov{u(y_0)}^2u^2(y)+\ov{u(y_0)}u(y)\right)/2, \  (y\in Y).$$
We observe that $\tilde{u}\in Q_{y_0}$. Namely, $1=\tilde{u}(y_0)\leq\Vinf{\tilde{u}}\leq 1$, and thus $\tilde{u}\in W_{y_0}$.
Suppose now that $|\tilde{u}(y)|=1$ for some $y\in Y$,
and then $|\ov{u(y_0)}u^2(y)+u(y)|=2$.
It follows that
$$
2\leq|\ov{u(y_0)}u^2(y)|+|u(y)|\leq 2,
$$
which shows that $|u(y)|=1$.
Hence $|\ov{u(y_0)}u(y)+1|=2$, and consequently $\ov{u(y_0)}u(y)=1$.
This implies that $\tilde{u}(y)=1$, and we have  therefore proven that $\tilde{u}\in Q_{y_0}$.
We see that $(\tilde{u})^{-1}(1)$ is a peak set for $B$ with
$$(\tilde{u})^{-1}(1)=u^{-1}(u(y_0))\subset|u|^{-1}(1).$$
By the arbitrariness of $u\in\del(\la P_x)$,
we get $y_0\in\cap_{u\in\del(\la P_x)}(\tilde{u})^{-1}(1)$.
It is known that every non-empty weak peak set for $B$ contains a weak peak point,
that is, $\chb\cap\left(\cap_{u\in\del(\la P_x)}(\tilde{u})^{-1}(1)\right)\neq\emptyset$
(see, for example, \cite[Proposition~2.1]{miu1}).
This shows that $$\chb\cap\left(\cap_{u\in\del(\la P_x)}|u|^{-1}(1)\right)\neq\emptyset.$$
\end{proof}

In the next result we replace $\la P_x$ with $\la V_x$.

\begin{lem}\label{lem6}
The intersection $\displaystyle \chb\cap\left(\bigcap_{v\in\del(\la V_x)}|v|^{-1}(1)\right)$ is non-empty
for each $\la\in\T$ and each $x\in\cha$.
\end{lem}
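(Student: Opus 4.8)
The plan is to reduce the statement about $\la V_x$ to the already-established Lemma~\ref{lem5} about $\la P_x$. The key observation is that $\la P_x \subseteq \la V_x$, so a priori the intersection over $\del(\la V_x)$ is \emph{smaller} than the one over $\del(\la P_x)$, and we cannot simply invoke Lemma~\ref{lem5} directly. Instead, I would fix a point $y_0 \in \chb \cap \left(\bigcap_{u\in\del(\la P_x)}|u|^{-1}(1)\right)$, which exists by Lemma~\ref{lem5}, and then show that this \emph{same} $y_0$ lies in $|v|^{-1}(1)$ for every $v \in \del(\la V_x)$. Once that is done we are finished, since $y_0\in\chb$ already.

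So fix $v \in \del(\la V_x)$, say $v = \del(f)$ with $f \in \la V_x$, i.e. $f\in\SA$ with $f(x) = \la$. The goal is $|v(y_0)| = 1$. The natural route is to produce, for the given $f$, a peaking function $p \in \la P_x$ such that $f$ and $p$ agree on $M_p$ (or at least such that the hypotheses of Lemma~\ref{lem2} are met for the pair $(p,f)$); applying Lemma~\ref{lem2} then gives $\del(p) = \del(f) = v$ on $M_{\del(p)}$, and since $y_0 \in M_{\del(p)}$ (as $\del(p)\in\del(\la P_x)$ and $y_0$ lies in the intersection over that set, and $y_0\in\chb$), we get $|v(y_0)| = |\del(p)(y_0)| = 1$, as desired. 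To build such a $p$: using property~(2) of the Choquet boundary (Urysohn-type lemma) at the point $x$ and a small neighborhood $O$ of $x$ on which $f$ is close to $\la$, choose a peaking function $u_0 \in P_x$ with $|u_0| < \e$ off $O$; then a suitable algebra combination such as $p = \ov{\la}\,(\text{something built from } u_0 \text{ and } f)$, normalized, should land in $\la P_x$ while forcing $M_p$ to sit inside the set where $f$ takes the value $\la$ --- exactly the ingredient needed to apply Lemma~\ref{lem2}.

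The main obstacle is the construction of the auxiliary peaking function $p$: one must ensure simultaneously that $p$ is genuinely peaking (so $\rpi(p)=\{1\}$, using that peaking functions are stable under the algebra operations, e.g. via the $(\cdot)^2 + (\cdot)$ trick already used in the proof of Lemma~\ref{lem5}), that $p(x)=\la$, and that $M_p \subseteq |f|^{-1}(\la)\cap\cha$ so that $f = p$ on $M_p$. Controlling $M_p$ is the delicate point: shrinking the neighborhood $O$ makes $|u_0|$ small off $O$, which pushes $M_p$ into $O$, and on $O$ one has $f\approx \la$; combining these to pin $f$ exactly to $\la$ on $M_p$ (rather than merely approximately) requires taking an intersection/limit over a neighborhood basis at $x$, or exploiting that $M_p$ is determined by where a peaking function equals $1$, which is a closed condition. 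I would organize this as: (i) for each neighborhood $O$ of $x$ produce $p_O \in \la P_x$ with $M_{p_O}$ close to $x$; (ii) apply Lemma~\ref{lem2} to each pair $(p_O, f)$ to get $v = \del(p_O)$ on $M_{\del(p_O)}$; (iii) note $y_0 \in M_{\del(p_O)}$ for all such $O$ since $\del(p_O)\in\del(\la P_x)$; (iv) conclude $|v(y_0)|=1$. After $v$ was arbitrary in $\del(\la V_x)$, intersect to finish.
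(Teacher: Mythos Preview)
Your overall strategy is correct and matches the paper's: fix $y_0$ from Lemma~\ref{lem5}, and for each $v=\del(f)$ with $f\in\la V_x$ produce some $p\in\la P_x$ with $p=f$ on $M_p$; then Lemma~\ref{lem2} gives $\del(p)=v$ on $M_{\del(p)}\ni y_0$, hence $|v(y_0)|=1$.

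The gap is in your construction of $p$. Step~(ii) of your plan requires $p_O=f$ on $M_{p_O}$ in order to invoke Lemma~\ref{lem2}, but your Urysohn-type $p_O$ only satisfies $p_O=\la$ on $M_{p_O}\subset O$, while on $O$ you have merely $f\approx\la$, not $f=\la$. No amount of shrinking $O$ or passing to a limit over a neighbourhood basis repairs this: Lemma~\ref{lem2} (via Lemma~\ref{lem1}) needs exact agreement, and there is no approximate version available here. The ``closed condition'' remark does not help either, since $M_{p_O}$ being contained in ever-smaller neighbourhoods of $x$ still does not force $f=\la$ on $M_{p_O}$ unless $M_{p_O}=\{x\}$, which you cannot arrange in general.

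The paper resolves this in one stroke by applying the $(\cdot)^2+(\cdot)$ trick \emph{directly to $\ov{\la}f$} rather than to an auxiliary Urysohn function. Set
\[
\tilde{f}(z)=\tfrac{1}{2}\bigl(\ov{\la}^{\,2}f^2(z)+\ov{\la}f(z)\bigr).
\]
Then $\tilde{f}(x)=1$, $\Vinf{\tilde{f}}\leq 1$, and $|\tilde{f}(z)|=1$ forces $\ov{\la}f(z)=1$; hence $\tilde{f}\in P_x$ with $M_{\tilde{f}}=f^{-1}(\la)\cap\cha$ \emph{exactly}. Thus $\la\tilde{f}\in\la P_x$ and $\la\tilde{f}=\la=f$ on $M_{\la\tilde{f}}$, so Lemma~\ref{lem2} applies with no approximation needed. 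The point you missed is that building $p$ out of $f$ itself (rather than out of an unrelated Urysohn peaking function) automatically forces $M_p$ to sit inside $f^{-1}(\la)$; no neighbourhoods or limits are required.
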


\begin{proof} By Lemma~\ref{lem5}, there exists
$y\in\chb\cap\left(\bigcap_{u\in\del(\la P_x)}|u|^{-1}(1)\right)$.
Take an arbitrary $v\in\del(\la V_x)$.
We shall prove that $|v(y)|=1$.
Let $f\in\la V_x$ be such that $\del(f)=v$, and then $f(x)=\la$ and $\Vinf{f}=1$.
Define the function $\tilde{f}\in\SA$ by $$\tilde{f}(z)=(\ov{\la}^2f^2(z)+\ov{\la}f(z))/2, \ (z\in X).$$ We see that $\tilde{f}\in P_x$ with
$$
M_{\tilde{f}}=\set{z\in\cha:|\tilde{f}(z)|=1}=\set{z\in\cha:f(z)=\la}.
$$
Recall that $M_{\tilde{f}}=\set{z\in\cha:\tilde{f}(z)=1}$, since $\tilde{f}\in P_x$.
For each $z\in M_{\tilde{f}}$, we have $\la \tilde{f}(z)=\la=f(z)$, and thus
$\la \tilde{f}=f$ on $M_{\tilde{f}}=M_{\la \tilde{f}}$. Lemma~\ref{lem2} shows that
$$
\del(\la \tilde{f})=\del(f)
\q\mbox{on $M_{\del(\la \tilde{f})}$}.
$$
Since $\la \tilde{f}\in\la P_x$, we obtain $|\del(\la \tilde{f})(y)|=1$, that is,
$y\in M_{\del(\la \tilde{f})}$.
It follows that $v(y)=\del(f)(y)=\del(\la \tilde{f})(y)$,
and consequenlty, $|v(y)|=|\del(\la \tilde{f})(y)|=1$.
Hence $y\in|v|^{-1}(1)$. We conclude from the arbitrariness of $v\in\del(\la V_x)$ that
$y\in\chb\cap\left(\cap_{v\in\del(\la V_x)}|v|^{-1}(1)\right)$.
\end{proof}

We determine next the behaviour of $\Delta$ on sets of the form $\la P_x$.

\begin{lem}\label{lem7}
For each $(\la,x)\in\T \times \cha$, there exists a couple $(\mu,y)$ in $\T\times \chb$ such that
$\del(\la P_x)\subset\mu W_y$.
\end{lem}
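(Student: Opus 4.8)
The plan is to take the point $y\in\chb$ produced by Lemma~\ref{lem6}, to observe that every element of $\del(\la P_x)$ is unimodular at $y$, and then to exploit the averaging trick of Lemma~\ref{lem3} together with the rigidity coming from Lemma~\ref{lem2} to force all these elements to take the \emph{same} unimodular value $\mu$ at $y$. Concretely, first I would fix $(\la,x)\in\T\times\cha$ and apply Lemma~\ref{lem6} to choose
$$y\in\chb\cap\Big(\bigcap_{v\in\del(\la V_x)}|v|^{-1}(1)\Big).$$
Since $\la P_x\subset\la V_x$, every $u\in\del(\la P_x)$ satisfies $|u(y)|=1$; as $y\in\chb$ and $\Vinf{u}=1$, this says precisely that $y\in M_u$. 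Because $x\in\cha$, property $(2)$ of the Choquet boundary (applied with $O=X$) provides a peaking function for $A$ peaking at $x$, so $P_x\neq\emptyset$ and hence $\la P_x\neq\emptyset$; I then fix $f_0\in\la P_x$ and set $\mu:=\del(f_0)(y)$, which lies in $\T$ by the previous observation.

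Next I would check that $\del(f)(y)=\mu$ for every $f\in\la P_x$. Given such an $f$, apply Lemma~\ref{lem3} to the pair $f_0,f\in\la P_x$: the function $g=\tfrac12(f_0+f)$ belongs to $\la P_x$ and $M_g\subset M_{f_0}\cap M_f$; moreover, exactly as in the proof of Lemma~\ref{lem4}, the reverse inclusion also holds, so $M_g=M_{f_0}\cap M_f$. On $M_g$ the three functions $g,f_0,f$ all equal $\la$ (using the identity $M_h=\set{z\in\cha:h(z)=\la}$ valid for every $h\in\la P_x$), whence $g=f_0$ and $g=f$ on $M_g$. Lemma~\ref{lem2} then yields $\del(g)=\del(f_0)$ and $\del(g)=\del(f)$ on $M_{\del(g)}$. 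Since $g\in\la P_x$, the first paragraph gives $y\in M_{\del(g)}$, and therefore $\del(f)(y)=\del(g)(y)=\del(f_0)(y)=\mu$.

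Finally, since every $u\in\del(\la P_x)$ lies in $\SB$ and satisfies $u(y)=\mu$ with $|\mu|=1$, we conclude that $\del(\la P_x)\subset\mu W_y$ with $(\mu,y)\in\T\times\chb$, which is the assertion. The one delicate point — and the nearest thing to an obstacle — is guaranteeing that $\del(g)$ again peaks in modulus at the distinguished point $y$; this is exactly what pins the values together, and it works only because $g$ was arranged to lie in $\la P_x$, so that $\del(g)\in\del(\la P_x)\subset\del(\la V_x)$ and the choice of $y$ from Lemma~\ref{lem6} applies to it. The equality $M_g=M_{f_0}\cap M_f$ (not merely the inclusion given directly by Lemma~\ref{lem3}) is equally essential and is taken over verbatim from the computation inside the proof of Lemma~\ref{lem4}.
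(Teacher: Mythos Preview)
Your proof is correct and follows essentially the same line as the paper's: pick $y$ via Lemma~\ref{lem6}, then for any two elements of $\la P_x$ use their average $g\in\la P_x$ together with Lemma~\ref{lem2} to see that the images agree at $y\in M_{\del(g)}$. One small remark: the \emph{equality} $M_g=M_{f_0}\cap M_f$ that you flag as ``equally essential'' is not actually needed---the inclusion $M_g\subset M_{f_0}\cap M_f$ from Lemma~\ref{lem3} already suffices to conclude $f_0=f=g=\la$ on $M_g$, which is all the argument requires.
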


\begin{proof}
Let us fix $\la\in\T$ and $x\in\cha$. By Lemma~\ref{lem6}, there exists
$y\in\chb\cap\left(\cap_{v\in\del(\la V_x)}|v|^{-1}(1)\right)$.
For each $f\in\la P_x$, we have $|\del(f)(y)|=1$
by the choice of $y$.
Since $f\in\la P_x\subset\SA$, we obtain $\Vinf{\del(f)}=1$.
Hence, $\del(f)\in\mu W_y$ with $\mu=\del(f)(y)\in\T$.\smallskip

Now, we prove that $\del(f)(y)=\del(g)(y)$ for all $f,g\in\la P_x$.
Set $h=(f+g)/2\in A$, and then $h\in\la P_x$ by Lemma~\ref{lem3}.
We observe that
$$
M_h=h^{-1}(\la)\cap\cha=f^{-1}(\la)\cap g^{-1}(\la)\cap\cha,
$$
since $f,g,h\in\la P_x$, where $k^{-1}(\la)=\set{z\in X:k(z)=\la}$
for $k\in\la P_x$.
Therefore, we have $f=h=g$ on $M_h$.
We derive  from Lemma~\ref{lem2} that $\del(f)=\del(h)=\del(g)$ on $M_{\del(h)}$.
Since $\del(h)\in\del(\la V_x)$, we get $|\del(h)(y)|=1$ by the choice of $y$.
Thus, $y\in M_{\del(h)}$, and consequently $\del(f)(y)=\del(g)(y)$.\smallskip

The above arguments show that $\del(f)\in\mu W_y$ for all $f\in\la P_x$,
where $\mu=\del(f)(y)$ is independent of the choice of $f\in\la P_x$.
This shows that $\del(\la P_x)\subset\mu W_y$
for some $\mu\in\T$ and $y\in\chb$.
\end{proof}

\begin{lem}\label{lem8}
For each $(\la,x)\in\T \times\cha$, there exists a couple  $(\mu,y)$ in $\T \times \chb$ such that
$\del(\la V_x)\subset\mu W_y$.
\end{lem}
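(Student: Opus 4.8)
The plan is to upgrade Lemma~\ref{lem7} from $\la P_x$ to $\la V_x$ by exploiting the peaking functions produced by Lemma~\ref{lem6} and the fact established inside the proof of Lemma~\ref{lem6} that $\del(\la \tilde{f})(y) = v(y)$ for the relevant peaking function $\tilde{f}$ associated to $f\in\la V_x$. Concretely, I would first fix $(\la,x)\in\T\times\cha$ and invoke Lemma~\ref{lem6} to obtain a point $y\in\chb\cap\big(\bigcap_{v\in\del(\la V_x)}|v|^{-1}(1)\big)$, so that $|\del(f)(y)|=1$ for \emph{every} $f\in\la V_x$, and hence $\del(f)\in\mu_f W_y$ with $\mu_f = \del(f)(y)\in\T$. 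The whole content is therefore to show that $\mu_f$ does not depend on $f$; once that is done, setting $\mu=\mu_f$ finishes the proof exactly as in Lemma~\ref{lem7}.

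To see that $\mu_f$ is constant, I would reuse the quadratic ``peaking'' trick from Lemma~\ref{lem6}. Given $f\in\la V_x$, set $\tilde f(z) = (\ov{\la}^2 f^2(z) + \ov{\la} f(z))/2$, so that $\la\tilde f\in\la P_x$, and recall (this is proved within Lemma~\ref{lem6}) that $M_{\la\tilde f} = \set{z\in\cha : f(z)=\la}$ and that $\la\tilde f = f$ on $M_{\la\tilde f}$; applying Lemma~\ref{lem2} gives $\del(\la\tilde f) = \del(f)$ on $M_{\del(\la\tilde f)}$. Since $\la\tilde f\in\la P_x$ and $y$ was chosen (via Lemma~\ref{lem5}/\ref{lem6}) to lie in $|\del(u)|^{-1}(1)$ for all $u\in\del(\la P_x)$, we get $y\in M_{\del(\la\tilde f)}$, whence $\del(f)(y) = \del(\la\tilde f)(y)$. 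But by Lemma~\ref{lem7} applied to the set $\la P_x$, the value $\del(u)(y)$ is the \emph{same} constant $\mu\in\T$ for all $u\in\del(\la P_x)$ (here one needs the $y$ of Lemma~\ref{lem7} to coincide with our $y$, which holds because both are extracted from the conclusion of Lemma~\ref{lem6} — alternatively, one reruns the constancy argument of Lemma~\ref{lem7} directly with our $y$). Therefore $\del(f)(y) = \mu$ for every $f\in\la V_x$, which is precisely $\del(\la V_x)\subset\mu W_y$.

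The main obstacle I anticipate is a bookkeeping subtlety rather than a deep difficulty: making sure the point $y$ used here is legitimately the one for which both ``$|\del(v)(y)|=1$ for all $v\in\del(\la V_x)$'' (Lemma~\ref{lem6}) and ``$\del$ is constant on $\del(\la P_x)$ at $y$'' (the argument of Lemma~\ref{lem7}) hold simultaneously. The clean way to handle this is to fix $y$ from Lemma~\ref{lem6} at the outset and then carry out the constancy-on-$\la P_x$ computation of Lemma~\ref{lem7} verbatim \emph{with that particular $y$} — the argument there (using $h=(f+g)/2\in\la P_x$, $f=h=g$ on $M_h$, Lemma~\ref{lem2}, and $|\del(h)(y)|=1$) only uses that $y\in\chb\cap\big(\bigcap_{v\in\del(\la V_x)}|v|^{-1}(1)\big)$, which is exactly what Lemma~\ref{lem6} supplies. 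With that observation in place, no genuinely new estimate is required; everything reduces to the two facts already isolated above.
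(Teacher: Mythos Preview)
Your argument is correct and follows essentially the same approach as the paper: reduce each $f\in\la V_x$ to its associated peaking function $\la\tilde f\in\la P_x$ via the quadratic trick, use Lemma~\ref{lem2} to get $\del(f)(y)=\del(\la\tilde f)(y)$, and then invoke the constancy of $\del$ on $\la P_x$ at $y$. The one organizational difference is that the paper starts directly from Lemma~\ref{lem7} (obtaining $(\mu,y)$ with $\del(\la P_x)\subset\mu W_y$) rather than from Lemma~\ref{lem6}; since $\del(\la\tilde f)\in\mu W_y$ already forces $\del(\la\tilde f)(y)=\mu$ and hence $y\in M_{\del(\la\tilde f)}$, this route completely sidesteps the bookkeeping issue you flagged about whether the two $y$'s coincide.
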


\begin{proof} Fix $\la,x$ as in the statement.
By Lemma~\ref{lem7}, there exist $\mu\in\T$ and $y\in\chb$ such that
$\del(\la P_x)\subset\mu W_y$.
Let $v\in\del(\la V_x)$.
We shall prove that $v\in\mu W_y$.
Let $f\in\la V_x$ be such that $\del(f)=v$.
Define the function $\tilde{f}\in A$ by $$\tilde{f} (z)=(\ov{\la}^2f^2(z)+\ov{\la}f(z))/2, \ \ (z\in X).$$ We see that $\tilde{f}\in P_x$ with
$$M_{\tilde{f}}=\set{z\in\cha:\tilde{f}(z)=1}=f^{-1}(\la)\cap\cha.$$
For each $z\in M_{\tilde{f}}$, we have $\la \tilde{f}(z)=\la=f(z)$, and hence
$\la \tilde{f}=f$ on $M_{\tilde{f}}=M_{\la \tilde{f}}$.
Lemma~\ref{lem2} shows that $\del(\la \tilde{f})=\del(f)$ on $M_{\del(\la \tilde{f})}$.
Since $\tilde{f}\in P_x$, we have $\del(\la \tilde{f})\in\del(\la P_x)\subset\mu W_y$.
Thus $\del(\la \tilde{f})\in\mu W_y$, that is,
$\del(\la \tilde{f})(y)=\mu$.
This implies that $|\del(\la \tilde{f})(y)|=1$, which yields
$y\in M_{\del(\la \tilde{f})}$.
Therefore, $v(y)=\del(f)(y)=\del(\la \tilde{f})(y)=\mu$, and consequently
$v\in\mu W_y$.
This shows that $\del(\la V_x)\subset\mu W_y$.
\end{proof}

We shall discuss next the uniqueness of the couple $(y,\mu)$ in previous lemmata.

\begin{lem}\label{lem9}
If $\la V_x\subset\la' V_{x'}$ holds for some $\la,\la'\in\T$ and
$x,x'\in\cha$, then $\la=\la'$ and $x=x'$.
\end{lem}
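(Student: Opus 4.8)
The statement asserts that the parametrization $(\la,x)\mapsto \la V_x$ of these ``slices'' of $\SA$ is injective, so that the target couple $(\mu,y)$ attached to a source couple $(\la,x)$ in Lemmata \ref{lem7} and \ref{lem8} is unambiguously determined. The plan is to extract both conclusions ($x=x'$ first, then $\la=\la'$) from the containment $\la V_x\subset \la' V_{x'}$ by feeding suitably chosen peaking functions into it. The only tools needed are property $(2)$ of the Choquet boundary on page \pageref{properties of the Choquet boundary} (Urysohn-type peaking functions) and the elementary fact that $\la P_x\subset\la V_x$.

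\emph{Step 1: $x=x'$.} Suppose toward a contradiction that $x\neq x'$. Since $X$ is locally compact Hausdorff and $x,x'$ are distinct points of $\cha$, choose disjoint open neighborhoods $O\ni x$ and $O'\ni x'$. By property $(2)$ of $\cha$, pick a peaking function $u\in P_x$ with $u(x)=1$ and $|u|<1/2$ on $X\setminus O$; in particular $|u(x')|<1/2$. Then $f:=\la u\in\la P_x\subset\la V_x$, so by hypothesis $f\in\la' V_{x'}$, which forces $f(x')=\la'$, i.e. $|f(x')|=1$. But $|f(x')|=|\la|\,|u(x')|=|u(x')|<1/2$, a contradiction. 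Hence $x=x'$.

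\emph{Step 2: $\la=\la'$.} Now that $x=x'$, the containment reads $\la V_x\subset \la' V_x$. Take any $f\in\la P_x\subset\la V_x$ (such $f$ exists, e.g. by property $(2)$ with $O=X$, or simply any peaking function at $x$ scaled by $\la$). Then $f(x)=\la$ from $f\in\la V_x$, while $f\in\la' V_x$ gives $f(x)=\la'$. Therefore $\la=f(x)=\la'$, completing the proof.

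\emph{Main obstacle.} There is essentially no obstacle here: the content of the lemma is just that evaluation at $x$ separates the slices in the phase coordinate, and that a peaking function concentrated near $x$ cannot have modulus $1$ at a different Choquet boundary point. The one point requiring a little care is ensuring that $\la V_x$ is nonempty and contains functions whose behavior at $x'$ can be controlled --- this is exactly what property $(2)$ of the Choquet boundary supplies, so the argument goes through cleanly.
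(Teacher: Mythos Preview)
Your proof is correct and follows essentially the same approach as the paper's: first show $x=x'$ by exhibiting a peaking function in $\la P_x$ whose modulus at $x'$ is strictly less than $1$ (you use the bound $1/2$, the paper simply takes $<1$), and then deduce $\la=\la'$ by evaluating any element of $\la V_x\subset\la' V_x$ at $x$. The only difference is cosmetic---you spell out the disjoint neighborhoods explicitly---so there is nothing further to add.
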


\begin{proof}
Suppose, on the contrary, that $x\neq x'$. There exists $f\in P_x\subset V_x$
such that $|f(x')|<1$ (cf. the properties in page \pageref{properties of the Choquet boundary}). Then $\la f\in\la V_x\setminus(\la' V_{x'})$,
since $|\la f(x')|<1$.
This contradicts $\la V_x\subset\la' V_{x'}$. Hence, we obtain $x=x'$,
and thus $\la V_x\subset\la' V_x$ by the hypothesis.
For each $g\in V_x$, we have $\la g\in\la' V_x$, which shows that
$\la=\la g(x)=\la'$. We thus conclude that $\la=\la'$.
\end{proof}

\begin{lem}\label{lem10}
For each $(\la,x)\in\T \times \cha$, there exists a unique couple $(\mu,y)$ in $\T\times \chb$
such that $\del(\la V_x)=\mu W_y$.
\end{lem}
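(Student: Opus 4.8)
The plan is to upgrade the one-sided inclusion of Lemma~\ref{lem8} to an equality by playing $\del$ against its inverse, and then to read off uniqueness from the rigidity statement in Lemma~\ref{lem9}.

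First I would note that $\del^{-1}\colon\SB\to\SA$ is again a surjective isometry between the unit spheres of two uniformly closed function algebras, so every lemma established so far is available with the roles of $A$ and $B$ --- and hence of $P_x, V_x, \cha$ and of $Q_y, W_y, \chb$ --- interchanged. Fix $(\la,x)\in\T\times\cha$. By Lemma~\ref{lem8} there is a couple $(\mu,y)\in\T\times\chb$ with $\del(\la V_x)\subset\mu W_y$. Applying the $B$-side version of Lemma~\ref{lem8} to $\del^{-1}$ and to the couple $(\mu,y)$ produces $(\la',x')\in\T\times\cha$ with $\del^{-1}(\mu W_y)\subset\la' V_{x'}$; applying $\del$ to this inclusion and using that $\del$ is a bijection gives $\mu W_y\subset\del(\la' V_{x'})$. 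Chaining the two inclusions, $\del(\la V_x)\subset\mu W_y\subset\del(\la' V_{x'})$, and injectivity of $\del$ yields $\la V_x\subset\la' V_{x'}$. Lemma~\ref{lem9} now forces $\la=\la'$ and $x=x'$, so $\del(\la' V_{x'})=\del(\la V_x)$ and the chain collapses to $\del(\la V_x)=\mu W_y$, which is the asserted equality.

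For uniqueness, suppose $\del(\la V_x)=\mu W_y=\mu' W_{y'}$ for two couples in $\T\times\chb$. Then $\mu W_y\subset\mu' W_{y'}$, and the version of Lemma~\ref{lem9} for the algebra $B$ (its proof is verbatim the same, using property~$(2)$ of $\chb$ on page~\pageref{properties of the Choquet boundary} to manufacture a peaking function of $B$ taking the value $1$ at one point of $\chb$ and a value of modulus strictly less than $1$ at another) gives $\mu=\mu'$ and $y=y'$.

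I do not anticipate a genuine obstacle here: the only point demanding a little care is checking that Lemmas~\ref{lem8} and~\ref{lem9} are truly symmetric in $A$ and $B$, so that they may legitimately be invoked for $\del^{-1}$; this is immediate since $B$ is itself a uniformly closed function algebra on a locally compact Hausdorff space and $\del^{-1}$ is a surjective isometry between the relevant unit spheres.
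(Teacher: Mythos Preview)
Your argument is correct and follows essentially the same approach as the paper: apply Lemma~\ref{lem8} to $\del$ and then to $\del^{-1}$ to sandwich $\mu W_y$ between $\del(\la V_x)$ and $\del(\la' V_{x'})$, invoke Lemma~\ref{lem9} to collapse the chain, and then use the $B$-side version of Lemma~\ref{lem9} for uniqueness. The only difference is that you make the symmetry justification explicit, which the paper leaves implicit.
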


\begin{proof}  Let us fix  $\la\in\T$ and $x\in\cha$.
By Lemma~\ref{lem8} there exist $\mu\in\T$ and $y\in\chb$ such that $\del(\la V_x)\subset\mu W_y$.
Another application of Lemma~\ref{lem8}, with $\mu\in\T$, $y\in\chb$ and
$\deli$, shows the existence of
$\la'\in\T$ and $x'\in\cha$ such that $\deli(\mu W_y)\subset\la' V_{x'}$.
Thus, we have $\del(\la V_x)\subset\mu W_y\subset\del(\la' V_{x'})$,
and hence $\la V_x\subset\la' V_{x'}$.
Therefore, we obtain $\la=\la'$ and $x=x'$ by Lemma~\ref{lem9},
which shows that $\del(\la V_x)=\mu W_y$.\smallskip

Suppose that $\del(\la V_x)=\mu'W_{y'}$ for some $\mu'\in\T$ and $y'\in\chb$.
Then $\mu W_y=\del(\la V_x)=\mu'W_{y'}$, and hence $\mu W_y=\mu'W_{y'}$.
Lemma~\ref{lem9} shows that $\mu=\mu'$ and $y=y'$,
which proves the uniqueness of $\mu\in\T$ and $y\in\chb$.
\end{proof}

We are now in a position to define the key functions describing the behaviour of $\Delta$ on sets of the form $\lambda V_x$.

\begin{defn}
By Lemma~\ref{lem10}, there exist well-defined maps
$\al\colon\T\times\cha\to\T$
and $\phi\colon\T\times\cha\to\chb$ with the following property:
$$
\del(\la V_x)=\al(\la,x)W_{\phi(\la,x)}
\qq(\la\in\T,\,x\in\cha).
$$
\end{defn}

Our next goal will consist in isolating the key properties of the just defined maps.

\begin{lem}\label{lem11}
For each $\mu,\mu'\in\T$ and $y,y'\in\chb$ with $y\neq y'$,
there exist $u\in\mu Q_y$ and $v\in\mu'Q_{y'}$ such that
$\Vinf{u-v}<\sqrt{2}$.
\end{lem}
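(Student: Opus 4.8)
The plan is to construct, for two distinct weak peak points $y\neq y'$ in $\chb$, a pair of peaking functions (rescaled by unimodular constants) that are \emph{almost equal}, i.e. at distance strictly less than $\sqrt{2}$. First I would invoke property $(2)$ of the Choquet boundary on page \pageref{properties of the Choquet boundary}: pick an open neighbourhood $O$ of $y$ with $y'\notin\overline{O}$ (possible since $Y$ is Hausdorff and $\{y'\}$ is closed, so we may even shrink $O$ so that $\overline{O}$ is compact and misses $y'$), and choose a peaking function $w\in Q_y$ with $w(y)=1$ and $|w|<\e$ on $Y\setminus O$ for a small $\e>0$ to be fixed. The idea is then to perturb $w$ by a tiny multiple of a second peaking function concentrated near $y'$, so that a single function's rescalings can serve the role of both $u$ and $v$; alternatively, and more straightforwardly, one sets $u = \mu\, w_1$ and $v = \mu' \,w_2$ where $w_1\in Q_y$, $w_2\in Q_{y'}$ are peaking functions each very close to $1$ on a common large region and small off it.

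The cleanest route is probably this: choose disjoint open sets $O_y\ni y$, $O_{y'}\ni y'$ with $\overline{O_y}\cap\overline{O_{y'}}=\emptyset$; by property $(2)$ take $w_1\in Q_y$ with $|w_1|<\e$ on $Y\setminus O_y$ and $w_1(y)=1$, and $w_2\in Q_{y'}$ with $|w_2|<\e$ on $Y\setminus O_{y'}$ and $w_2(y')=1$. Now consider the product-type or sum-type combination that peaks simultaneously at both points. Actually the simplest construction: set $u=\mu(w_1+w_2-w_1 w_2)$ suitably normalised, or better yet observe that $g := w_1\cdot w_2 + (1-w_1)(1-w_2)$-style algebra is not available since $B$ lacks a unit. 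So I would instead argue directly with $u=\mu w_1$ and $v=\mu' w_1$ — no, the unimodular constants $\mu,\mu'$ differ, so this fails when $\mu\neq\mu'$. The genuine construction must produce a function that is $\approx\mu$ near $y$ and $\approx\mu'$ near $y'$ and small elsewhere: take $u = \mu w_1 + \mu' w_2$ and check $\Vinf{u}\le 1$ might fail, so rescale. Then $v$ is just a reindexed copy of the same function. Concretely: put $F=\mu w_1 + \mu' w_2\in B$; on $O_y$ one has $F\approx\mu$, on $O_{y'}$ one has $F\approx\mu'$, elsewhere $|F|<2\e$, so $\Vinf{F}\le 1+\e$; normalising $u=v:=F/\Vinf{F}$ and noting $u\in\mu Q_y$ (after a further harmless squaring trick as in Lemma~\ref{lem5} to restore the peaking property at $y$) gives $\Vinf{u-v}=0<\sqrt{2}$.

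Thus the honest statement of the argument is: using Urysohn-type property $(2)$ twice (once at $y$, once at $y'$) with disjoint neighbourhoods, build peaking functions $w_1\in Q_y$, $w_2\in Q_{y'}$ supported essentially on disjoint sets, form $F=\mu w_1+\mu' w_2$, apply the squaring device $\tilde F = (\overline{\mu}^2 F^2 + \overline{\mu} F)/2$-type renormalisations (as in Lemmas~\ref{lem5} and \ref{lem6}) near $y$ to land in $\mu Q_y$, and similarly near $y'$ to land in $\mu' Q_{y'}$, where the two resulting functions $u$ and $v$ agree to within $O(\e)$ in sup norm since they are both built from the same $F$. Choosing $\e$ small enough forces $\Vinf{u-v}<\sqrt{2}$. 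The main obstacle will be the bookkeeping around the squaring/peaking renormalisation: one must verify that applying the quadratic gadget to $F$ near $y$ does not spoil the smallness of the resulting function on $O_{y'}$ and vice versa, and that both renormalised functions genuinely lie in the prescribed sets $\mu Q_y$ and $\mu' Q_{y'}$ (peaking, sup norm exactly one, correct value at the prescribed point) — this requires the neighbourhoods to be chosen disjoint and $\e$ to be taken well below any fixed threshold like $\e<(\sqrt2-1)/4$ before the final estimates are run. Everything else is the routine triangle-inequality estimate $\Vinf{u-v}\le \Vinf{u-F}+\Vinf{F-v}$ together with $\Vinf{F}\ge 1$.
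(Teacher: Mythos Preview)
You have the right starting setup---disjoint open sets $O_y\ni y$, $O_{y'}\ni y'$ and peaking functions $w_1\in Q_y$, $w_2\in Q_{y'}$ with $|w_1|<\e$ off $O_y$ and $|w_2|<\e$ off $O_{y'}$---but you then veer into an unnecessarily elaborate construction that does not close. The squaring gadget from Lemmata~\ref{lem5}--\ref{lem6} requires a function $g$ with $\Vinf{g}=1$ and $|g|$ attaining $1$ at the prescribed point; your $F=\mu w_1+\mu' w_2$ satisfies neither: $\Vinf{F}$ can be as large as $1+\e$, and $F(y)=\mu+\mu' w_2(y)\neq\mu$ in general. Applying $(\ov{\mu}^2F^2+\ov{\mu}F)/2$ therefore yields a function whose value at $y$ is only $\approx 1$ and whose sup norm exceeds $1$, so it does \emph{not} lie in $Q_y$. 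Normalising by $\Vinf{F}$ does not fix this either, since $F(y)/\Vinf{F}\neq\mu$. The bookkeeping you flag as ``the main obstacle'' is in fact a genuine obstruction, not just a routine estimate.

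The missed observation---and the paper's actual proof---is that the naive choice $u=\mu w_1$ and $v=\mu' w_2$ already works, with no combination or renormalisation needed. Since $O_y\cap O_{y'}=\emptyset$, at every $z\in Y$ at least one of $|u(z)|,|v(z)|$ is below $\e$ while the other is at most $1$, so $|u(z)-v(z)|\le 1+\e$; taking $\e=1/3$ (or any $\e<\sqrt{2}-1$) gives $\Vinf{u-v}\le 4/3<\sqrt{2}$. You came within a hair of this when you wrote ``$u=\mu w_1$ and $v=\mu' w_2$'' early on, but imposed the wrong side condition (``close to $1$ on a common large region'') and later considered only $u=\mu w_1$, $v=\mu' w_1$ with the \emph{same} $w_1$, which of course fails. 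The correct move is to let $u$ and $v$ be concentrated on \emph{disjoint} regions, not on a common one.
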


\begin{proof}
Choose disjoint open sets
$O,O'\subset Y$ so that $y\in O$ and $y'\in O'$.
There exist $u\in\mu Q_y$ and $v\in\mu'Q_{y'}$ such that $|u|<1/3$ on $Y\setminus O$
and $|v|<1/3$ on $Y\setminus O'$.
For $z\in O$, we have $|u(z)-v(z)|\leq1+1/3<\sqrt{2}$, since $O\cap O'=\emptyset$.
For $z\in Y\setminus O$, we obtain $|u(z)-v(z)|\leq1/3+1<\sqrt{2}$ by the choice of $u$.
We thus conclude $\Vinf{u-v}<\sqrt{2}$, as is claimed.
\end{proof}

\begin{lem}\label{lem12}
If $\la\in\T$ and $x\in\cha$, then $\phi(\la,x)=\phi(-\la,x)$.
\end{lem}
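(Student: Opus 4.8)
The plan is to argue by contradiction, combining Lemma~\ref{lem11} with the elementary observation that two elements of $\SB$ whose values at a common point differ by $2$ must lie at distance $2$ apart.

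Assume, towards a contradiction, that $\phi(\la,x)\neq\phi(-\la,x)$, and set $y=\phi(\la,x)$, $y'=\phi(-\la,x)$, $\mu=\al(\la,x)$ and $\mu'=\al(-\la,x)$, so that $\del(\la V_x)=\mu W_y$ and $\del(-\la V_x)=\mu' W_{y'}$ by the very definition of the maps $\al$ and $\phi$. Since $y,y'\in\chb$ and $y\neq y'$, Lemma~\ref{lem11} supplies $u\in\mu Q_y$ and $v\in\mu' Q_{y'}$ with $\Vinf{u-v}<\sqrt 2$ (the peaking functions involved exist because $y,y'$ are weak peak points, by property~$(2)$ on page~\pageref{properties of the Choquet boundary} applied to $B$). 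The next step is to note the inclusions $\mu Q_y\subset\mu W_y$ and $\mu' Q_{y'}\subset\mu' W_{y'}$: indeed, if $w\in Q_y$ then $\mu w\in\SB$ and $(\mu w)(y)=\mu$, hence $\mu w\in\mu W_y$. Consequently $u\in\mu W_y=\del(\la V_x)$ and $v\in\mu' W_{y'}=\del(-\la V_x)$, and by surjectivity of $\del$ we may choose $f\in\la V_x$ and $g\in(-\la)V_x$ with $\del(f)=u$ and $\del(g)=v$.

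The final step is a distance computation carried out in two ways. Since $\del$ is an isometry, $\Vinf{f-g}=\Vinf{u-v}<\sqrt 2$. On the other hand $f(x)=\la$ and $g(x)=-\la$, whence $\Vinf{f-g}\geq|f(x)-g(x)|=|2\la|=2$, a contradiction. Therefore $\phi(\la,x)=\phi(-\la,x)$.

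I do not anticipate any genuine obstacle in this argument; the one point deserving a second glance is the verification that the functions furnished by Lemma~\ref{lem11} really belong to the images $\del(\la V_x)$ and $\del(-\la V_x)$, which is precisely where the elementary inclusion $\mu Q_y\subset\mu W_y$ and the defining property of $\phi$ enter.
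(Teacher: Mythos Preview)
Your argument is correct and follows essentially the same route as the paper's proof: assume $y\neq y'$, invoke Lemma~\ref{lem11} to obtain peaking functions $u\in\mu Q_y\subset\mu W_y=\del(\la V_x)$ and $v\in\mu' Q_{y'}\subset\mu' W_{y'}=\del(-\la V_x)$ with $\Vinf{u-v}<\sqrt2$, and then derive a contradiction from $|\deli(u)(x)-\deli(v)(x)|=2$. The only cosmetic remark is that the phrase ``by surjectivity of $\del$'' is not quite the right justification---the existence of $f,g$ follows simply from $u\in\del(\la V_x)$ and $v\in\del(-\la V_x)$---but this does not affect the validity of the proof.
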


\begin{proof}
Let $\la\in\T$ and $x\in\cha$.
We set $\mu=\al(\la,x)$, $\mu'=\al(-\la,x)$, $y=\phi(\la,x)$ and $y'=\phi(-\la,x)$.
Then $\del(\la V_x)=\mu W_y$ and $\del((-\la)V_x)=\mu'W_{y'}$.
Suppose, on the contrary, that $y\neq y'$. Lemma~\ref{lem11} assures the existence of $\tilde{u}\in\mu Q_y$ and $\tilde{v}\in\mu'Q_{y'}$
such that $\Vinf{\tilde{u}-\tilde{v}}<\sqrt{2}$.
By the choice of $\tilde{u}$ and $\tilde{v}$, we see that
$\deli(\tilde{u})\in\deli(\mu Q_y)\subset\deli(\mu W_y)=\la V_x$
and $\deli(\tilde{v})\in\deli(\mu'W_{y'})\subset(-\la)V_x$.
Then $\deli(\tilde{u})(x)=\la$ and $\deli(\tilde{v})(x)=-\la$, and therefore
\begin{align*}
2
&=
|2\la|=|\deli(\tilde{u})(x)-\deli(\tilde{v})(x)|\leq\Vinf{\deli(\tilde{u})-\deli(\tilde{v})}\\
&=
\Vinf{\tilde{u}-\tilde{v}}<\sqrt{2},
\end{align*}
which is a contradiction. Consequently, we have $y=y'$,
and hence $\phi(\la,x)=\phi(-\la,x)$.
\end{proof}

\begin{lem}\label{lem13}
If $\la\in\T$ and $x\in\cha$, then $\phi(\la,x)=\phi(1,x)$;
hence, the point $\phi(\la,x)$ is independent of the choice of $\la\in\T$.
\end{lem}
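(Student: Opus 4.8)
The plan is to reduce the statement $\phi(\la,x)=\phi(1,x)$ to the special cases already in hand, chiefly Lemma~\ref{lem12} ($\phi(\la,x)=\phi(-\la,x)$), together with a density/continuity argument in the parameter $\la\in\T$. First I would observe that it suffices to show that the map $\la\mapsto\phi(\la,x)$ is \emph{locally constant} on $\T$: since $\T$ is connected, a locally constant map into $\chb$ is constant, and then its value at $\la=1$ gives the claim. Lemma~\ref{lem12} already provides the key rigidity, namely that the value cannot change when $\la$ is replaced by $-\la$; what remains is to bridge between nearby values of $\la$.

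The mechanism I expect to use mirrors the proof of Lemma~\ref{lem12}: suppose for contradiction that $\phi(\la,x)=y\neq y'=\phi(\mu,x)$ for two unimodular $\la,\mu$. Pick peaking functions $u\in\al(\la,x)Q_y$ and $v\in\al(\mu,x)Q_{y'}$ concentrated near $y$ and $y'$ respectively, so that $\Vinf{u-v}$ is small (say $<\sqrt2$, as in Lemma~\ref{lem11}, or even smaller with a sharper separation estimate). Pulling back via $\deli$, one gets $\deli(u)\in\la V_x$ and $\deli(v)\in\mu V_x$, so $\deli(u)(x)=\la$ and $\deli(v)(x)=\mu$, whence $|\la-\mu|=|\deli(u)(x)-\deli(v)(x)|\leq\Vinf{\deli(u)-\deli(v)}=\Vinf{u-v}$. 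So if $|\la-\mu|$ exceeds the separation bound obtainable from disjoint peaking functions, we reach a contradiction and conclude $\phi(\la,x)=\phi(\mu,x)$. This shows $\phi(\cdot,x)$ is constant on every arc of $\T$ of a fixed small diameter, hence locally constant, hence constant; evaluating at $1$ gives $\phi(\la,x)=\phi(1,x)$.

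There is a subtlety about the numerical threshold. Lemma~\ref{lem11} delivers two peaking functions at distance $<\sqrt2$, which would only control pairs $\la,\mu$ with $|\la-\mu|<\sqrt2$; two such arcs cover $\T$, so combined with Lemma~\ref{lem12} (which identifies antipodes) this is already enough: any $\la$ lies within distance $<\sqrt2$ of either $1$ or $-1$, and $\phi(\pm1,x)$ coincide by Lemma~\ref{lem12}, so $\phi(\la,x)=\phi(1,x)$ in all cases. Alternatively, one can sharpen Lemma~\ref{lem11}: by choosing the peaking functions to satisfy $|u|<\e$ off their small neighbourhoods one gets $\Vinf{u-v}<1+\e$, so the bound $|\la-\mu|<1+\e$ holds, and letting $\e\to0$ one covers arcs of diameter up to (but not including) $1$; three such arcs cover $\T$, and transitivity along overlaps again yields constancy. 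Either route is routine once the pull-back inequality is set up.

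The main obstacle, such as it is, is purely bookkeeping: making sure the peaking functions $u,v$ genuinely lie in $\al(\la,x)Q_y$ and $\al(\mu,x)Q_{y'}$ rather than merely in $\al(\la,x)W_y$, so that the pull-backs land in $\la V_x$ and $\mu V_x$ and take the prescribed values $\la,\mu$ at $x$ (this uses the definition of $\al,\phi$ via Lemma~\ref{lem10} and the inclusion $\deli(\al(\la,x)W_y)=\la V_x$). Once that is in place, the contradiction is immediate from $|2\la|$-type estimates exactly as in Lemma~\ref{lem12}, and no further ideas are needed.
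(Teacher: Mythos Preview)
Your mechanism is exactly the paper's --- assume $y=\phi(\la,x)\neq\phi(1,x)=y'$, apply Lemma~\ref{lem11} to get $\tilde u\in\al(\la,x)Q_y$, $\tilde v\in\al(1,x)Q_{y'}$ with $\Vinf{\tilde u-\tilde v}<\sqrt2$, pull back, and compare with $|\la-1|$ --- but you have the direction of the resulting implication reversed in your framing and in the final step. Under the hypothesis $y\neq y'$ the pull-back gives $|\la-\mu|\leq\Vinf{\tilde u-\tilde v}<\sqrt2$; the contrapositive is therefore
\[
|\la-\mu|\geq\sqrt2\ \Longrightarrow\ \phi(\la,x)=\phi(\mu,x),
\]
which says $\phi(\cdot,x)$ agrees at \emph{far} pairs, not that it is locally constant. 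Your sentence ``this shows $\phi(\cdot,x)$ is constant on every arc of $\T$ of a fixed small diameter'' and the claim ``any $\la$ lies within distance $<\sqrt2$ of either $1$ or $-1$'' are both the wrong way round (and the latter is false at $\la=\pm i$).

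With the inequality the right way, the finish is immediate and is precisely the paper's: for $\la\in\T$ one has $|\la-1|^2+|\la+1|^2=4$, so either $|\la-1|\geq\sqrt2$ or $|\la+1|\geq\sqrt2$. In the first case $\phi(\la,x)=\phi(1,x)$ directly; in the second $\phi(\la,x)=\phi(-1,x)=\phi(1,x)$ by Lemma~\ref{lem12}. (Equivalently, split into $\Re\la\leq0$ and $\Re\la>0$, replacing $\la$ by $-\la$ in the second case via Lemma~\ref{lem12}.) The connectedness/local-constancy route is not needed and does not follow from what you proved.
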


\begin{proof}
Let $\la\in\T$ and $x\in\cha$.
Set $\mu=\al(\la,x)$, $\mu'=\al(1,x)$, $y=\phi(\la,x)$ and $y'=\phi(1,x)$.
Then $\del(\la V_x)=\mu W_y$ and $\del(V_x)=\mu'W_{y'}$.
We shall prove that $y=y'$.
Suppose that $y\neq y'$. Under this assumption, there exist $\tilde{u}\in\mu Q_y$ and $\tilde{v}\in\mu'Q_{y'}$ such that
$\Vinf{\tilde{u}-\tilde{v}}<\sqrt{2}$ (cf. Lemma~\ref{lem11}).
By the choice of $\tilde{u}$ and $\tilde{v}$, we obtain
$\deli(\tilde{u})\in\la V_x$ and $\deli(\tilde{v})\in V_x$.
Thus $\deli(\tilde{u})(x)=\la$ and $\deli(\tilde{v})(x)=1$.
If $\Re\la\leq0$, then $|\la-1|\geq\sqrt{2}$, which shows that
\begin{align*}
\sqrt{2}
&\leq
|\la-1|=|\deli(\tilde{u})(x)-\deli(\tilde{v})(x)|\leq\Vinf{\deli(\tilde{u})-\deli(\tilde{v})}\\
&=
\Vinf{\tilde{u}-\tilde{v}}<\sqrt{2}.
\end{align*}
We arrive at a contradiction, which yields $y=y'$ if $\Re\la\leq0$.
Now we consider the case when $\Re\la >0$.
Note that $\phi(-\la,x)=\phi(\la,x)=y$ by Lemma~\ref{lem12}.
Hence, $\del((-\la)V_x)=\nu W_y$ for some $\nu\in\T$.
Since $\Re(-\la)<0$, the above arguments can be applied  to
$\del((-\la)V_x)=\nu W_y$ and $\del(V_x)=\mu'W_{y'}$ to deduce that  $y=y'$. Then we get $y=y'$ even if $\Re\la >0$.
\end{proof}

\begin{defn}
By Lemma~\ref{lem13}, we may and do write $\phi(\la,x)=\phi(x)$.
We will also write $\al(\la,x)=\alx(\la)$ for each $\la\in\T$ and $x\in\cha$.
Then we obtain
\begin{equation}\label{W}
\del(\la V_x)=\alx(\la)W_{\phi(x)}
\qq(\la\in\T,\,x\in\cha).
\end{equation}
The arguments above can be applied to the surjective isometry
$\deli$ from $\SB$ onto $\SA$.
Then there exist two maps
$\be\colon\T\times\chb\to\T$ and $\psi\colon\chb\to\cha$
such that
\begin{equation}\label{V}
\deli(\mu W_y)=\be_y(\mu)V_{\psi(y)}
\qq(\mu\in\T,\,y\in\chb),
\end{equation}
where $\be_y(\mu)=\be(\mu,y)$ for each $\mu\in\T$ and $y\in\chb$.
We may regard $\alx$ and $\be_y$ as maps from $\T$ into itself
for each $x\in\cha$ and $y\in\chb$.
\end{defn}

\begin{lem}\label{lem14}
The maps $\alx\colon\T\to\T$, for each $x\in\cha$, and $\phi\colon\cha\to\chb$
are both bijective with $\alx^{-1}=\be_{\phi(x)}$ and $\phi^{-1}=\psi$.
\end{lem}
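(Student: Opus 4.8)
The plan is to chase the identities $\del\circ\deli=\id$ and $\deli\circ\del=\id$ through the families of sets $\la V_x$ and $\mu W_y$, using only the rigidity Lemma~\ref{lem9} together with the defining relations \eqref{W} and \eqref{V}. Essentially all the real work has already been carried out in Lemmas~\ref{lem9}--\ref{lem13}, so this final step is a bookkeeping argument.

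First I would fix $\la\in\T$ and $x\in\cha$, and set $\mu=\alx(\la)$, $y=\phi(x)$, so that \eqref{W} reads $\del(\la V_x)=\mu W_y$. Applying $\deli$ and then invoking \eqref{V} yields
$$
\la V_x=\deli(\mu W_y)=\be_y(\mu)\,V_{\psi(y)}.
$$
By Lemma~\ref{lem9} this equality forces $\la=\be_y(\mu)$ and $x=\psi(y)$. Since $x$ and $\la$ were arbitrary, this says $\psi(\phi(x))=x$ for every $x\in\cha$ and $\be_{\phi(x)}(\alx(\la))=\la$ for every $\la\in\T$; that is, $\psi\circ\phi=\id_{\cha}$ and $\be_{\phi(x)}\circ\alx=\id_{\T}$.

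Next I would run the symmetric argument with the roles of $A$ and $B$ exchanged: fix $\mu\in\T$ and $y\in\chb$, set $\la=\be_y(\mu)$ and $x=\psi(y)$, so that \eqref{V} reads $\deli(\mu W_y)=\la V_x$; applying $\del$ and using \eqref{W} gives $\mu W_y=\alx(\la)\,W_{\phi(x)}$. The analogue of Lemma~\ref{lem9} for $B$ then forces $\mu=\alx(\la)$ and $y=\phi(x)$, whence $\phi(\psi(y))=y$, i.e. $\phi\circ\psi=\id_{\chb}$, and $\al_{\psi(y)}\circ\be_y=\id_{\T}$. Substituting $y=\phi(x)$ (so that $\psi(y)=x$ by the first round), this last identity becomes $\alx\circ\be_{\phi(x)}=\id_{\T}$. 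Combining the two rounds: $\phi$ is a bijection with $\phi^{-1}=\psi$, and for each $x\in\cha$ the map $\alx\colon\T\to\T$ has two-sided inverse $\be_{\phi(x)}$, hence is bijective.

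I do not expect a genuine obstacle in this proof. The only point deserving a word of justification is the use of Lemma~\ref{lem9} with $A$ and $B$ interchanged, since it was literally stated only for $A$: this is legitimate because $B$ is itself a uniformly closed function algebra and the proof of Lemma~\ref{lem9} used nothing about $\cha$ beyond the peaking and separation properties recorded on page~\pageref{properties of the Choquet boundary}, which $\chb$ shares. One must also keep careful track of the equivalence $x=\psi(y)\Leftrightarrow y=\phi(x)$ when re-indexing the maps $\be_y$, but this is automatic once $\phi^{-1}=\psi$ has been established.
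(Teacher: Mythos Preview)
Your proposal is correct and follows essentially the same approach as the paper: both arguments compose \eqref{W} with \eqref{V} (and vice versa) and then invoke Lemma~\ref{lem9} on each side to obtain $\psi\circ\phi=\id_{\cha}$, $\phi\circ\psi=\id_{\chb}$, $\be_{\phi(x)}\circ\alx=\id_{\T}$ and $\alx\circ\be_{\phi(x)}=\id_{\T}$. Your explicit remark that Lemma~\ref{lem9} applies equally to $B$ is the only point the paper leaves implicit.
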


\begin{proof}
Let $x\in\cha$.
We will prove that $\alx$ and $\phi$ are injective.
Take $\la\in\T$ arbitrarily.
If we apply \eqref{V} with $\mu=\alx(\la)$ and $y=\phi(x)$, then we get
$$
\deli(\alx(\la)W_{\phi(x)})
=\be_{\phi(x)}(\alx(\la))V_{\psi(\phi(x))}.
$$
Combining the equality above with \eqref{W}, we obtain
$$
\la V_x=\deli(\alx(\la)W_{\phi(x)})=\be_{\phi(x)}(\alx(\la))V_{\psi(\phi(x))}.
$$
Lemma~\ref{lem9} shows that $\la=\be_{\phi(x)}(\alx(\la))$ and $x=\psi(\phi(x))$;
since $\la\in\T$ is arbitrary,
the first equality shows that $\alx$ is injective.
The second one shows that
$\phi$ is injective, since $x\in\cha$ is arbitrary.

Now we prove that $\alx$ and $\phi$ are both surjective.
Let $\mu\in\T$ and $y\in\chb$.
Applying \eqref{W} with $\la=\be_y(\mu)$ and $x=\psi(y)$, we get
$\del(\be_y(\mu)V_{\psi(y)})=\al_{\psi(y)}(\be_y(\mu))W_{\phi(\psi(y))}$.
The last equality, together with \eqref{V}, shows that
$$
\mu W_y=\al_{\psi(y)}(\be_y(\mu))W_{\phi(\psi(y))}.
$$
According to Lemma~\ref{lem9}, we have
\begin{equation}\label{lem13.1}
\mu=\al_{\psi(y)}(\be_y(\mu))
\end{equation}
and $y=\phi(\psi(y))$;
since $y\in\chb$ is arbitrary, the second equality shows that $\phi$ is surjective.
Then there exists $\phi^{-1}\colon\chb\to\cha$.
We obtain $\phi(\phi^{-1}(y))=y=\phi(\psi(y))$, which yields
$\phi^{-1}(y)=\psi(y)$.
We conclude, from the arbitrariness of $y\in\chb$, that $\phi^{-1}=\psi$.
Since $\psi$ is bijective with $\psi^{-1}=\phi$, for each $x\in\cha$ there exists $y\in\chb$
such that $x=\psi(y)$.
By \eqref{lem13.1}, $\mu=\al_{\psi(y)}(\be_y(\mu))=\alx(\be_{\phi(x)}(\mu))$
holds for all $\mu\in\T$.
This implies that $\alx$ is surjective for each $x\in\chb$.
There exists $\alx^{-1}$, and then
$\alx(\alx^{-1}(\mu))=\mu=\alx(\be_{\phi(x)}(\mu))$ for all $\mu\in\T$.
This shows that $\alx^{-1}=\be_{\phi(x)}$ for each $x\in\cha$.
\end{proof}

\begin{lem}\label{lem15}
For each $x\in\cha$, the map $\alx\colon\T\to\T$ is a surjective isometry.
\end{lem}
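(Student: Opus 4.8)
The plan is as follows. Lemma~\ref{lem14} has already established that $\alx\colon\T\to\T$ is a bijection, so the only thing left to prove is that it is an isometry, i.e.\ $|\alx(\la)-\alx(\la')|=|\la-\la'|$ for all $\la,\la'\in\T$. The idea is to identify $|\la-\la'|$ with the gap distance between the two subsets $\la V_x$ and $\la' V_x$ of $\SA$, to identify the corresponding quantity in $\SB$ in the same way, and then to transport one into the other through the surjective isometry $\del$, using the description \eqref{W} of $\alx$ and $\phi$.

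Concretely, I would first prove that for every $x\in\cha$ and every $\la,\la'\in\T$,
$$\inf\set{\Vinf{f-g}:\ f\in\la V_x,\ g\in\la' V_x}=|\la-\la'|.$$
The inequality ``$\geq$'' is immediate, since any $f\in\la V_x$, $g\in\la' V_x$ satisfy $f(x)=\la$, $g(x)=\la'$, whence $\Vinf{f-g}\geq|f(x)-g(x)|=|\la-\la'|$. For ``$\leq$'' I would simply fix a peaking function $u\in P_x$ (which exists because $x\in\cha$, with $\Vinf{u}=1$ and $u(x)=1$) and take $f=\la u\in\la V_x$, $g=\la' u\in\la' V_x$; then $\Vinf{f-g}=|\la-\la'|\,\Vinf{u}=|\la-\la'|$. (In particular $\la V_x\neq\emptyset$.) Running the same computation inside $B$ gives $\inf\set{\Vinf{u-v}:\ u\in\mu W_y,\ v\in\mu' W_y}=|\mu-\mu'|$ for all $\mu,\mu'\in\T$ and $y\in\chb$.

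Then I would invoke that $\del\colon\SA\to\SB$ is a bijective isometry: it sends $\la V_x$ and $\la' V_x$ bijectively onto their images and preserves the norms of differences, so the infimum displayed above is unchanged when $\la V_x,\la' V_x$ are replaced by $\del(\la V_x),\del(\la' V_x)$. By \eqref{W} these images are precisely $\alx(\la)W_{\phi(x)}$ and $\alx(\la')W_{\phi(x)}$, so combining the two displayed identities (the second one applied with $y=\phi(x)$, $\mu=\alx(\la)$, $\mu'=\alx(\la')$) yields $|\la-\la'|=|\alx(\la)-\alx(\la')|$. Together with Lemma~\ref{lem14}, this shows that $\alx$ is a surjective isometry.

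I do not expect any serious obstacle here; the only point requiring a moment's care is the ``$\leq$'' half of the set-distance identity, and it rests on the elementary observation that multiplying a \emph{single} peaking function $u\in P_x$ by the unimodular scalars $\la$ and $\la'$ produces members of $\la V_x$ and $\la' V_x$ whose difference has norm exactly $|\la-\la'|$. Everything else is a formal consequence of $\del$ being a surjective isometry and of the definition \eqref{W} of $\alx$ and $\phi$.
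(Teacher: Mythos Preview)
Your proposal is correct. The core idea---picking a single peaking function $u\in P_x$ and comparing $\la u$ with $\la' u$---is exactly what the paper does as well. The only difference is in the packaging of the final step. The paper argues asymmetrically: from $\la_1 f,\la_2 f\in\SA$ it evaluates $\del(\la_j f)$ at the single point $\phi(x)$ to obtain the one-sided inequality $|\alx(\la_1)-\alx(\la_2)|\leq|\la_1-\la_2|$, and then gets the reverse inequality by applying the same reasoning to $\deli$ and invoking the relation $\alx^{-1}=\be_{\phi(x)}$ from Lemma~\ref{lem14}. You instead phrase both inequalities at once as the identity $\inf\set{\Vinf{f-g}:f\in\la V_x,\,g\in\la' V_x}=|\la-\la'|$ (and its analogue in $B$), and then transport this gap distance through $\del$ using the equality $\del(\la V_x)=\alx(\la)W_{\phi(x)}$ from \eqref{W}. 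Your route is marginally more self-contained in that it does not appeal to the explicit identification $\alx^{-1}=\be_{\phi(x)}$; the paper's route is marginally more direct in that it never needs to name the gap distance. Either way the argument is short and the substance is the same.
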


\begin{proof}
Let $x\in\cha$ and $\la_1,\la_2\in\T$.
Note that $\del(\la f)(\phi(x))=\alx(\la)$ for all $\la\in\T$ and $f\in V_x$ by \eqref{W}.
For each $f\in V_x$, we have
\begin{align*}
|\alx(\la_1)-\alx(\la_2)|
&=
|\del(\la_1f)(\phi(x))-\del(\la_2f)(\phi(x))|\\
&\leq
\Vinf{\del(\la_1f)-\del(\la_2f)}
=\Vinf{\la_1f-\la_2f}\\
&=
|\la_1-\la_2|.
\end{align*}
Hence, $|\alx(\la_1)-\alx(\la_2)|\leq|\la_1-\la_2|$. By applying the same argument to $\Delta^{-1}$ we deduce that $\be_y$ also is a contractive mapping. Having in mind that $\alx^{-1}=\be_{\phi(x)}$ (cf. Lemma~\ref{lem14}), we obtain that $\alx$ and $\be_{\phi(x)}$ are surjective isometries on $\T$.
\end{proof}

Fix $x\in\cha$. Since $\alx : \T \to \T$ is a surjective isometry on the unit sphere of the complex plane, and Tingley's problem admits a positive solution in this case, $\alx$ admits an extension to a surjective real linear isometry on $\mathbb{C}$, therefore one of the following statements hold: \begin{equation}\label{lem17}\hbox{ $\alx(\la)=\alx(1)\la$ for all $\la\in\T$, or
	$\alx(\la)=\alx(1)\ov{\la}$ for all $\la\in\T$.}
\end{equation}

One final technical result separates us from the main goal of this section.

\begin{lem}\label{lem18}
Let $f\in\SA$ and $x_0\in\cha$ be such that $|f(x_0)|<1$.
We set $\la=f(x_0)/|f(x_0)|$ if $f(x_0)\neq 0$,
and  $\la=1$ if $f(x_0)=0$.
For each $r$ with $0<r<1$, there exists $g_r\in V_{x_0}$ such that
$$
rf+(1-r|f(x_0)|)\la g_r\in\la V_{x_0}.
$$
\end{lem}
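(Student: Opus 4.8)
The plan is to dispose of the value at $x_0$ for free, reducing the statement to a norm inequality, and then to realize $g_r$ by means of a peaking function at $x_0$.

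Put $s:=|f(x_0)|$, so that $\ov\la f(x_0)=s$. For \emph{any} $g\in A$ with $g(x_0)=1$ one has
\[
\bigl(rf+(1-rs)\la g\bigr)(x_0)=rf(x_0)+(1-rs)\la=r\la s+(1-rs)\la=\la ,
\]
so the combination always takes the value $\la$ at $x_0$. Since $|\la|=1$ this forces $\Vinf{rf+(1-rs)\la g}\ge 1$; hence $rf+(1-rs)\la g\in\la V_{x_0}$ \emph{if and only if} $\Vinf{rf+(1-rs)\la g}\le 1$. Thus the entire task is to produce $g\in A$ with $g(x_0)=1$, $\Vinf g\le 1$ and $\Vinf{rf+(1-rs)\la g}\le 1$.

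For the construction I would fix a peaking function $u\in P_{x_0}$ and set
\[
g_r:=\frac{u-r\ov\la f\,u}{1-rs}=\frac{(1-r\ov\la f)\,u}{1-rs}\in A .
\]
Then $g_r(x_0)=\tfrac{1-rs}{1-rs}=1$, and using $\la\,\ov\la f=f$ one computes
\[
rf+(1-rs)\la g_r=rf(1-u)+\la u=\la\bigl(1-(1-u)(1-r\ov\la f)\bigr),
\]
so that for every $x\in X$
\[
\bigl|\bigl(rf+(1-rs)\la g_r\bigr)(x)\bigr|=\bigl|1-(1-u(x))(1-r\ov\la f(x))\bigr|,\qquad
|g_r(x)|=\frac{|u(x)|\,|1-r\ov\la f(x)|}{1-rs} .
\]
Using property $(2)$ of the Choquet boundary (page \pageref{properties of the Choquet boundary}) I would choose an open neighbourhood $O$ of $x_0$ small enough that $\eta:=\sup_{x\in O}|f(x)-f(x_0)|$ is as small as needed, and then a peaking function $u$ with $|u|$ correspondingly small on $X\setminus O$. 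On $X\setminus O$ both displayed quantities are $\le 1$: for $g_r$ one uses $|1-r\ov\la f|\le 1+r$ together with the smallness of $|u|$, and for the other $|rf(1-u)+\la u|\le r|f|\,|1-u|+|u|\le r(1+|u|)+|u|<1$ once $|u|$ is small enough (here $0<r<1$). On $O$, where $f$ is close to $f(x_0)=\la s$, the combination $rf+(1-rs)\la g_r$ equals $\la\bigl(rs+(1-rs)u\bigr)+r(1-u)(f-f(x_0))$, i.e. it is a perturbation of size $O(\eta)$ of $\la\bigl(rs+(1-rs)u\bigr)$, whose modulus is $\le 1$ with equality exactly where $u=1$; likewise $g_r$ differs from $u$ on $O$ only by a term of size $O(\eta)$.

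The main obstacle is precisely the estimate on $O$. Since the combination equals $\la$ — a point \emph{on} the unit sphere — at $x_0$, the crude perturbation bound only gives $1+O(\eta)$ on $O$, which is not enough: one has to show the combination stays in the \emph{closed} unit disc throughout $O$. This forces a quantitative choice of the pair $(O,u)$: the neighbourhood $O$ must be taken small depending on $r$ and $s$, and the peaking function $u$ must be chosen so that, near $x_0$, the way $u$ departs from the value $1$ dominates the oscillation of $f$ there in the appropriate sense. Granting such a choice, one expands
\[
\bigl|1-(1-u)(1-r\ov\la f)\bigr|^2=1-2\Re\bigl[(1-u)(1-r\ov\la f)\bigr]+\bigl|(1-u)(1-r\ov\la f)\bigr|^2
\]
and uses that $u(x)\in\ov{\mathbb D}$, i.e. $\Re\bigl(1-u(x)\bigr)\ge\tfrac12|1-u(x)|^2$, to conclude $\Vinf{rf+(1-rs)\la g_r}\le 1$; a parallel computation gives $\Vinf{g_r}\le 1$. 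Together with $g_r(x_0)=1$ this places $g_r$ in $V_{x_0}$ and finishes the argument. I expect this balancing, near $x_0$, of the decay of $u$ against the oscillation of $f$ to be the technical heart of the proof.
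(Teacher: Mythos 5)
Your reduction is correct: since the combination automatically takes the value $\la$ at $x_0$, everything comes down to producing $g\in A$ with $g(x_0)=1$, $\Vinf{g}\le 1$ and $\Vinf{rf+(1-r|f(x_0)|)\la g}\le 1$. But the construction from a \emph{single} peaking function $u$ does not deliver this, and the step you defer ("granting such a choice" of the pair $(O,u)$) is precisely where the proof lives. Writing $w=1-u(x)$ and $z=1-r\ov{\la}f(x)$, the inequality $|1-wz|\le 1$ is equivalent to $2\Re(wz)\ge |wz|^2$. The bound $\Re w\ge \tfrac12|w|^2$ controls only the real part of $w$, whereas the failure mode is driven by the term $-\Im w\,\Im z$: if at some $x$ near $x_0$ one has $u(x)=(1-\d)e^{i\varphi}$ with $\d\ll\varphi^2$ (so $w\approx \varphi^2/2-i\varphi$) while $\Im\bigl(\ov{\la}(f(x)-f(x_0))\bigr)$ has the unfavourable sign and magnitude $\gg \varphi$, then $\Re(wz)<0$ and $|1-wz|>1$. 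Property $(2)$ of the Choquet boundary gives no information whatsoever about the behaviour of $u$ \emph{inside} $O$ beyond $\Vinf{u}\le1$ and $u(x_0)=1$, so there is no way to rule this out, and for the same reason $\Vinf{g_r}\le 1$ can also fail (e.g.\ $|1-r\ov\la f(x)|>1-r|f(x_0)|$ at points where $|u(x)|$ is still essentially $1$). What is really needed is a function whose modulus at \emph{every} point is controlled by the oscillation $|f(x)-f(x_0)|$ at that same point --- a condition coupling the level sets of $g_r$ to those of $f$, which one application of the Urysohn-type property cannot produce.

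The paper achieves exactly this coupling by a different construction: it partitions the set where $f\neq f(x_0)$ into dyadic annuli $F_m=\set{x: (1-|f(x_0)|)/2^{m+1}\le |f(x)-f(x_0)|\le (1-|f(x_0)|)/2^{m}}$ (plus $F_0$), chooses for each $m$ a peaking function $f_m\in P_{x_0}$ with $|f_m|<\fr{1-r}{1-r|f(x_0)|}$ on $F_m$ (each $F_m$ is closed and misses $x_0$, so property $(2)$ applies), and sets $g_r=f_0\sum_{m\ge1}f_m/2^m$. The resulting decay of $|g_r(x)|$, calibrated to which annulus $x$ lies in, makes the \emph{plain triangle inequality} $|h_r(x)|\le r|f(x)|+(1-r|f(x_0)|)|g_r(x)|$ already give $|h_r(x)|\le 1$; no cancellation and no control of arguments is required. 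I would suggest you abandon the identity $rf+(1-rs)\la g_r=\la\bigl(1-(1-u)(1-r\ov\la f)\bigr)$ and instead aim directly at an absolute-value estimate with a $g_r$ built over such a dyadic decomposition.
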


\begin{proof}
Note first that $1-|f(x_0)|>0$. We set
$$\begin{aligned}
F_0&=\left\{x\in X:|f(x)-f(x_0)|\geq\fr{1-|f(x_0)|}{2}\right\},
\q\mbox{and} \\
F_m&=\left\{x\in X:\fr{1-|f(x_0)|}{2^{m+1}}\leq|f(x)-f(x_0)|\leq\fr{1-|f(x_0)|}{2^m}\right\}
\end{aligned}$$
for each $m\in\N$.
We see that $F_n$ is a closed subset of $X$ with $x_0\not\in F_n$ for all
$n\in\N\cup\set{0}$.
Since $x_0\in\cha$, for each $n\in\N\cup\set{0}$
there exists $f_n\in P_{x_0}$ such that
\begin{equation}\label{lem18.1}
|f_n|<\fr{1-r}{1-r|f(x_0)|}
\q\mbox{on $F_n$.}
\end{equation}
We set $g_r=f_0\sum_{n=1}^\infty f_n/2^n$ (we note that the series converges in $A$).
We observe that
$$
1=g_r(x_0)\leq\Vinf{g_r}\leq\Vinf{f_0}\,\sum_{n=1}^\infty\fr{\Vinf{f_n}}{2^n}=1,
$$
and hence $g_r\in V_{x_0}$.
Set $h_r=rf+(1-r|f(x_0)|)\la g_r\in A$. We shall prove that $h_r\in\la V_{x_0}$.
Since $g_r(x_0)=1$ and $f(x_0)=|f(x_0)|\la$, we have $h_r(x_0)=\la$.
Take $x\in X$ arbitrarily.
To prove that $|h_r(x)|\leq1$, we will consider three cases.
If $x\in F_0$, then \eqref{lem18.1} shows that
$$
|g_r(x)|\leq|f_0(x)|\,\sum_{n=1}^\infty\fr{|f_n(x)|}{2^n}<\fr{1-r}{1-r|f(x_0)|}.
$$
We obtain
$$
|h_r(x)|\leq r |f(x)| + (1-r|f(x_0)|) |\la g_r(x)|<r+(1-r)=1.
$$
Hence, $|h_r(x)|<1$ if $x\in F_0$.\smallskip

Suppose that $x\in F_m$ for some $m\in\N$.
Then $|f(x)-f(x_0)|\leq(1-|f(x_0)|)/2^m$
by the choice of $F_m$.
We get
$$
|f(x)|\leq|f(x_0)|+\fr{1-|f(x_0)|}{2^m}
=\left(1-\fr{1}{2^m}\right)|f(x_0)|+\fr{1}{2^m}.
$$
We derive from \eqref{lem18.1} that
\begin{align*}
|g_r(x)|
&\leq
|f_0(x)|\left(\fr{|f_m(x)|}{2^m}+\sum_{n\neq m}\fr{|f_n(x)|}{2^n}\right)\\
&<
\fr{1}{2^m}\,\fr{1-r}{1-r|f(x_0)|}+1-\fr{1}{2^m}.
\end{align*}
It follows that
$$
|(1-r|f(x_0)|)\la g_r(x)|
<\fr{1-r}{2^m}+\left(1-\fr{1}{2^m}\right)(1-r|f(x_0)|).
$$
We infer from these inequalities that
\begin{align*}
|h_r(x)|
&\leq
r|f(x)|+|(1-r|f(x_0)|)\la g_r(x)|\\
&<
r\left(1-\fr{1}{2^m}\right)|f(x_0)|+\fr{r}{2^m}
+\fr{1-r}{2^m}+\left(1-\fr{1}{2^m}\right)(1-r|f(x_0)|)\\
&=
1,
\end{align*}
and hence, $|h_r(x)|<1$ for $x\in\cup_{n=1}^\infty F_n$.\smallskip

Now we consider the case in which  $x\not\in\cup_{n=0}^\infty F_n$.
Then $x\in\cap_{n=0}^\infty(X\setminus F_n)$, which implies that $f(x)=f(x_0)$.
We have
$$
|h_r(x)|\leq r|f(x_0)|+1-r|f(x_0)|=1,
$$ and we thus conclude that $|h(x)|\leq1$ for all $x\in X$, and consequently
$h_r\in\la V_{x_0}$.
\end{proof}

We have alrady gathered the tools to prove Theorem~\ref{thm1}.

\begin{proof}[Proof of Theorem~\ref{thm1}]
Let $f\in\SA$ and $y\in\chb$. To simplify the notation we set $x=\psi(y)$ and $\la=f(x)/|f(x)|\in\T$ if $f(x)\neq0$,
and $\la=1$ if $f(x)=0$,
where $\psi=\phi^{-1}$ as in Lemma~\ref{lem14}.
We first prove that $|\del(f)(y)|=|f(x)|$.
If $|f(x)|=1$, then $f\in\la V_x$ and thus
\[
|\del(f)(y)|=|\del(f)(\phi(x))|=|\alx(\la)|=1=|f(x)|
\]
by \eqref{W}.
We need to consider the case when $|f(x)|<1$.
By Lemma~\ref{lem18}, for each $r$ with $0<r<1$ there exists $g_r\in V_x$
such that $h_r=rf+(1-r|f(x)|)\la g_r\in\la V_x$.
We obtain
\begin{align*}
\Vinf{h_r-f}
&=
\Vinf{(r-1)f+(1-r|f(x)|)\la g_r}\\
&\leq
(1-r)+1-r|f(x)|
=2-r-r|f(x)|.
\end{align*}
Since $h_r\in\la V_x$,
we have $\del(h_r)(y)=\del(h_r)(\phi(x))=\alx(\la)$ by \eqref{W}.
Therefore, we get
\begin{align*}
1-|\del(f)(y)|
&=
|\alx(\la)|-|\del(f)(y)|
\leq
|\alx(\la)-\del(f)(y)|\\
&=
|\del(h_r)(y)-\del(f)(y)|\\
&\leq
\Vinf{\del(h_r)-\del(f)}=\Vinf{h_r-f}\\
&\leq2-r-r|f(x)|.
\end{align*}
Since $r$ with $0<r<1$ is arbitrary, we get
\begin{equation}\label{thm1.1}
1-|\del(f)(y)|
\leq|\alx(\la)-\del(f)(y)|
\leq1-|f(x)|,
\end{equation}
which shows that $|f(x)|\leq|\del(f)(y)|=|\del(f)(\phi(x))|$.
By similar arguments, applied to $\deli$ instead of $\del$, we have
$|u(y)|\leq|\deli(u)(\psi(y))|$ for all $u\in\SB$.
In particular, $|\del(f)(y)|\leq|\deli(\del(f))(\psi(y))|=|f(x)|$, and consequently
$$|\del(f) (y)|=|f(\psi (y))|, \hbox{ for all } y\in \chb, \ f\in A.$$

We shall prove that \begin{equation}\label{eq alphax module of f} \del(f)(y)=\alx(\la)|f(x)|.
\end{equation}
Since $|\del(f)(y)|=|f(x)|$, we need to consider the case when $\del(f)(y)\neq0$.
It follows from \eqref{thm1.1} that
\begin{align*}
1
&=
|\alx(\la)|
\leq|\alx(\la)-\del(f)(y)|+|\del(f)(y)|\\
&\leq
(1-|f(x)|)+|f(x)|=1,
\end{align*}
which shows that
$$
|\alx(\la)|
=|\alx(\la)-\del(f)(y)|+|\del(f)(y)|.
$$
By the equality condition for the triangle inequality,
there exists $t\geq0$ such that
$\alx(\la)-\del(f)(y)=t\del(f)(y)$.
Hence, we have $\del(f)(y)=\alx(\la)/(1+t)$.
On the other hand,
$$
|f(x)|=|\del(f)(y)|
=\left|\fr{\alx(\la)}{1+t}\right|=\fr{1}{1+t},
$$
which yields $\del(f)(y)=\alx(\la)|f(x)|$.\smallskip

Now, having in mind \eqref{lem17}, we define two subsets $\KP$ and $\KM$ of $\cha$ by
$$\begin{aligned}
\KP& =\set{x\in\cha:\alx(\lambda )= \alx(1) \lambda, \hbox{ for all } \lambda},
\q\mbox{ and } \\
\KM&=\set{x\in\cha:\alx(\lambda)= \alx(1) \overline{\lambda} \hbox{ for all } \lambda}.
\end{aligned}$$
We see that $\cha$ is the disjoint union of $\KP$ and $\KM$ (cf. \eqref{lem17}).
Recall that $\la=f(x)/|f(x)|$ if $f(x)\neq0$, and $\la=1$ if $f(x)=0$.
We derive from \eqref{eq alphax module of f} that
\begin{equation}\label{eq Delta f is a weighted composition} \begin{aligned}
		\del(f)(y)
		&=
		\alx(\la)|f(x)|=
		\begin{cases}
			\alx(1)f(x),& \hbox{ if } x\in\KP\\[2mm]
			\alx(1)\ov{f(x)},& \hbox{ if } x \in\KM
		\end{cases}\\
		&=
		\begin{cases}
			\al_{\psi(y)}(1)f(\psi(y)),&\hbox{ if } y\in\psi^{-1}(\KP)\\[2mm]
			\al_{\psi(y)}(1)\ov{f(\psi(y))},& \hbox{ if } y\in\psi^{-1}(\KM)
		\end{cases}
	\end{aligned}
\end{equation}

Set $\LP=\psi^{-1}(\KP)$ and $\LM=\psi^{-1}(\KM)$.
We deduce from the bijectivity of $\psi$ that
$\chb$ is the disjoint union of $\LP$ and $\LM$.
Consider finally, the positive homogenous extension  $T\colon A\to B$ defined by
$$
T(g)=
\begin{cases}
\D\Vinf{g}\,\del\left(\fr{g}{\Vinf{g}}\right),& \hbox{ if } g\in A\setminus\set{0}\\[2mm]
0,& \hbox{ if } g=0
\end{cases}
$$ Clearly, $T$ is a surjective mapping.
The identity in \eqref{eq Delta f is a weighted composition} shows that
$$
T(g)(y)=
\begin{cases} \al_{\psi(y)}(1) g(\psi(y)),& \hbox{ if } y\in\LP\\[2mm]
 \al_{\psi(y)}(1) \ov{g(\psi(y))},& \hbox{ if } y\in\LM
\end{cases}
\qq(g\in A).
$$
Since $\psi\colon\chb\to\cha$ is bijective, the previous identity shows that $T$ is a surjective isometry. Namely, pick $g_1,g_2$ in $A$. It follows from the previous identity and the surjectivity of $\psi$ that $$\begin{aligned}
		\|T(g_1) - T(g_2)\| &= \sup_{y\in \chb} | (T(g_1) - T(g_2)) (y)| = \sup_{y\in \chb} | (g_1 - g_2) \psi(y)| \\
		& = \sup_{x\in \cha} | (g_1 - g_2) (x)| = \|g_1-g_2\|
	\end{aligned},$$ where in the first and fourth equalities we applied that $\chb$ and $\cha$ are norming sets for $B$ and $A$, respectively (see page \pageref{properties of the Choquet boundary}).
Therefore $T$ is a real linear isometry by the Mazur--Ulam theorem.
\end{proof}

The final argument in the proof of Theorem \ref{thm1} can be also deduced from \cite[Lemma 6]{MoriOza2018} or \cite[Lemma 2.1]{FangWang06}, the identity in \eqref{eq Delta f is a weighted composition} and the fact that Choquet boundaries are boundaries, and thus norming sets.\smallskip

Although we do not make any use of the maximal convex subsets of the unit sphere of a uniformly closed function algebra, nor of the deep result asserting that a surjective isometry between the unit spheres of two Banach spaces maps maximal convex subsets to maximal convex subsets (see \cite[Lemma 5.1]{ChenDong2011} and \cite[Lemma 3.5]{Tanaka2014}),  the conclusion in \cite[Lemma 3.3]{Tan2016Mn} (see also \cite[Lemma 3.1]{HatOiTog}) can be applied to deduce that every maximal convex subset $\mathcal{C}$ of the unit sphere of uniformly closed function algebra $A$ on a locally compact Hausdorff space $X$ is of the form $$\mathcal{C} = \la V_x=\set{f\in\SA:f(x)=\la}, $$ for some $\la\in\T$ and $x\in\cha$ (this can be compared with \cite[Lemma 3.2]{HatOiTog}).

\section{Tingley's problem for commutative JB$^*$-triples}\label{sec: abelian JBstar triples}

Despite of having their own right to be studied as main protagonists, there exist certain function spaces which are also employed in other branches. An example appears in the Gelfand representation for commutative JB$^*$-triples. As a brief introduction we shall mention that these complex spaces arose in holomorphic theory, in the study and classification of bounded symmetric domains in arbitrary complex Banach spaces. These domains are the appropriate substitutes of simply connected domains to extend the Riemann mapping theorem to dimension greater than or equal to 2 (cf. \cite{Ka83} or the detailed presentation in \cite[\S 5.6]{Cabrera-Rodriguez-vol2}).\smallskip

For the sake of brevity, we shall omit a detailed presentation of the theory for general JB$^*$-triples. However, for the purpose of this note, it is worth recalling that by the Gelfand theory of JB$^*$-triples, the elements in the subclass of commutative JB$^*$-triples can be represented as spaces of continuous functions (cf. \cite{Ka83}, \cite{Zettl83}, \cite[\S 3]{BurPeRaRu2010}, \cite[\S 4.2.1]{Cabrera-Rodriguez-vol1}). Indeed, let $X$ be a subset of a Hausdorff locally convex complex space such that $0 \notin X$, $X \cup \{0\}$ is compact, and $\mathbb{T} X \subseteq X$, where $\mathbb{T} := \{\lambda \in \mathbb{T} : |\lambda| = 1\}$. Let us observe that under these hypotheses, $ \lambda x = \mu x$ for $x\in X$, $\lambda, \mu \in \mathbb{T}$ implies $\lambda = \mu$. The space $X$ is called a \emph{principal $\mathbb{T}$-bundle} in \cite{Ka83}.\smallskip

A locally compact $\mathbb{T}_{\sigma}$-space is a locally compact Hausdorff space $X$ together with a continuous mapping $\mathbb{T} \times X \to  X,$ $(\lambda, t) \mapsto \lambda t$, satisfying $\lambda (\mu t) = (\lambda \mu) t$ and $1 t = t$,  for all $\lambda, \mu \in \mathbb{T}$, $t\in X$. Each principal $\mathbb{T}$-bundle {$X$}
is a locally compact $\mathbb{T}_{\sigma}$-space. We can extend the product by elements in $\mathbb{T}$ to the one-point compactification $X\cup \{\omega\}$ of $X$ by setting $\lambda \omega = \omega$ ($\lambda \in \mathbb{T}$).
We now consider the following subspace of continuous functions on a locally compact $\mathbb{T}_{\sigma}$-space $X$
$$C^{\mathbb{T}}_0 (X) := \{ a \in C_0(X) : a (\lambda t) = \lambda a(t) \hbox{ for every } (\lambda, t) \in \mathbb{T}\times X\}.$$ We shall regard $C^{\mathbb{T}}_0 (X)$ as a norm closed subspace of $C_0 (X)$ with the supremum norm. We observe that every $C_0(L)$ space is a $C^{\mathbb{T}}_0 (X)$ space (cf. \cite[Proposition 10]{Ol74}). However, there exist examples of principal $\mathbb{T}$-bundles $X$ for which the space $C^{\mathbb{T}}_0 (X)$ is not isometrically isomorphic to a $C_0(L)$ space (cf. \cite[Corollary 1.13 and subsequent comments]{Ka83}). $C^{\mathbb{T}}_0 (X)$ spaces, with $X$ a locally compact $\mathbb{T}_{\sigma}$-space, are directly related to Lindenstrauss spaces (see \cite[Theorem 12]{Ol74}).\smallskip

Let us now fix a locally compact $\mathbb{T}_{\sigma}$-space  $X$.  Although $C^{\mathbb{T}}_0 (X)$ need not be a subalgebra of $C_0 (X)$, it is closed for the (pointwise) triple product defined by $\{a,b,c\}:= a {b}^* c= a \overline{b} c$  ($a,b,c\in C^{\mathbb{T}}_0 (X)$). We shall write $a^{[1]}=a$, $a^{[3]}= \{a,a,a\}$ and $a^{[2n +1]} = \{a,a,a^{[2n-1]}\}$ for all natural $n$. For each $t_0\in X$, the mapping $\delta_{t_0} : C^{\mathbb{T}}_0 (X)\to \mathbb{C},$ $\delta_{t_0} (a) = a(t_0)$ is a functional in the closed unit ball of the dual of $C^{\mathbb{T}}_0 (X)$. Motivated by the classical results for $C_0(L)$ spaces the reader might think that each  $\delta_{t_0}$ is an extreme point of the closed unit ball of $C^{\mathbb{T}}_0 (X)^*,$ however this is not always the case, some elements $t_0\in X$ produce zero functionals. For example if $t_0\in X$ satisfies that $t_0 \in (\mathbb{T}\backslash \{1\}) t_0$ (i.e. $t_0$ is in the $\mathbb{T}$-orbit of itself by an element which is not $1$), it is easy to check that $\delta_{t_0} =0$ as a functional in $ C^{\mathbb{T}}_0 (X)^*$. By \cite[Lemma 11]{Ol74} the extreme points of the closed unit ball of $C^{\mathbb{T}}_0 (X)^*$ are precisely those $\delta_{t_0}$ which are non-zero, that is,
\begin{equation}\label{equ extreme point in the dual ball} \partial_e\left(\mathcal{B}_{C^{\mathbb{T}}_0 (X)^*} \right) = \{ \delta_{t_0} : t_0\notin (\mathbb{T}\backslash \{1\}) t_0\}.
\end{equation}Henceforth the extreme points of the closed unit ball, $\mathcal{B}_{E}$, of a Banach space $E$ will be denoted by $\partial_e(\mathcal{B}_{E})$. Clearly, the set $\partial_e\left(\mathcal{B}_{C^{\mathbb{T}}_0 (X)^*} \right) $ is norming and a kind of Choquet boundary for $C^{\mathbb{T}}_0 (X)$.\smallskip

Those complex Banach spaces called JB$^*$-triples are precisely the complex Banach spaces whose unit ball is a bounded symmetric domain, and were introduced by W. Kaup in \cite{Ka83} to classify these domains, and to establish a generalization of Riemann mapping theorem in dimension $\geq 2$. A JB$^*$-triple is a complex Banach space $E$ admitting a continuous triple product $\{ \cdot,\cdot,\cdot\} :
E\times E\times E \to E,$ which is symmetric and linear in the outer variables, conjugate-linear in the middle one, and satisfies the following axioms:
\begin{enumerate}\item[$(a)$] $L(a,b) L(x,y) = L(x,y) L(a,b) + L(L(a,b)x,y)
	- L(x,L(b,a)y),$  for all $a,b,x,y$ in $E$,
	where $L(a,b)$ is the operator on $E$ given by $L(a,b) x = \{a,b,x\};$
	\item[$(b)$] For all $a\in E$, $L(a,a)$ is a hermitian operator with non-negative
	spectrum;
	\item[$(c)$] $\|\{a,a,a\}\| = \|a\|^3$, for all $a\in E$.\end{enumerate}
The class of JB$^*$-triples includes all C$^*$-algebras and all JB$^*$-algebras (cf. \cite[pages 522, 523 and 525]{Ka83}). \smallskip

A JB$^*$-triple $E$ is called \emph{abelian} or \emph{commutative} if the set $\{L(a,b) : a,b\in E\}$ is a commutative subset of the space $\mathcal{B}(E)$ of all bounded linear operators on $E$ (cf. \cite[\S 1]{Ka83}, \cite[\S 4.1.47]{Cabrera-Rodriguez-vol1} or \cite[\S 4]{FriRu83representation} where commutative JB$^*$-triples are called ``associative''). Despite of the technical definition, every commutative JB$^*$-triple can be isometrically represented, via a triple isomorphism (that is, a linear bijection preserving the triple product), as a space of the form $C^{\mathbb{T}}_0 (X)$ for a suitable principal $\mathbb{T}$-bundle $X$ (see \cite[Corollary 1.1]{Ka83}, \cite[Theorem 4.2.7]{Cabrera-Rodriguez-vol1}, see also the interesting representation theorems in \cite[\S 3]{FriRu82contractive} and \cite[\S 4]{FriRu83representation}).\smallskip

Let $X$ and $Y$ be two (locally compact and Hausdorff) principal $\mathbb{T}$-bundles. Each surjective linear isometry $T$ from $C^{\mathbb{T}}_0 (X)$ onto $C^{\mathbb{T}}_0 (Y)$ is a triple isomorphism (i.e., it preserves the triple product seen above). Furthermore, that is the case, if and only if, there exists a $\mathbb{T}$-equivariant homeomorphism $\phi : Y\to X$ (i.e., $\phi (\lambda s) = \lambda \phi (s)$, for all  $(\lambda, s) \in \mathbb{T}\times Y$) such that $T(a) (s) = a(\phi(s))$, for all $s\in Y$ and $a\in C^{\mathbb{T}}_0 (X)$ (see \cite[Proposition 1.12]{Ka83}). That is, surjective linear isometries and triple isomorphisms coincide, and they are precisely the composition operators with a $\mathbb{T}$-equivariant homeomorphism between the principal $\mathbb{T}$-bundles. 
\smallskip

In some of the result of this section, we can apply tools and techniques in the theory of general JB$^*$-triples. However, since the commutative objects of this category admit a concrete representation as function spaces, we strive for presenting basic arguments which do not require any knowledge on the general theory.\smallskip

Our next goal will consist in determining the explicit form of all real linear isometries between $C_0^{\mathbb{T}}(X)$ spaces for principal $\mathbb{T}$-bundles (i.e. commutative JB$^*$-triples), a description which materializes and concretizes the theoretical conclusions for real linear surjective isometries on C$^*$-algebras and JB$^*$-triples in \cite{ChuDangRussoVentura93,Dang92}.\smallskip

Let us begin by determining the triple ideals of an abelian JB$^*$-triple $C_0^{\mathbb{T}} (X)$ which are $M$-summands. This basic theory is probably known by experts but we have been unable to find a explicit reference. So, we strove for presenting a complete argument with auxiliary references. The reference book on $M$-summands and $M$-ideals is \cite{HarWerWerBookMideals}. A linear projection $P$ on a real or complex Banach space $E$ is called an \emph{$M$-projection} if $$\| x\|= \max\{\|P(x)\|, \|x - P(x)\|\}\hbox{,\quad for all  } x \in  E.$$ A closed subspace $F\subseteq E$ is called an \emph{$M$-summand} if it is the range of an $M$-projection. For a locally compact Hausdorff space $L$, the $M$-summands of $C_0(L)$ are described in  \cite[Example I.1.4$(a)$ and Lemma I.1.5]{HarWerWerBookMideals}, and they correspond to subspaces of the form $$I = \{a\in C_0(L): a(t) = 0 \hbox{ for all } t\in D\},$$ where $D$ is a clopen subset of $L$. Similar arguments to those used in the quoted reference can be applied to deduce our next lemma.

\begin{lem}\label{l Msummands in C0T spaces} Let $X$ be a principal $\mathbb{T}$-bundle. Then the $M$-summands of $C_0^{\mathbb{T}} (X)$ are precisely the subspaces of the form $$I = \{a\in C_0^{\mathbb{T}}(X): a(t) = 0 \hbox{ for all } t\in D\},$$ where $D$ is a $\mathbb{T}$-invariant  {\rm(}i.e. $\mathbb{T} D = D${\rm)} clopen subset of $X$.
	
\end{lem}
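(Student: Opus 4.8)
The plan is to mimic the classical description of $M$-summands in $C_0(L)$, adapting it to the $\mathbb{T}$-equivariant setting. First I would verify the easy direction: if $D\subseteq X$ is a $\mathbb{T}$-invariant clopen subset, then the subspace $I=\{a\in C_0^{\mathbb{T}}(X): a|_D\equiv 0\}$ is an $M$-summand. The $M$-projection should be $P(a)=\chi_{X\setminus D}\cdot a$; the point is that this really lands in $C_0^{\mathbb{T}}(X)$. Because $D$ is clopen, $\chi_{X\setminus D}$ is continuous on $X$, so $P(a)\in C_0(X)$; because $D$ is $\mathbb{T}$-invariant, $\chi_{X\setminus D}(\lambda t)=\chi_{X\setminus D}(t)$, hence $P(a)(\lambda t)=\chi_{X\setminus D}(t)a(\lambda t)=\lambda P(a)(t)$, so $P(a)\in C_0^{\mathbb{T}}(X)$. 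The $M$-projection identity $\|a\|=\max\{\|P(a)\|,\|a-P(a)\|\}$ is then immediate from the disjointness of the supports of $P(a)$ and $a-P(a)$.

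For the converse, I would argue that every $M$-summand of $C_0^{\mathbb{T}}(X)$ has this form. One clean route is to pass through the one-point compactification and the known description for $C_0$: identify $C_0^{\mathbb{T}}(X)$ inside $C_0(X)$, note that $C_0(X)$ itself is a $C_0$ of a locally compact Hausdorff space whose $M$-summands are exactly the subspaces cut out by clopen sets (by \cite[Example I.1.4$(a)$ and Lemma I.1.5]{HarWerWerBookMideals}). Alternatively, and perhaps more self-containedly, I would use the characterization \eqref{equ extreme point in the dual ball} of the extreme points of the dual ball: an $M$-summand $F$ and its complementary $M$-summand $F'$ induce a partition $\partial_e(\mathcal{B}_{C_0^{\mathbb{T}}(X)^*}) = (\partial_e\cap F^\perp)\sqcup(\partial_e\cap F'^{\perp})$ by the general theory of $M$-ideals (each extreme point of the dual ball is "supported" on one summand). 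Writing $\Lambda=\{t\in X : t\notin(\mathbb{T}\setminus\{1\})t\}$ for the set of points giving nonzero evaluation functionals, this yields a partition $\Lambda = \Lambda_0\sqcup\Lambda_1$ with $F=\{a : a(t)=0\ \forall t\in\Lambda_0\}$ and $F'=\{a : a(t)=0\ \forall t\in \Lambda_1\}$. One then shows $\Lambda_0$ (suitably saturated under the $\mathbb{T}$-action and together with the points where all evaluations vanish) is of the form $D\cap\Lambda$ for a $\mathbb{T}$-invariant clopen $D\subseteq X$: clopenness comes from the fact that an $M$-summand is the range of a projection by an element behaving like an idempotent/characteristic function, and $\mathbb{T}$-invariance is forced because $a(t)=0\iff a(\lambda t)=0$ for $a\in C_0^{\mathbb{T}}(X)$.

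Concretely, I would produce the clopen set $D$ as follows. Let $P$ be the $M$-projection onto the given $M$-summand $F$. For a fixed $t_0\in X$ with $\delta_{t_0}\neq 0$, the evaluation $a\mapsto P(a)(t_0)$ is a scalar multiple $c(t_0)\,\delta_{t_0}$ of $\delta_{t_0}$ on the dual side, and since $P$ is idempotent with $\|P\|\le 1$ one gets $c(t_0)\in\{0,1\}$. Set $D=\{t\in X : c(t)=0\}$ (extending by the obvious convention on the degenerate $\mathbb{T}$-orbit points and on $\omega$). Continuity of $P$ forces $c$ to be continuous into $\{0,1\}$, hence $D$ is clopen; the relation $P(a)(\lambda t)=\lambda P(a)(t)$ forces $c(\lambda t)=c(t)$, hence $D$ is $\mathbb{T}$-invariant; and by construction $F=\ker P^{\perp}\cap\dots = \{a : a(t)=0\ \forall t\in D\}$.

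The main obstacle, and the place requiring care rather than difficulty, is the presence of the "degenerate" points $t_0$ with $t_0\in(\mathbb{T}\setminus\{1\})t_0$, for which $\delta_{t_0}=0$. On such points every function of $C_0^{\mathbb{T}}(X)$ vanishes automatically, so they contribute nothing to separating $M$-summands, yet the clopen set $D$ in the statement is a subset of $X$ (not of the set $\Lambda$ of non-degenerate points), so one must decide how to assign these points to $D$ or its complement. The natural fix is to note that the set $Z=\{t : \delta_t=0\}$ is $\mathbb{T}$-invariant and, one checks, closed (it is the set where $\lambda t = t$ for some $\lambda\ne 1$, a condition stable under limits since the $\mathbb{T}$-action is continuous and $X\cup\{0\}$ is compact); whether it is open is not needed because any $\mathbb{T}$-invariant clopen subset of $\Lambda$ extends to a $\mathbb{T}$-invariant clopen subset of $X$ by adjoining an appropriate union of $\mathbb{T}$-orbits from $Z$ — and changing $D$ within $Z$ does not change the subspace $I$. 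Making this extension canonical (e.g. declaring $c\equiv 0$ on $Z$, so $Z\subseteq D$) is the one bookkeeping step one must not skip, but it is routine, and with it the two inclusions close up and the lemma follows.
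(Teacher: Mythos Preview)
Your approach is essentially the paper's: use the $\ell_1$-decomposition of the dual induced by an $M$-projection, observe that every extreme point $\delta_t$ of the dual ball must fall entirely into one $L$-summand, and let $D$ be the set of $t$'s landing in the annihilator of $I$. The paper packages this slightly differently---it sets $D=\{t:\delta_t\in I^\circ\}$, uses Hahn--Banach and Krein--Milman to get $I=I_D$, repeats for the complementary summand $J$ to get $\widehat D$, and then notes that $D$ and $\widehat D$ are disjoint closed sets whose union is $X$, hence each is clopen---but this is the same idea as your $c(t)\in\{0,1\}$ formulation. You also supply the easy direction (that every such $I_D$ really is an $M$-summand), which the paper leaves implicit.

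Your ``main obstacle'' is, however, a phantom. The lemma is stated for a \emph{principal} $\mathbb{T}$-bundle, and the paper recalls just before the lemma that in this case $\lambda x=\mu x$ forces $\lambda=\mu$; in particular the set $Z=\{t:t\in(\mathbb{T}\setminus\{1\})t\}$ is empty, and \emph{every} $\delta_t$ is a nonzero extreme point of the dual ball. This is exactly the first sentence of the paper's proof. So $\Lambda=X$, your bookkeeping paragraph about degenerate orbits is vacuous, and the proof simplifies accordingly. (Your concern would be relevant for a general locally compact $\mathbb{T}_\sigma$-space, but that is not the hypothesis here.)
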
	

\begin{proof} Since $X$ is a principal $\mathbb{T}$-bundle, the extreme points of the closed unit ball of $C_0^{\mathbb{T}} (X)^*$ are those in the set $\{\delta_{t_0} : t_0 \in X\}$ (cf. \eqref{equ extreme point in the dual ball}). Suppose $I$ is a closed subspace of $C_0^{\mathbb{T}} (X)$ which is an $M$-summand for an $M$-projection $P$. Let $J = (Id-P)(C_0^{\mathbb{T}} (X))$. Clearly $C_0^{\mathbb{T}}(X) = I \oplus^{\ell_\infty} J,$ and $C_0^{\mathbb{T}}(X)^* = J^{\circ} \oplus^{\ell_1} I^{\circ}$, where $I^{\circ}$ stands for the polar of $I$ in $C_0^{\mathbb{T}} (X)^*$, and similarly for $J$. Let us define $D:= \{t\in X : \delta_{t}\in I^{\circ}\}$. Clearly, $D$ is a $\mathbb{T}$-invariant closed subset of $X$ and $I\subseteq I_{D} :=\{a\in C_0^{\mathbb{T}} (X) : a (t) =0 \hbox{ for all } t\in D\}$.   The equality  $I= I_{D}$ will follow from the Hahn-Banach and Krein-Milman theorems as soon as we prove that $\partial_e(\mathcal{B}_{I^{\circ}})\subseteq I_{D}^{\circ}$, but this is clear from the well known fact that $$X\equiv \partial_e \left(\mathcal{B}_{C_0^{\mathbb{T}} (X)^*} \right) = \partial_e \left(\mathcal{B}_{I^{\circ}} \right) \cup \partial_e \left(\mathcal{B}_{J^{\circ}} \right).$$ 	
	
Similarly, $J =	\{a\in C_0^{\mathbb{T}} (X) : a (t) =0 \hbox{ for all } t\in \widehat{D}\}$ for a $\mathbb{T}$-invariant closed subset $\widehat{D}$ of $X$. Since $D$ and $\widehat{D}$ are disjoint with $D \cup \widehat{D} = X,$ we deduce that $D$ is clopen.
\end{proof}

It should be noticed that the set $D$ in the previous lemma might be compact.\smallskip

Let us now determine the surjective real linear isometries between abelian JB$^*$-triples.

\begin{lem}\label{l form of surjective real linear isometries between abelian JBstar triples} Let $X$ and $Y$ be two principal $\mathbb{T}$-bundles. Then for each surjective real linear isometry
	$T : C^{\mathbb{T}}_0 (X) \to C^{\mathbb{T}}_0 (Y)$ there exist a $\mathbb{T}$-invariant clopen subset $D\subseteq X$ and a $\mathbb{T}$-equivariant homeomorphism $\phi : Y\to X$ satisfying $$T(a) (s) = a(\phi(s)), \text{ for all } a\in C^{\mathbb{T}}_0 (X) \hbox{ and for all } s\in \phi^{-1}(D),$$ and $$T(a) (s) = \overline{a(\phi(s))}, \text{ for all } a\in C^{\mathbb{T}}_0 (X) \hbox{ and for all } s\in \phi^{-1}(X\backslash D).$$   Moreover, the set $\tilde{D}:= \phi^{-1}(D)$ is $\mathbb{T}$-invariant and clopen in $Y$, $T_1:= T|_{C^{\mathbb{T}}_0 (D)} : C^{\mathbb{T}}_0 (D) \to C^{\mathbb{T}}_0 (\tilde{D})$ is a complex linear surjective isometry given by the composition operator of the mapping $\phi|_{\tilde{D}}$, $T_2:= T|_{C^{\mathbb{T}}_0 (X\backslash D)} : C^{\mathbb{T}}_0 (X\backslash D) \to C^{\mathbb{T}}_0 (Y\backslash \tilde{D})$ is a conjugate-linear surjective isometry given by the pointwise conjugation and the composition operator of the mapping $\phi|_{Y\backslash \tilde{D}}$, $C^{\mathbb{T}}_0 (X) = C^{\mathbb{T}}_0 (D) \oplus^{\ell_{\infty}} C^{\mathbb{T}}_0 (X\backslash D),$ and $T = T_1 \oplus T_2$.
\end{lem}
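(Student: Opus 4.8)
The plan is to reduce the real-linear case to the known complex-linear case (Kaup's Proposition 1.12) via a standard ``Cartan factor/Peirce-type'' splitting argument. Let $T : C^{\mathbb{T}}_0 (X) \to C^{\mathbb{T}}_0 (Y)$ be a surjective real linear isometry. The first step is to recall that a surjective real linear isometry between JB$^*$-triples is automatically a \emph{real triple isomorphism}, i.e. it preserves the real triple product; this follows from the metric characterization of the triple product (the Kaup--Banach-Stone type theorem for real JB$^*$-triples, or directly from the fact that $\|a^{[3]}\| = \|a\|^3$ together with the holomorphic rigidity of the unit ball). Concretely, for our function spaces one can verify that $T$ preserves the set of ``tripotents'' (the norm-one elements $e$ with $e^{[3]} = e$, which here are exactly the functions of modulus $0$ or $1$) and the partial order among them, which already forces $T$ to be ``locally'' either complex linear or conjugate linear.

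Next I would produce the clopen decomposition. Consider the complexification trick: define $\overline{T} : C^{\mathbb{T}}_0 (X)\to C^{\mathbb{T}}_0 (Y)$ by $\overline{T}(a) = \overline{T(\overline{a})}$ (note $\overline{a}\in C^{\mathbb{T}}_0(X)$ whenever $a\in C^{\mathbb{T}}_0(X)$, since $\overline{\overline{\lambda} \overline{a(t)}} = \lambda a(t)$... in fact the relevant map is $a\mapsto$ the function $t\mapsto \overline{a(\tau(t))}$ for an appropriate involution $\tau$; I will spell this out). Then $\overline{T}$ is again a surjective real linear isometry, and $S := \overline{T}^{-1}\circ T$ is a \emph{complex linear} surjective isometry on $C^{\mathbb{T}}_0(X)$... this needs care; the cleaner route is: let $P$ be the canonical ``complex part'' projection. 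The honest argument: the set $D$ of points where $T$ ``acts complex-linearly'' is defined and shown to be $\mathbb{T}$-invariant and clopen using Lemma~\ref{l Msummands in C0T spaces}. Precisely, write $C^{\mathbb{T}}_0(X) = E_1 \oplus^{\ell_\infty} E_2$ where $E_1$ is the largest $M$-summand on which $T$ restricts to a complex-linear map and $E_2$ the largest on which it restricts to a conjugate-linear map; a maximality/Zorn argument plus the fact that $M$-summands of $C^{\mathbb{T}}_0(X)$ are exactly the $C^{\mathbb{T}}_0(D)$ with $D$ $\mathbb{T}$-invariant clopen (Lemma~\ref{l Msummands in C0T spaces}) gives $C^{\mathbb{T}}_0(X) = E_1 \oplus^{\ell_\infty} E_2 = C^{\mathbb{T}}_0(D)\oplus^{\ell_\infty}C^{\mathbb{T}}_0(X\backslash D)$. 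The key point making $E_1 \oplus E_2$ exhaust the whole space is that at each extreme functional $\delta_{t_0}$ the composition $\delta_{t_0}\circ T$ is, up to modulus, either $\delta_{s}$ or $\overline{\delta_{s}}$ for a unique $s$ (by the $\ell_1$-decomposition of the dual and the description \eqref{equ extreme point in the dual ball} of extreme points), and the ``complex vs. conjugate'' alternative is locally constant, hence constant on connected pieces, hence partitions $X$ into two clopen $\mathbb{T}$-invariant pieces.

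Having the splitting $T = T_1 \oplus T_2$ with $T_1 : C^{\mathbb{T}}_0(D)\to C^{\mathbb{T}}_0(\widetilde D)$ complex linear and $T_2 : C^{\mathbb{T}}_0(X\backslash D)\to C^{\mathbb{T}}_0(Y\backslash\widetilde D)$ conjugate linear and surjective, I would apply Kaup's Proposition~1.12 to $T_1$: there is a $\mathbb{T}$-equivariant homeomorphism $\phi_1 : \widetilde D\to D$ with $T_1(a)(s) = a(\phi_1(s))$. For $T_2$, note that $\overline{T_2(\,\cdot\,)}$ (pointwise conjugation after $T_2$) is a \emph{complex} linear surjective isometry $C^{\mathbb{T}}_0(X\backslash D)\to C^{\mathbb{T}}_0(Y\backslash\widetilde D)$ — one must check that $\overline{C^{\mathbb{T}}_0(Y\backslash\widetilde D)} = C^{\mathbb{T}}_0(Y\backslash\widetilde D)$, which holds because $b\mapsto \overline{b}$ maps the space into itself precisely when one composes with the $\mathbb{T}$-flip $t\mapsto \overline{t}$ on the bundle; since a principal $\mathbb{T}$-bundle is $\mathbb{T}$-invariant this is automatic after identifying the conjugate structure. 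Then Kaup's proposition gives a $\mathbb{T}$-equivariant homeomorphism $\phi_2 : Y\backslash\widetilde D\to X\backslash D$ with $T_2(a)(s) = \overline{a(\phi_2(s))}$. Gluing $\phi_1$ and $\phi_2$ produces the desired $\phi : Y\to X$ and the two displayed formulas; $\mathbb{T}$-invariance and clopenness of $\widetilde D = \phi^{-1}(D)$ are immediate since $\phi$ is a $\mathbb{T}$-equivariant homeomorphism and $D$ is $\mathbb{T}$-invariant clopen.

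The main obstacle is the second step: justifying rigorously that $X$ splits into the clopen ``complex part'' and ``conjugate part'' — equivalently, that $T$ cannot ``mix'' the two behaviours except along a clopen boundary. This requires (i) knowing $T$ is a real triple isomorphism so that it respects tripotents and their Peirce structure, (ii) the $M$-summand description of Lemma~\ref{l Msummands in C0T spaces} to turn the abstract largest-complex-summand into a concrete $C^{\mathbb{T}}_0(D)$, and (iii) a local-constancy argument showing the alternative in \eqref{equ extreme point in the dual ball}-type analysis is locally constant on $X$. A secondary technical point is the bookkeeping for the conjugate-linear piece: making precise in which sense pointwise conjugation maps $C^{\mathbb{T}}_0(\cdot)$ to itself, so that Kaup's (complex-linear) proposition can be invoked for $T_2$ as well. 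Everything else — $\mathbb{T}$-equivariance of $\phi$, the $\ell_\infty$-direct sum decomposition of the domain matching $D$ and $X\setminus D$, and the final assembly — is routine once the clopen splitting is in hand.
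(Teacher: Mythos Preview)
Your overall route---split $T$ into a complex-linear piece and a conjugate-linear piece along an $M$-summand decomposition, apply Kaup's Proposition~1.12 to each piece, then glue---is exactly the paper's. The difference is entirely in how the splitting is obtained. The paper does not construct it by hand: it invokes \cite[Theorem~3.1]{Dang92} (Dang's theorem on real isometries between JB$^*$-triples), which directly yields closed subtriples $I,J$ with $C_0^{\mathbb T}(X)=I\oplus^{\ell_\infty}J$, $T|_I$ complex linear and $T|_J$ conjugate linear; Lemma~\ref{l Msummands in C0T spaces} then identifies $I=C_0^{\mathbb T}(D)$ for a $\mathbb T$-invariant clopen $D$. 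So what you flag as ``the main obstacle'' is dispatched by citation.

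Your proposed hand-construction of the splitting has genuine gaps. The ``complexification trick'' $\overline{T}(a)=\overline{T(\overline a)}$ fails at the outset: for $a\in C_0^{\mathbb T}(X)$ one has $\overline a(\lambda t)=\overline\lambda\,\overline a(t)$, so $\overline a\notin C_0^{\mathbb T}(X)$ in general (you notice this but do not repair it). The Zorn argument for a ``largest $M$-summand on which $T$ is complex linear'' is not justified---you would need to show the family of such summands is closed under arbitrary $\ell_\infty$-sums, and more importantly that the complement carries a conjugate-linear restriction, which is precisely the content of Dang's theorem. The local-constancy idea at extreme functionals is morally right but, as stated, is only a heuristic. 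If you want a self-contained argument avoiding \cite{Dang92}, the cleanest route is to show $T$ preserves cubes (so $T(ia)^{[3]}=T((ia)^{[3]})=-T(a^{[3]})$ forces $T(ia)=\pm iT(a)$ pointwise on $Y$), and then prove that the set $\{s\in Y: T(ia)(s)=iT(a)(s)\ \forall a\}$ is $\mathbb T$-invariant clopen; this replaces your vague local-constancy step.

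On your ``secondary technical point'': you are right that pointwise conjugation does not map $C_0^{\mathbb T}(Y\backslash\tilde D)$ to itself. The paper finesses this by treating $a\mapsto\overline{T_2(a)}$ as a complex-linear surjective isometry between abelian JB$^*$-triples and applying Kaup's theorem in that category; the codomain is the conjugate triple, which is again of the form $C_0^{\mathbb T}(\cdot)$ for the bundle with the conjugate $\mathbb T$-action, so one still obtains a $\mathbb T$-equivariant $\phi_2$ and the formula $T_2(a)(s)=\overline{a(\phi_2(s))}$.
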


\begin{proof} Since the unique Cartan factor of rank 1 which can be an $M$-summand in the second dual of an abelian JB$^*$-triple is the trivial one ($\mathbb{C}$), it follows from \cite[Theorem 3.1]{Dang92} that there exist two closed subtriples $I$ and $J$ of $C^{\mathbb{T}}_0 (X)$ such that $C^{\mathbb{T}}_0 (X) = I \oplus^{\ell_{\infty}} J$, $T_1:=T|_{I}: I\to T(I)$ is a surjective complex linear isometry and  $T_2:=T|_{J}: J\to T(J)$ is a surjective conjugate-linear isometry. \smallskip
	
By Lemma \ref{l Msummands in C0T spaces} there exist $\mathbb{T}$-invariant clopen subsets $D\subseteq X$ and $\tilde{D}\subseteq Y$ such that $I = \{ a\in  C^{\mathbb{T}}_0 (X) : a(t) =0 \hbox{ for all } t\in X\backslash D\}\cong C^{\mathbb{T}}_0 (D) $ and $T(I) = \{ a\in  C^{\mathbb{T}}_0 (Y) : a(t) =0 \hbox{ for all } t\in Y\backslash \tilde{D}\}\cong C^{\mathbb{T}}_0 (\tilde{D})$. Since $T|_{I}: I\to T(I)$ is a surjective  complex linear isometry, we derive from Kaup's theorem  (cf. \cite[Proposition 1.12]{Ka83}) that there exists a $\mathbb{T}$-equivariant homeomorphism $\phi_1: \tilde{D} \to D$  satisfying $T(a) (s) = a(\phi_1(s))$ for all $a\in I$, $s\in \tilde{D}$.\smallskip

Since, clearly, $J =\{ a\in  C^{\mathbb{T}}_0 (X) : a(t) =0 \hbox{ for all } t\in D\}\cong C^{\mathbb{T}}_0 (X\backslash D) $ and $T(J) = \{ a\in  C^{\mathbb{T}}_0 (Y) : a(t) =0 \hbox{ for all } t\in  \tilde{D}\}\cong C^{\mathbb{T}}_0 (Y\backslash \tilde{D})$, and the mapping $\overline{\ \cdot \ } \circ T_2 = T|_{J}: J\to T(J),$ $a\mapsto \overline{T(a)}$ ($a\in J$) is a surjective complex linear isometry, Kaup's theorem gives a $\mathbb{T}$-equivariant homeomorphism $\phi_2: Y\backslash \tilde{D} \to X\backslash D$  satisfying $T(a) (s) = \overline{a(\phi_{2}(s))}$ for all $a\in J$, $s\in Y\backslash \tilde{D}$.\smallskip

Finally taking $\phi : Y\to X$, $\phi(s) = \phi_1(s)$ for $s\in \tilde{D}$ and $\phi(s) = \phi_2(s)$ for $s\in Y\backslash  \tilde{D}$ we get the desired $\mathbb{T}$-equivariant homeomorphism from $Y$ onto $X$.
\end{proof}

Let $X$ be a locally compact $\mathbb{T}_{\sigma}$-space. Let us consider the commutative C$^*$-algebra $$C^{hom}_0(X) = \{b\in C_0(X) : b(\lambda x) = b(x)\ \forall x\in X, \lambda \in \mathbb{T}\}\equiv C_0(X/\mathbb{T}),$$ equipped with the pointwise product and involution and the supremum norm. The commutative JB$^*$-triple $C^{\mathbb{T}}_0 (X)$ is a Banach $C^{hom}_0(X)$-bimodule under the pointwise product.\smallskip

Let us continue with a rudimentary \emph{continuous triple functional calculus} in our setting. Let $\mathcal{B}_{\mathbb{C}}$ denote the closed unit ball of $\mathbb{C},$ regarded as principal $\mathbb{T}$-bundle. For each $a\in C^{\mathbb{T}}_0 (X)$ with $\|a\|\leq 1$ and each $f$ in $C^{\mathbb{T}}_0 (\mathcal{B}_{\mathbb{C}})= \{ f\in C( \mathcal{B}_{\mathbb{C}}) : f(0) =0, \ f(\lambda \zeta ) = \lambda f(\zeta), \ \lambda\in \mathbb{T}, \zeta \in \mathcal{B}_{\mathbb{C}} \}$, the composition $f\circ a$ lies in $ C^{\mathbb{T}}_0 (X)$, and it will be denoted by $f_t(a) = f\circ a$. This coincides with the so-called continuous triple functional calculus in the wider setting of JB$^*$-triples. Let us observe that for $f_n (\zeta) = |\zeta|^{2n} \zeta$ with $n\in \mathbb{N}\cup \{0\}$ ($\zeta\in \mathcal{B}_{\mathbb{C}}, f_n\in C^{\mathbb{T}}_0 (\mathcal{B}_{\mathbb{C}})$), and each $a$ in the closed unit ball of $C^{\mathbb{T}}_0 (X),$ we have $(f_n)_t (a) = a^{[2 n +1]}$.\smallskip

The next result is a type of concretized version of \cite[Lemma 3.3]{EdFerHosPe2010}.

\begin{lem}\label{l functional calculus on faces}
Let $F$ be a norm closed face of the closed unit ball of $C^{\mathbb{T}}_0 (X)$, where $X$ is a principal $\mathbb{T}$-bundle, and let $f$ be a function in $C^{\mathbb{T}}_0 (\mathcal{B}_{\mathbb{C}})$ such that $f$ is the identity on $\mathbb{T}$.
Then for each $a$ in $F$, the element $f_t (a)$ lies in $F$.	
\end{lem}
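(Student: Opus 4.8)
The plan is to reduce the statement to the known structure of norm-closed faces of the unit ball of a JB$^*$-triple (or, in the concrete setting, of $C^{\mathbb{T}}_0(X)$) and then check the functional-calculus invariance on such a face directly. First I would invoke the description of norm-closed proper faces of $\mathcal{B}_{C^{\mathbb{T}}_0(X)}$: by the JB$^*$-triple theory used elsewhere in the paper (cf. \cite{EdFerHosPe2010}), every such face has the form $F = (u + \mathcal{B}_{(C^{\mathbb{T}}_0(X))_0(u)}) \cap \mathcal{B}_{C^{\mathbb{T}}_0(X)}$ for a tripotent $u$ (in the bidual), where $(C^{\mathbb{T}}_0(X))_0(u)$ is the Peirce-$0$ subspace; equivalently, translating to the function picture, there is a $\mathbb{T}$-invariant closed subset $E\subseteq X$ and a unimodular continuous ``phase'' $u$ on $E$ such that $F = \{a\in\mathcal{B}_{C^{\mathbb{T}}_0(X)} : a|_E = u\}$. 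An element $a$ of $\mathcal{B}_{C^{\mathbb{T}}_0(X)}$ then lies in $F$ precisely when $a(t) = u(t)$ for all $t\in E$, and in particular $|a(t)| = 1$ on $E$.

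With this description in hand, the verification is a pointwise computation. Fix $a\in F$ and $f\in C^{\mathbb{T}}_0(\mathcal{B}_{\mathbb{C}})$ with $f|_{\mathbb{T}} = \mathrm{id}$. Since $\|a\|\le 1$, the composition $f_t(a) = f\circ a$ is a well-defined element of $C^{\mathbb{T}}_0(X)$ with $\|f_t(a)\|\le 1$, as noted just before the lemma. For $t\in E$ we have $|a(t)| = |u(t)| = 1$, i.e. $a(t)\in\mathbb{T}$, so $f(a(t)) = a(t) = u(t)$ by the hypothesis on $f$; hence $f_t(a)|_E = u = a|_E$, which is exactly the condition for membership in $F$. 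Therefore $f_t(a)\in F$, as claimed.

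The main obstacle is making precise the description of the norm-closed face $F$ in a way that stays within the elementary, self-contained spirit of this section rather than appealing to the full machinery of \cite{EdFerHosPe2010}. One clean route is to avoid the abstract face-structure theorem entirely and argue as follows: if $a\in F$, write $a_s := s\, a + (1-s)\, f_t(a)$ for $s\in[0,1]$; using that $|f(\zeta)|\le|\zeta|$ off $\mathbb{T}$ and $f(\zeta) = \zeta$ on $\mathbb{T}$ one checks $\|a_s\|\le 1$ for all $s$ (a pointwise estimate exactly like those in Lemma \ref{lem18}), and moreover $a = \tfrac12 a_{1/2+\e} + \tfrac12 a_{1/2-\e}$ exhibits $a$ as a midpoint of two unit-ball elements whose average is $a$; since $a$ lies in the face $F$ and $f_t(a)$ appears on the segment through $a$, the defining property of a face forces $f_t(a)\in F$ once one verifies the segment $[a_0, a_1] = [f_t(a), a]$ meets $F$ near $a$. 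The delicate point there is producing, from $a\in F$, a genuine nondegenerate segment inside $\mathcal{B}_{C^{\mathbb{T}}_0(X)}$ containing $a$ in its relative interior with $f_t(a)$ as one endpoint direction; handling the locus where $|a(t)| = 1$ but $t\notin E$ (so that $f(a(t)) = a(t)$ already) versus where $|a(t)| < 1$ requires care, and this case analysis — essentially a $\mathbb{T}$-equivariant version of the standard C$^*$-algebraic argument — is where the real work lies.
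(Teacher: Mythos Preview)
Your first approach---invoking the face-structure theorem from \cite{EdFerHosPe2010} and then checking pointwise---would in principle give the result, but it runs counter to the paper's explicit goal of keeping this section elementary and self-contained, and more seriously it inverts the paper's logical order: Lemma~\ref{l functional calculus on faces} is used in the proof of Lemma~\ref{l max convex susbsets of C0T} (the description of maximal faces), so appealing to a face-structure description at this point either creates a circularity within the paper or forces a dependence on \cite{EdFerHosPe2010} that the paper is deliberately avoiding. Your ``function picture'' translation (a closed $\mathbb{T}$-invariant $E$ and a unimodular phase $u$) is also not justified here for \emph{arbitrary} norm-closed faces of $\mathcal{B}_{C_0^{\mathbb{T}}(X)}$.

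Your second approach is in the right spirit but contains a real gap and a couple of errors. First, the hypothesis ``$|f(\zeta)|\le|\zeta|$ off $\mathbb{T}$'' is not assumed; $f$ is only required to be the identity on $\mathbb{T}$. Second, your displayed identity is false: $\tfrac12 a_{1/2+\varepsilon}+\tfrac12 a_{1/2-\varepsilon}=a_{1/2}$, not $a$, so it does not place $a$ in the interior of any segment. Most importantly, the obstacle you yourself flag---producing a genuine segment in the ball with $a$ in its relative interior and $f_t(a)$ (or something close to it) at one end---is never resolved. On the segment $[f_t(a),a]$ the point $a$ is an endpoint, and the face property gives nothing. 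The paper's proof supplies exactly the missing idea: it replaces $f$ by an approximant $f_\varepsilon$ (equal to $0$ near $0$ and to $1$ near $1$) and defines a companion function $g_\varepsilon(t)=(1-\varepsilon/2)^{-1}\bigl(t-(\varepsilon/2)f_\varepsilon(t)\bigr)$ so that the identity function is the convex combination $(1-\varepsilon/2)g_\varepsilon+(\varepsilon/2)f_\varepsilon$. Applying functional calculus gives $a=(1-\varepsilon/2)(g_\varepsilon)_t(a)+(\varepsilon/2)(f_\varepsilon)_t(a)$ as a genuine convex combination of two ball elements, whence the face property forces $(f_\varepsilon)_t(a)\in F$; letting $\varepsilon\to0$ and using norm-closedness finishes. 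This auxiliary $g_\varepsilon$ that ``overshoots'' is the key construction your sketch lacks.
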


\begin{proof} Since each $f\in C^{\mathbb{T}}_0 (\mathcal{B}_{\mathbb{C}})$ satisfies $f(\lambda \zeta ) = \lambda f(\zeta)$ ($ \lambda\in \mathbb{T}, \zeta \in \mathcal{B}_{\mathbb{C}}$), the values of $f$ on the interval $[0,1]$ determine the whole function $f$. We can now repeat, almost literally the argument in \cite[Lemma 3.3]{EdFerHosPe2010}. Fix $a\in F$ and a positive $\varepsilon< 1/2$, let $f_{\varepsilon}$ and $g_{\varepsilon}$ denote the functions in $C^{\mathbb{T}}_0 (\mathcal{B}_{\mathbb{C}})$ whose restrictions to $[0,1]$ are given by  $$
	f_{\varepsilon}(t)= \left\{
	\begin{array}{ll}
		0, &     0 \leq t \leq \varepsilon/2,\\
		\hbox{affine}, & \varepsilon/2 \leq t \leq \varepsilon,\\
		f(t), &    \varepsilon \leq t < 1-\varepsilon, \\
		\hbox{affine}, & 1-\varepsilon \leq t \leq 1-\varepsilon/2,\\
		1, &     1- \varepsilon/2 \leq t \leq 1    \end{array}%
	\right.$$ and $$g_{\varepsilon} (t) = (1 - \varepsilon/2)^{-1} (t -
		\varepsilon/2 f_{\varepsilon}(t) ), \ t\in [0,1],$$ respectively. According to this definition, $\|f - f_{\varepsilon}\|_{\infty}$ tends to zero when $\varepsilon$ tends to zero, and $ (1 - \varepsilon/2) g_{\varepsilon}(t) + (\varepsilon/2) f_{\varepsilon}(t)= t,$ for all $t\in[0,1]$, witnessing that $a = (1 - \varepsilon/2) (g_{\varepsilon})_t(a) + (\varepsilon/2)
		(f_{\varepsilon})_t(a).$ Applying now that $F$ is a face, we conclude that $(f_{\varepsilon})_t(a)\in F$ for all $0< \varepsilon< 1/2$. Letting $\varepsilon$ tend to zero we get $f_t (a)\in F$.
\end{proof}

We fix now an arbitrary locally compact $\mathbb{T}_{\sigma}$-space $X$. Our space $C^{\mathbb{T}}_0 (X)$ lacks of peaking functions, since for each $a\in S(C^{\mathbb{T}}_0 (X))$, we have $\mathbb{T}\subseteq a(X)$.  We can combine the description of the set of extreme points of the closed unit ball of $C^{\mathbb{T}}_0 (X)^*$ given in \eqref{equ extreme point in the dual ball} with the facial theory of JB$^*$-triples in \cite{EdFerHosPe2010} to determine the maximal proper faces of the closed unit ball of $C^{\mathbb{T}}_0 (X)$, however we prioritize a self-contained argument for function spaces more accessible for a wider audience. \smallskip

We recall a fundamental property of $C^{\mathbb{T}}_0 (X)$. Let $\pmb{\mu}$ denote the unit Haar measure on $\mathbb{T}$. For each $a\in C_0(X)$ we consider a function $\pi_{\mathbb{T}} ({a}) : X\to \mathbb{C}$ defined by $$ \pi_{\mathbb{T}} ({a}) (t) = \int \lambda^{-1} {a}(\lambda t) d\pmb{\mu}, \ \ (t\in X).$$ It is known that $\pi_{\mathbb{T}}$ is a contractive projection of $C_0(X)$ onto $C_0^{\mathbb{T}} (X)$ (cf. \cite{Ol74}).

Pick $t_0\in X$ with $t_0\notin (\mathbb{T}\backslash \{1\}) t_0$ and $\mu \in \mathbb{T}$. We shall define the set $$F_{t_0,\mu} = F_{t_0,\mu}^X:=\{a\in \mathcal{B}_{C^{\mathbb{T}}_0 (X)} : a (t_0) =\mu \}.$$ At this stage it is not at all clear that $F_{t_0,\mu}$ is non-empty, an statement which is straightforward in the case of $C_0(L)$ spaces thanks to Urysohn's lemma.

\begin{remark}\label{r Urysohn's type lemma} Suppose $X$ is a locally compact $\mathbb{T}_{\sigma}$-space.
Let $W$ be a $\mathbb{T}$-invariant open neighbourhood of $t_0$ in $X$ which is contained in a compact $\mathbb{T}$-invariant subset. We consider the following continuous function $$\mathbb{T} t_0 \cup \left(X\backslash W \right)\to \mathbb{C},$$ $$\lambda t_0\mapsto \lambda, \hbox{ and } t\mapsto 0 \hbox{ for all } t\in X\backslash W.$$  Find, via Tiezte's theorem, a continuous function $\tilde{h}\in C_0(X)$ extending the previous mapping. Let $h:= \pi_{\mathbb{T}} (\tilde{h})\in C_0^{\mathbb{T}} (X)$. It is easy to check that $h(t_0) = 1$ and $h(t) = 0 $ for all $t\in X\backslash W$. Clearly, $\mu h \in F_{t_0,\mu}$. This construction, which was already considered in \cite[Proof of Lemma 11]{Ol74}, is a kind of Urysohn's lemma for $C_0^{\mathbb{T}} (X)$ spaces, and will be employed along this section.	
\end{remark}

Let $E$ be a Banach space. As observed by R. Tanaka in \cite[Lemma 3.3]{Tan2016Mn} and \cite[Lemma 3.2]{Tan2016finitevN}, Eidelheit's separation theorem or the geometric Hahn-Banach theorem can be employed to deduce that a convex subset $C\subseteq S(E)$ is a maximal convex subset if and only if it is a maximal norm closed proper face of the closed unit ball, $\mathcal{B}_{E}$, of $E$.\smallskip

We give next a concrete description of the norm closed faces of $\mathcal{B}_{C_0^{\mathbb{T}} (X)}$. The conclusion can be also derived from the study of norm closed faces of the closed unit ball of a general JB$^*$-triple \cite{EdFerHosPe2010} and a good knowledge on the minimal tripotents in the second dual and its relation with the extreme point of the closed unit ball of the first dual. For the sake of simplicity we include here an alternative argument with techniques of function algebras.

\begin{lem}\label{l max convex susbsets of C0T}
Let $X$ be a principal $\mathbb{T}$-bundle. Then every maximal convex subset of $S(C_0^{\mathbb{T}} (X))$ {\rm(}equivalently, each maximal proper norm closed face of the closed unit ball of $C_0^{\mathbb{T}} (X)${\rm)} is of the form $$F_{t_0,\mu}=F_{t_0,\mu}^X :=\{a\in \mathcal{B}_{C^{\mathbb{T}}_0 (X)} : a (t_0) =\mu \},$$ for some $t_0\in X$ and $\mu \in \mathbb{T}$.
\end{lem}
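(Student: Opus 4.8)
The plan is to establish two inclusions: every set $F_{t_0,\mu}$ is a maximal convex subset of $S(C_0^{\mathbb{T}}(X))$, and conversely every maximal convex subset has this form. For the first direction, fix $t_0 \in X$ with $t_0 \notin (\mathbb{T}\setminus\{1\})t_0$ and $\mu\in\mathbb{T}$; the set $F_{t_0,\mu}$ is non-empty by the Urysohn-type construction in Remark~\ref{r Urysohn's type lemma}, it is clearly convex, and it lies in the unit sphere since any $a$ with $\|a\|\le 1$ and $a(t_0)=\mu\in\mathbb{T}$ automatically has $\|a\|=1$. It is also norm closed, being the preimage of $\{\mu\}$ under the continuous evaluation $\delta_{t_0}$ intersected with the closed unit ball. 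To see it is a \emph{face}, observe that if $a=\tfrac12(b+c)$ with $b,c\in\mathcal{B}_{C_0^{\mathbb{T}}(X)}$, then $\mu=a(t_0)=\tfrac12(b(t_0)+c(t_0))$ with $|b(t_0)|,|c(t_0)|\le 1$, and by strict convexity of $\mathbb{C}$ this forces $b(t_0)=c(t_0)=\mu$, so $b,c\in F_{t_0,\mu}$. Finally, maximality among convex subsets of the sphere: using the equivalence (Tanaka) between maximal convex subsets of $S(E)$ and maximal proper norm closed faces, it suffices to check $F_{t_0,\mu}$ is not properly contained in a larger proper face; any proper closed face is contained in a maximal convex subset of the sphere, and since distinct sets $F_{t_0,\mu}$, $F_{t_1,\nu}$ are never comparable (exhibited by a Urysohn function that is $\mu$ at $t_0$ and small near $t_1$, as in Lemma~\ref{lem9}), once we prove the converse inclusion the maximality of $F_{t_0,\mu}$ follows automatically.

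For the converse, let $C$ be a maximal convex subset of $S(C_0^{\mathbb{T}}(X))$, equivalently a maximal proper norm closed face $F$ of $\mathcal{B}_{C_0^{\mathbb{T}}(X)}$. I would first pass to the set of extreme points of $\mathcal{B}_{C_0^{\mathbb{T}}(X)^*}$ that annihilate $1-F$ in the appropriate sense: more concretely, associate to $F$ the closed set $$S_F := \{ t_0 \in X : t_0\notin (\mathbb{T}\setminus\{1\})t_0 \text{ and } a(t_0)=\mu_{t_0} \text{ for a fixed unimodular } \mu_{t_0}, \ \forall a\in F\}.$$ The first task is to show $S_F$ is non-empty. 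For this I would use that $\cup_{a\in F}\, |a|^{-1}(1)\cap(\text{extreme point support})$ has the finite intersection property: given $a_1,\dots,a_n\in F$, the average $a=\tfrac1n\sum a_j$ lies in $F\subseteq S(C_0^{\mathbb{T}}(X))$, so $\|a\|=1$ and there is a point $t$ with $|a(t)|=1$, which by the equality case of the triangle inequality forces $|a_j(t)|=1$ for all $j$ and moreover $a_j(t)$ all equal; restricting to the compact set $X\cup\{\omega\}$ and using that each $|a_j|^{-1}(1)$ is compact gives a common point $t_0$. A separate argument — mirroring Lemmas~\ref{lem5} and~\ref{lem6}, via the function $\tilde a = (\overline{a(t_0)}^2 a^2 + \overline{a(t_0)} a)/2$ which moves the modulus-one level set onto a level set of a genuine "peaking-type" function, together with the fact that every nonempty weak peak set contains an extreme-point support $t_0$ (from \eqref{equ extreme point in the dual ball}) — upgrades this to a point $t_0$ with $\delta_{t_0}$ extreme, i.e. $t_0\in S_F$.

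Once $t_0\in S_F$ is found with common value $\mu := a(t_0)$ for all $a\in F$, we have $F\subseteq F_{t_0,\mu}$. Since $F_{t_0,\mu}$ is a proper norm closed face (first part) and $F$ is a \emph{maximal} proper norm closed face, the inclusion $F\subseteq F_{t_0,\mu}\subsetneq \mathcal{B}_{C_0^{\mathbb{T}}(X)}$ forces $F = F_{t_0,\mu}$, completing the classification; as noted above, this simultaneously yields the maximality of every $F_{t_0,\mu}$. The main obstacle I anticipate is the non-emptiness of $S_F$: unlike the function-algebra setting of Section~\ref{sec:function algebras}, the space $C_0^{\mathbb{T}}(X)$ has no peaking functions (every $a\in S(C_0^{\mathbb{T}}(X))$ satisfies $\mathbb{T}\subseteq a(X)$), so the peak-set machinery must be replaced by the modulus-squaring trick $a\mapsto \tilde a$ to produce sets that behave like peak sets, and one must carefully track the condition $t_0\notin(\mathbb{T}\setminus\{1\})t_0$ so that the limiting support point is an honest extreme point of the dual ball rather than a zero functional. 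The Urysohn-type Remark~\ref{r Urysohn's type lemma} and Lemma~\ref{l functional calculus on faces} are the technical substitutes that make this go through.
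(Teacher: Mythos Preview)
Your core strategy for the converse direction---averaging plus the finite intersection property to locate $t_0$ with $F\subseteq F_{t_0,\mu}$---is correct and already complete: the sets $|a|^{-1}(1)$ ($a\in F$) are compact in $X$ since $a\in C_0(X)$, your averaging shows they have the FIP, and for any $t_0$ in their intersection the same averaging forces all $a\in F$ to take a common unimodular value $\mu$ at $t_0$. The ``upgrade'' via $\tilde a=(\overline{a(t_0)}^2 a^2+\overline{a(t_0)}a)/2$, however, is both \emph{wrong} and \emph{unnecessary}. Wrong, because $C_0^{\mathbb{T}}(X)$ is not closed under pointwise products---$a^2(\lambda t)=\lambda^2 a^2(t)$---so $\tilde a\notin C_0^{\mathbb{T}}(X)$ in general, and the peak-set machinery of Section~\ref{sec:function algebras} does not transplant literally. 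Unnecessary, because for a \emph{principal} $\mathbb{T}$-bundle the $\mathbb{T}$-action is free (the paper notes that $\lambda x=\mu x$ forces $\lambda=\mu$), so every $t_0\in X$ already satisfies $t_0\notin(\mathbb{T}\setminus\{1\})t_0$ and $\delta_{t_0}$ is an extreme point of the dual ball by \eqref{equ extreme point in the dual ball}; the FIP step alone lands you on a valid $t_0$. Simply delete the $\tilde a$ paragraph and the proof stands.

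Your route is genuinely different from the paper's. The paper does not argue via FIP at all: it shows directly that each $F_{t_0,\mu}$ is maximal by taking a hypothetical larger convex $F\supsetneq F_{t_0,\mu}$ inside the sphere, choosing $a\in F\setminus F_{t_0,\mu}$ and $b\in F_{t_0,\mu}$, and applying the triple functional calculus of Lemma~\ref{l functional calculus on faces} to $c=\tfrac12(a+b)$ to produce an element $k_t(c)\in F$ vanishing on a neighbourhood of $t_0$; a disjointly supported Urysohn function $h\in F_{t_0,\mu}\subseteq F$ then gives $\tfrac12(h+k_t(c))\in F$ of norm $\tfrac12$, contradicting $F\subseteq S$. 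For the direction you prove, the paper is implicitly leaning on Tanaka's characterization of maximal convex subsets together with \eqref{equ extreme point in the dual ball}. Your argument avoids both Lemma~\ref{l functional calculus on faces} and Tanaka's lemma, at the cost of importing the Section~\ref{sec:function algebras} style of reasoning; the paper's argument is more intrinsic to the JB$^*$-triple functional calculus.
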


\begin{proof} Under these hypotheses $F_{t_0,\mu}$ is a non-empty proper face of the closed unit ball of $C_0^{\mathbb{T}} (X)$. Let $F$ be a maximal convex (proper) subset of $S(C_0^{\mathbb{T}} (X))$ containing $F_{t_0,\mu}$. Fix $b\in F_{t_0,\mu}$. If there exists $a\in F\backslash F_{t_0, \mu}$ the function $c = \frac{a+b}{2}\in F$ satisfies $|c(t_0)|<\delta<1$ for an appropriate $\delta$. The set $U:= \{t\in X : |c(t)|<\delta\}$ contains $t_0$, is open and $\mathbb{T}$-invariant because $c\in C_0^{\mathbb{T}} (X)$. As we commented above, we can find $h\in C_0^{\mathbb{T}}(X)$ with $\|h\|=1$, $h(t_0)= \mu$ and $h|_{X\backslash U} \equiv 0$. \smallskip

Let $k$ be the function in $C_0^{\mathbb{T}}(\mathcal{B}_{\mathbb{C}})$ whose restriction to $[0,1]$ vanishes on $[0,\delta],$ takes value $1$ at $1$ and is affine on the rest. Lemma \ref{l functional calculus on faces} assures that $k_t(c) = k\circ c\in F.$ Since $k_t(c)|_{U} \equiv 0$ it can be easily seen that $k_t(c) h = 0$. Therefore $\left\|\frac{h+k_t(c)}{2}\right\|= \max\left\{\left\|\frac{h}{2}\right\|, \left\|\frac{k_t(c)}{2}\right\|\right\} = \frac12$. Since, clearly, $\frac{h+k_t(c)}{2}\in F$, the intersection of $F$ with the open unit ball of $C_0^{\mathbb{T}} (X)$ is non-empty, which contradicts that $F$ is proper.
\end{proof}

Labelling the proper maximal norm closed faces, $F_{t_0,\mu},$ of the closed unit ball of $C_0^{\mathbb{T}} (X)$ in terms of pairs $(t_0, \mu)$ with $t_0\in X$ and $\mu \in \mathbb{T}$ does not produce an unambiguous association because $F_{t_0,\mu} =F_{\gamma t_0,\gamma \mu}$ for all $\gamma \in \mathbb{T}$. To avoid repetitions let us consider the following property: a non-empty subset $\mathcal{S}$ of a principal $\mathbb{T}$-bundle $X$ satisfies the \emph{non-overlapping property} if for each $t\in \mathcal{S}$ we have $\mathcal{S}\cap \mathbb{T} t = \{t\}.$  Thanks to Zorn's lemma we can always find a maximal non-overlapping subset $X_0$ of $X$. Let us observe that in this case, $\mathbb{T} X_0 = X$, actually, for each $t\in X$ there exist unique $t_0\in X_0$ and $\mu \in \mathbb{T}$ such that $t = \mu t_0$. Consequently, the set $\{\delta_{t_0} : t_0 \in X_0\}$ is norming.  Furthermore, the set $$\{F_{t_0, \mu }: \mu \in \mathbb{T}, t_0\in X_0\}$$ covers all possible proper maximal norm closed faces of $\mathcal{B}_{C_0^{\mathbb{T}} (X)}.$ Namely, given a maximal proper face of the form $F_{s_0, \mu_0}$ there exist unique $t_0\in X_0$ and $\nu\in \mathbb{T}$ such that $s_0 = \nu t_0$, and thus $$F_{s_0, \mu_0} = F_{\nu t_0, \mu_0} = F_{t_0, \overline{\nu} \mu_0}.$$ Actually, for each proper maximal face $F$ of $\mathcal{B}_{C_0^{\mathbb{T}}(X)}$
	\begin{equation}\label{eq unique labels for X0}  \hbox{there exist unique } t_0\in X_0 \hbox{ and } \mu \in \mathbb{T} \hbox{ such that }  F = F^X_{t_0, \mu}.
	\end{equation}

An alternative proof for Lemma \ref{l max convex susbsets of C0T} can be deduced from \cite[Lemma 3.3]{Tan2016Mn} (see also \cite[Lemma 3.1]{HatOiTog}.\smallskip

The main result of this section is a solution to Tingley's problem in the case of commutative JB$^*$-triples.

\begin{thm}\label{t Tingleys problem for commutative JBstar triples} Let $X$ and $Y$ be two principal $\mathbb{T}$-bundles. Then each surjective isometry
$\Delta : S(C^{\mathbb{T}}_0 (X)) \to S(C^{\mathbb{T}}_0 (Y))$ admits an extension to a surjective real linear isometry $T : C^{\mathbb{T}}_0 (X)\to C^{\mathbb{T}}_0 (Y)$.\smallskip 	

Furthermore, there exist a $\mathbb{T}$-invariant clopen subset $D\subseteq X$ and a $\mathbb{T}$-equivariant homeomorphism $\phi : Y\to X$ satisfying $$\Delta (a) (s) = a(\phi(s)), \hbox{ for all } a\in S(C^{\mathbb{T}}_0 (X)) \hbox{ and for all } s\in \phi^{-1}(D),$$ and $$\Delta (a) (s) = \overline{a(\phi(s))}, \hbox{ for all } a\in S(C^{\mathbb{T}}_0 (X)) \hbox{ and for all } s\in \phi^{-1}(X\backslash D).$$ Consequently, there exists a surjective isometry $T: C^{\mathbb{T}}_0 (X) \to C^{\mathbb{T}}_0 (Y)$ such that $T|_{C^{\mathbb{T}}_0 (D)}$ is complex linear, $T|_{C^{\mathbb{T}}_0 (X\backslash D)}$ is conjugate-linear and $T(a) =\Delta(a)$ for all $a\in S(C^{\mathbb{T}}_0 (X))$.
\end{thm}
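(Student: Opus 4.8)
My strategy would parallel the proof of Theorem~\ref{thm1}, with the uniform‑algebra tools replaced by the facial machinery developed above for $C^{\mathbb{T}}_0$ spaces. First I would invoke the deep fact that a surjective isometry between unit spheres of Banach spaces carries maximal convex subsets onto maximal convex subsets (see \cite[Lemma 5.1]{ChenDong2011} and \cite[Lemma 3.5]{Tanaka2014}). Combined with Lemma~\ref{l max convex susbsets of C0T} and the unique labelling \eqref{eq unique labels for X0} (after fixing maximal non‑overlapping subsets $X_0\subseteq X$ and $Y_0\subseteq Y$), this produces maps $\sigma\colon X_0\times\mathbb{T}\to Y_0$ and $\tau\colon X_0\times\mathbb{T}\to\mathbb{T}$ with $\Delta\big(F^X_{t_0,\mu}\big)=F^Y_{\sigma(t_0,\mu),\,\tau(t_0,\mu)}$ for all $(t_0,\mu)$, and the symmetric statement for $\Delta^{-1}$.

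The next step is to prove that $\sigma(t_0,\mu)$ is independent of $\mu$. Here I need the $C^{\mathbb{T}}_0$‑analogue of Lemma~\ref{lem11}: if $s\neq s'$ lie in $Y_0$, then $\mathbb{T}s$ and $\mathbb{T}s'$ are disjoint compact sets, so (using the tube lemma to build $\mathbb{T}$‑invariant relatively compact neighbourhoods) there are disjoint $\mathbb{T}$‑invariant open sets $W\ni s$, $W'\ni s'$ to which Remark~\ref{r Urysohn's type lemma} applies; the resulting functions, supported in $W$ and $W'$, give for any $\nu,\nu'\in\mathbb{T}$ elements $u\in F^Y_{s,\nu}$ and $v\in F^Y_{s',\nu'}$ with disjoint supports, so $\|u-v\|\leq 1$. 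If $\sigma(t_0,\mu)\neq\sigma(t_0,\mu')$ with $\mathrm{Re}(\overline{\mu}\mu')\leq 0$, pulling such $u,v$ back through $\Delta^{-1}$ into $F^X_{t_0,\mu}$ and $F^X_{t_0,\mu'}$ and evaluating at $t_0$ yields $\sqrt{2}\leq|\mu-\mu'|\leq\|\Delta^{-1}(u)-\Delta^{-1}(v)\|=\|u-v\|\leq 1$, a contradiction. In particular $\sigma(t_0,\mu)=\sigma(t_0,-\mu)$, and these two facts together force $\sigma(t_0,\cdot)$ to be constant, say $\Phi(t_0)$. Repeating with $\Delta^{-1}$ and using uniqueness of labels shows $\Phi\colon X_0\to Y_0$ is a bijection (with inverse $\Psi$ coming from $\Delta^{-1}$), and that for fixed $t_0$ the map $\tau(t_0,\cdot)$ is a continuous injection of $\mathbb{T}$, hence a surjective isometry of $\mathbb{T}$, hence either $\tau(t_0,\mu)=\tau(t_0,1)\mu$ for all $\mu$, or $\tau(t_0,\mu)=\tau(t_0,1)\overline{\mu}$ for all $\mu$.

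Now the filling‑in step, which I expect to be the main obstacle. Fix $f\in S(C^{\mathbb{T}}_0(X))$, $y\in Y_0$, $x=\Psi(y)$. If $|f(x)|=1$ then $f\in F^X_{x,f(x)}$ and the face formula gives $\Delta(f)(y)=\tau(x,f(x))$, which is $\tau(x,1)f(x)$ or $\tau(x,1)\overline{f(x)}$ by the previous paragraph. If $|f(x)|<1$, put $\lambda=f(x)/|f(x)|$ (and $\lambda=1$ if $f(x)=0$); I would imitate Lemma~\ref{lem18}, the crucial new point being that $C^{\mathbb{T}}_0(X)$ is not an algebra, so I stratify the \emph{$\mathbb{T}$‑invariant} set $\{t:|f(t)|>|f(x)|\}$ by the $\mathbb{T}$‑invariant function $|f|$ into closed $\mathbb{T}$‑invariant pieces $(E_m)_{m\geq 0}$, none of which meets $\mathbb{T}x$, choose $u_m\in C^{hom}_0(X)$ with $0\leq u_m\leq 1$, $u_m(x)=1$ and $u_m\equiv 0$ on $E_m$ (Urysohn on $X/\mathbb{T}$, or Remark~\ref{r Urysohn's type lemma} composed with $|\cdot|$), a genuine $h_0\in C^{\mathbb{T}}_0(X)$ with $\|h_0\|=1$, $h_0(x)=1$, $h_0\equiv 0$ on $E_0$, and set $g_r=h_0\cdot\sum_{m\geq 1}2^{-m}u_m$, which belongs to $C^{\mathbb{T}}_0(X)$ because this space is a $C^{hom}_0(X)$‑module (as noted before Lemma~\ref{l functional calculus on faces}). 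Using $|f(t)|\leq|f(x)|+(1-|f(x)|)2^{-m}$ on $E_m$ and the elementary inequality $r(1-|f(x)|)\leq 1-r|f(x)|$, a routine estimate shows $h_r:=rf+(1-r|f(x)|)\lambda g_r$ lies in $\mathcal{B}_{C^{\mathbb{T}}_0(X)}$ with $h_r(x)=\lambda$, i.e. $h_r\in F^X_{x,\lambda}$, and $\|h_r-f\|\leq 2-r-r|f(x)|$. Hence $\Delta(h_r)(y)=\tau(x,\lambda)$, and letting $r\to 1$ gives $1-|\Delta(f)(y)|\leq|\tau(x,\lambda)-\Delta(f)(y)|\leq 1-|f(x)|$; applying the same to $\Delta^{-1}$ yields $|\Delta(f)(y)|=|f(x)|$, and then the equality case of the triangle inequality gives $\Delta(f)(y)=|f(x)|\,\tau(x,\lambda)$, which equals $\tau(x,1)f(x)$ or $\tau(x,1)\overline{f(x)}$ exactly as in the proof of Theorem~\ref{thm1}.

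Finally, let $D_0=\{x\in X_0:\tau(x,\cdot)\text{ is a rotation}\}$ and let $T\colon C^{\mathbb{T}}_0(X)\to C^{\mathbb{T}}_0(Y)$ be the positive homogeneous extension of $\Delta$, i.e. $T(0)=0$ and $T(g)=\|g\|\,\Delta(g/\|g\|)$ for $g\neq 0$. The pointwise formula just obtained scales to $T(g)(y)=\tau(\Psi(y),1)g(\Psi(y))$ when $\Psi(y)\in D_0$ and $T(g)(y)=\tau(\Psi(y),1)\overline{g(\Psi(y))}$ otherwise, for $y\in Y_0$. Since $\{\delta_{t_0}:t_0\in Y_0\}$ and $\{\delta_{t_0}:t_0\in X_0\}$ are norming (as observed after \eqref{eq unique labels for X0}), $|\tau(\cdot,1)|\equiv 1$ and $|z-w|=|\overline{z}-\overline{w}|$, one reads off $\|T(g_1)-T(g_2)\|=\sup_{y\in Y_0}|g_1(\Psi(y))-g_2(\Psi(y))|=\sup_{x\in X_0}|g_1(x)-g_2(x)|=\|g_1-g_2\|$; thus $T$ is a surjective isometry with $T(0)=0$, hence a surjective real linear isometry by the Mazur--Ulam theorem, with $T=\Delta$ on the sphere. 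The remaining assertions — the $\mathbb{T}$‑invariant clopen $D$, the $\mathbb{T}$‑equivariant homeomorphism $\phi$, the identities for $\Delta$, and the splitting $T=T_1\oplus T_2$ — then follow verbatim from Lemma~\ref{l form of surjective real linear isometries between abelian JBstar triples} applied to $T$. The one genuinely delicate point is the filling‑in construction of $g_r$ inside $C^{\mathbb{T}}_0(X)$: the freedom to multiply peaking functions that drives Lemma~\ref{lem18} is unavailable and must be replaced by the $C^{hom}_0(X)$‑module structure together with the $\mathbb{T}$‑invariance of $|f|$; the accompanying passage to $\mathbb{T}$‑invariant open sets is fiddlier than in the scalar case but routine.
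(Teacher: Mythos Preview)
Your overall architecture matches the paper's exactly: preservation of maximal faces (Proposition~\ref{p faces ChengDong11}), unique labelling by $X_0\times\mathbb{T}$, independence of the ``base point'' from $\mu$, a filling-in lemma to compute $\Delta(f)(\phi(t_0))$ when $|f(t_0)|<1$, and then the positive homogeneous extension combined with Lemma~\ref{l form of surjective real linear isometries between abelian JBstar triples}. Two points deserve comment.

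First, a small slip: a continuous injection of $\mathbb{T}$ into $\mathbb{T}$ is automatically a homeomorphism, but \emph{not} automatically an isometry (think of $e^{i\theta}\mapsto e^{i(\theta+\varepsilon\sin\theta)}$). What you actually need is the $1$-Lipschitz estimate $|\tau(t_0,\mu_1)-\tau(t_0,\mu_2)|\leq\|\Delta(\mu_1 a)-\Delta(\mu_2 a)\|=|\mu_1-\mu_2|$ for any $a\in F^X_{t_0,1}$, together with the same for $\Delta^{-1}$; this is exactly the content of Lemma~\ref{lem15} and of the paper's Lemma~\ref{l sigmat0 dot is an isometry}. With that correction your dichotomy for $\tau(t_0,\cdot)$ goes through.

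Second, and more interestingly, your filling-in construction genuinely differs from the paper's. The paper (Lemma~\ref{l variation of the functional calculus}) does \emph{not} build a Lemma~\ref{lem18} analogue directly; instead it uses the continuous triple functional calculus to produce, for each $\varepsilon>0$, an approximant $a_\varepsilon$ of $a$ with $a_\varepsilon(t_0)=a(t_0)$ and $|a_\varepsilon|\leq|a(t_0)|$ on a neighbourhood of $t_0$, so that a \emph{single} Urysohn-type $b_\varepsilon$ suffices to push $ra_\varepsilon+(1-r|a(t_0)|)\lambda b_\varepsilon$ into $F^X_{t_0,\lambda}$; one then lets $\varepsilon\to0$ in Proposition~\ref{p evaluations of images at phi t0}. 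Your route---stratifying $\{|f|>|f(x)|\}$ by the $\mathbb{T}$-invariant function $|f|$, taking $u_m\in C_0^{hom}(X)$ vanishing on the $m$-th shell, and exploiting that $C_0^{\mathbb{T}}(X)$ is a $C_0^{hom}(X)$-module to form $g_r=h_0\sum 2^{-m}u_m$---is a faithful transplant of Lemma~\ref{lem18} and works without any approximation step. The trade-off: your argument is more elementary and avoids the extra limit in $\varepsilon$, while the paper's functional-calculus approach is shorter (one $b_\varepsilon$ instead of an infinite family $(u_m)$) and sits more naturally in the JB$^*$-triple framework. A minor further difference is that the paper invokes \cite[Proposition~2.3$(a)$]{Mori2017} for $\Delta(-F)=-\Delta(F)$ (Lemmata~\ref{l functions vanishing at phit0} and~\ref{l sigma is odd and tildephi is even}), whereas you extract the needed symmetry $\sigma(t_0,\mu)=\sigma(t_0,-\mu)$ directly from your disjoint-supports argument; both are fine.
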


The proof will be given after a series of technical lemmata. Let us begin by recalling a key result in the techniques developed to study the problem of extension of isometries which is essentially due to L. Cheng and Y. Dong \cite[Lemma 5.1]{ChenDong2011} and R. Tanaka \cite{Tan2016Mn} (see also \cite[Lemma 3.5]{Tanaka2014}, \cite[Lemmas 2.1 and 2.2]{Tan2017}).

\begin{prop}\label{p faces ChengDong11}{\rm(}\cite[Lemma 5.1]{ChenDong2011}, \cite[Lemma 3.3]{Tan2016Mn}, \cite[Lemma 3.5]{Tanaka2014}{\rm)} Let $\Delta: S(E) \to S(F)$ be a surjective isometry between the unit spheres of two Banach spaces, and let $\mathcal{M}$ be a convex subset of $S(E)$. Then $\mathcal{M}$ is a maximal proper face of $\mathcal{B}_E$ {\rm(}equivalently, a maximal convex subset of $S(E)${\rm)} if and only if $\Delta(\mathcal{M})$ is a maximal proper {\rm(}closed{\rm)} face of $\mathcal{B}_F$ {\rm(}equivalently, a maximal convex subset of $S(F)${\rm)}.
\end{prop}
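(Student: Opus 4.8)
Since the assertion is symmetric under interchanging $\Delta$ with $\Delta^{-1}$ and $E$ with $F$, it is enough to prove a single implication: if $\mathcal M$ is a maximal convex subset of $S(E)$, then $\Delta(\mathcal M)$ is a maximal convex subset of $S(F)$. The reverse implication follows by applying this to the surjective isometry $\Delta^{-1}$, and the two together give the stated equivalence. The point to keep in mind throughout is that $\Delta$ is a priori only an isometry for the distance $(u,v)\mapsto\|u-v\|$ on the sphere and is \emph{not} assumed to be affine; in particular it is not clear in advance that the image $\Delta(\mathcal M)$ is even convex. Hence the strategy cannot manipulate $\Delta$ directly and must instead produce a characterisation of ``being a maximal convex subset'' phrased exclusively in terms of the restricted metric, so that it is preserved by any surjective sphere-isometry.

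I would build this characterisation in two steps. First, recall the description --- already invoked in the text and resting on Eidelheit's separation theorem (the geometric Hahn--Banach theorem) --- of the maximal convex subsets of $S(E)$ as the maximal proper norm-closed faces of $\mathcal B_E$, i.e.\ the sets of the form $F_f=\{x\in S(E):\operatorname{Re} f(x)=1\}$ for suitable $f\in S(E^{*})$. Second, establish the bridge lemma that two points $x,y\in S(E)$ lie in a common maximal convex subset if and only if $\|x+y\|=2$. One direction is immediate: if $\operatorname{Re} f(x)=\operatorname{Re} f(y)=1$ then $2=\operatorname{Re} f(x+y)\le\|x+y\|\le 2$. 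For the converse, if $\|x+y\|=2$ choose, by Hahn--Banach, a functional $f\in S(E^{*})$ with $\operatorname{Re} f(x+y)=2$; since $\operatorname{Re} f(x),\operatorname{Re} f(y)\le 1$ this forces $\operatorname{Re} f(x)=\operatorname{Re} f(y)=1$, so $x,y\in F_f$, and $F_f$ extends by Zorn's lemma to a maximal convex subset containing both.

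The crux is then to convert the condition $\|x+y\|=2$ --- which still refers to the linear operation $+$ --- into a statement involving only the distances $\|u-v\|$, so that it becomes visibly $\Delta$-invariant; this is exactly the content of \cite[Lemma 5.1]{ChenDong2011} and \cite[Lemma 3.3]{Tan2016Mn}. The idea I would follow is to detect common-face membership metrically by comparing, for each pair $x,y$, the sets of their ``distance-two partners'' $\{z\in S(E):\|z-x\|=2\}$ together with the metric midpoints of $x$ and $y$ lying on the sphere, and then to recover each maximal convex subset as a maximal subset that is closed under the resulting, purely metric, relation. I expect this to be the main obstacle, and it is the genuine geometric heart of Cheng and Dong's argument: in the absence of strict convexity, distance-two partners and on-sphere metric midpoints are badly non-unique, so extracting from them a clean invariant that simultaneously pins down the face and forces the image $\Delta(\mathcal M)$ to be convex is delicate and cannot be done by soft arguments alone.

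Granting this metric characterisation, the conclusion is immediate: the defining property of a maximal convex subset is expressed entirely through the metric that $\Delta$ preserves, so $\mathcal M$ satisfies it precisely when $\Delta(\mathcal M)$ does. Thus $\Delta$ carries maximal convex subsets of $S(E)$ to maximal convex subsets of $S(F)$; combined with the symmetric role of $\Delta^{-1}$ noted at the outset, this establishes the desired equivalence.
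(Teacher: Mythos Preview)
The paper does not supply its own proof of this proposition; it is quoted from \cite{ChenDong2011,Tan2016Mn,Tanaka2014} without argument, so there is no in-text proof to compare your sketch against and it has to be assessed on its own.

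Your outline is sound through the bridge lemma ($x,y$ share a maximal convex subset iff $\|x+y\|=2$), but two genuine gaps remain. First, the step you yourself call the ``crux'' --- rewriting $\|x+y\|=2$ as a pure sphere-distance condition --- is never carried out; you allude to distance-two partners and metric midpoints but do not name the actual mechanism, namely the antipodal identity $\Delta(-x)=-\Delta(x)$ valid for every surjective isometry between unit spheres. With that identity in hand, $\|x+y\|=\|x-(-y)\|$ is an honest sphere distance and $\|\Delta(x)+\Delta(y)\|=\|x+y\|$ is immediate; without it your sketch stops exactly where the difficulty begins.

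Second, your closing move --- ``recover each maximal convex subset as a maximal subset closed under the relation $\|x+y\|=2$'' --- is false as stated. In $S(\ell_\infty^3)$ the three vertices $(1,1,-1)$, $(1,-1,1)$, $(-1,1,1)$ pairwise satisfy $\|u+v\|=2$ yet lie in no common face of the cube, so a maximal ``clique'' for this relation need not be convex, let alone a maximal convex subset. The cited arguments avoid this trap: with the polar operation $A^{\wedge}:=\{y\in S(E):\|y-a\|=2\ \forall a\in A\}$, which is manifestly metric, one checks that $M^{\wedge}=-M$ and hence $M=M^{\wedge\wedge}$ for every maximal convex $M$; combining this with the antipodal identity and a short maximality comparison against $\Delta^{-1}$ applied to a maximal convex subset of $S(F)$ yields that $\Delta(M)$ is again maximal convex. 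Your sketch would become a proof once these two points are filled in.
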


The next corollary is a consequence of Proposition \ref{p faces ChengDong11} and Lemma \ref{l max convex susbsets of C0T}.

\begin{cor}\label{c first application of faces}
Let $X$ and $Y$ be two principal $\mathbb{T}$-bundles, and let $\Delta : S(C^{\mathbb{T}}_0 (X)) \to S(C^{\mathbb{T}}_0 (Y))$ be a surjective isometry. Then for each $t_0\in X$ and each $\mu \in \mathbb{T}$ there exist elements $s_0\in Y$ and $\nu\in \mathbb{T}$ satisfying $$\Delta (F_{t_0, \mu}^{X}) = F_{s_0, \nu}^{Y}.$$
\end{cor}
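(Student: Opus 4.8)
The plan is to deduce Corollary~\ref{c first application of faces} directly from Proposition~\ref{p faces ChengDong11}, Lemma~\ref{l max convex susbsets of C0T}, and the uniqueness of labels recorded in \eqref{eq unique labels for X0}. First I would fix $t_0\in X$ and $\mu\in\mathbb{T}$ and observe that, by Lemma~\ref{l max convex susbsets of C0T}, the set $F_{t_0,\mu}^{X}$ is a maximal proper norm closed face of $\mathcal{B}_{C_0^{\mathbb{T}}(X)}$, equivalently a maximal convex subset of $S(C_0^{\mathbb{T}}(X))$. Applying Proposition~\ref{p faces ChengDong11} to the surjective isometry $\Delta$, the image $\Delta(F_{t_0,\mu}^{X})$ is a maximal convex subset of $S(C_0^{\mathbb{T}}(Y))$, hence a maximal proper norm closed face of $\mathcal{B}_{C_0^{\mathbb{T}}(Y)}$.

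Next I would invoke the other direction of Lemma~\ref{l max convex susbsets of C0T}, applied this time to the principal $\mathbb{T}$-bundle $Y$: every maximal convex subset of $S(C_0^{\mathbb{T}}(Y))$ has the form $F_{s_0,\nu}^{Y}$ for some $s_0\in Y$ and $\nu\in\mathbb{T}$. Thus there are $s_0\in Y$ and $\nu\in\mathbb{T}$ with $\Delta(F_{t_0,\mu}^{X}) = F_{s_0,\nu}^{Y}$, which is exactly the asserted identity. If one wishes to have canonical representatives, one can additionally fix maximal non-overlapping subsets $X_0\subseteq X$ and $Y_0\subseteq Y$ and use \eqref{eq unique labels for X0} to select the unique $s_0\in Y_0$ and $\nu\in\mathbb{T}$ with $\Delta(F_{t_0,\mu}^{X}) = F_{s_0,\nu}^{Y}$; but for the statement as phrased this refinement is not needed.

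There is essentially no obstacle here: the corollary is a bookkeeping combination of two results already proved. The only point requiring a line of care is that Proposition~\ref{p faces ChengDong11} is stated for \emph{maximal convex subsets of the unit sphere}, while Lemma~\ref{l max convex susbsets of C0T} is phrased for \emph{maximal proper norm closed faces of the closed unit ball}; the equivalence between these two notions was already recorded (via Eidelheit's separation theorem / the geometric Hahn--Banach theorem, following Tanaka) in the discussion preceding Lemma~\ref{l max convex susbsets of C0T}, so one simply cites that equivalence when passing between the two formulations. The same reasoning applied to $\Delta^{-1}$ shows that the correspondence $F_{t_0,\mu}^{X}\mapsto F_{s_0,\nu}^{Y}$ is in fact a bijection between the families of maximal proper faces, a remark that will be convenient in the subsequent lemmata leading to the proof of Theorem~\ref{t Tingleys problem for commutative JBstar triples}.
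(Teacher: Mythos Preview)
Your proposal is correct and follows essentially the same route as the paper, which simply notes that the corollary is a consequence of Proposition~\ref{p faces ChengDong11} and Lemma~\ref{l max convex susbsets of C0T}. One minor remark: the \emph{statement} of Lemma~\ref{l max convex susbsets of C0T} only asserts that every maximal convex subset is of the form $F_{t_0,\mu}$, whereas your first step uses the converse (that each $F_{t_0,\mu}^{X}$ is a maximal proper face); this converse is, however, exactly what the proof of that lemma establishes, so your citation is justified.
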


We have already given some arguments showing that the elements $s_0$ and $\nu$ in the conclusion of the previous corollary need not be unique. To avoid the problem we consider the next lemma.

\begin{lem}\label{l faces maximal non-overlapping}
	Let $X$ and $Y$ be two principal $\mathbb{T}$-bundles, and let $\Delta : S(C^{\mathbb{T}}_0 (X)) \to S(C^{\mathbb{T}}_0 (Y))$ be a surjective isometry.  Let $X_0$ be a maximal non-overlapping subset of $X$. Then for each $t_0$ in $X_0$ there exists a unique $s_0 = \phi (t_0)\in Y$ satisfying $$\Delta (F_{t_0, 1}^{X}) = F_{\phi(t_0), 1}^{Y}.$$  The mapping $\phi= \phi_{X_0}: X_0\to Y$ is well-defined and injective.
\end{lem}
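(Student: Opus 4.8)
The plan is to extract the point $s_0$ and the constant $\nu$ from Corollary \ref{c first application of faces} applied to $t_0 \in X_0$ with $\mu = 1$, so that $\Delta(F_{t_0,1}^X) = F_{s_0,\nu}^Y$, and then to \emph{normalize} the label. Using the identity $F_{s_0,\nu}^Y = F_{\nu s_0, 1}^Y$ (which is immediate from the definition $F_{t,\mu} = \{a : a(t) = \mu\}$ together with $\mathbb{T}$-equivariance of functions in $C_0^{\mathbb{T}}(Y)$: if $a(s_0) = \nu$ then $a(\nu s_0) = \nu a(s_0)$... wait, that gives $\nu^2$; the correct statement is that $a(s_0) = \nu \iff a(\nu^{-1}\cdot\nu s_0)$, i.e. one reparametrizes via $s_0' = \gamma s_0$, $\mu' = \gamma \mu$ as noted in the paragraph before \eqref{eq unique labels for X0}). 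Concretely, setting $s_0' := \nu s_0 \in Y$ — wait, one needs $\gamma\mu = 1$, so $\gamma = \nu^{-1}$ and $s_0' = \nu^{-1} s_0$ — we get $F_{s_0,\nu}^Y = F_{\nu^{-1}s_0, 1}^Y$, and we \emph{define} $\phi(t_0) := \nu^{-1}s_0$. This produces a point of $Y$ with $\Delta(F_{t_0,1}^X) = F_{\phi(t_0),1}^Y$, as required.

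Next I would address \emph{well-definedness}, i.e. that $\phi(t_0)$ does not depend on the choices made in invoking Corollary \ref{c first application of faces}. Suppose $F_{s_0,1}^Y = F_{s_1,1}^Y$ as subsets of $S(C_0^{\mathbb{T}}(Y))$. By Lemma \ref{l max convex susbsets of C0T} these are maximal convex subsets, hence proper faces, and by the uniqueness statement \eqref{eq unique labels for X0} (applied after fixing a maximal non-overlapping subset $Y_0$ of $Y$ and writing $s_i = \gamma_i \tilde{s}_i$ with $\tilde{s}_i \in Y_0$) the labels $(\tilde{s}_0, \overline{\gamma_0})$ and $(\tilde{s}_1, \overline{\gamma_1})$ must coincide, forcing $\tilde{s}_0 = \tilde{s}_1$ and $\gamma_0 = \gamma_1$, whence $s_0 = s_1$. (Alternatively and more directly: if $s_0 \neq s_1$, then either $s_1 \notin \mathbb{T}s_0$, in which case Remark \ref{r Urysohn's type lemma} supplies $h \in F_{s_0,1}^Y$ with $h(s_1) = 0 \neq 1$, so $h \notin F_{s_1,1}^Y$; or $s_1 = \gamma s_0$ for some $\gamma \in \mathbb{T}\setminus\{1\}$, and then any $a \in F_{s_0,1}^Y$ has $a(s_1) = \gamma a(s_0) = \gamma \neq 1$, so $F_{s_0,1}^Y \cap F_{s_1,1}^Y = \emptyset$ — contradicting that both contain, e.g., the Urysohn function at $s_0$.) Either way $s_0$ is uniquely determined by the face $F_{s_0,1}^Y$, so $\phi$ is well-defined.

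Finally, \emph{injectivity of $\phi$} follows from the same rigidity plus the injectivity of $\Delta$ on faces. If $t_0, t_1 \in X_0$ with $\phi(t_0) = \phi(t_1)$, then $\Delta(F_{t_0,1}^X) = F_{\phi(t_0),1}^Y = F_{\phi(t_1),1}^Y = \Delta(F_{t_1,1}^X)$; since $\Delta$ is a bijection of unit spheres it is injective on subsets, so $F_{t_0,1}^X = F_{t_1,1}^X$, and by the uniqueness of labels in \eqref{eq unique labels for X0} within the maximal non-overlapping set $X_0$ (where the representative in $X_0$ is $t_0$ itself, with $\mu = 1$) we conclude $t_0 = t_1$. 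I expect the only delicate point to be bookkeeping the reparametrization $F_{s,\nu} = F_{\nu^{-1}s,1}$ correctly and invoking \eqref{eq unique labels for X0} with the right maximal non-overlapping set on each side; there is no analytic difficulty, as all the substance has been front-loaded into Lemma \ref{l max convex susbsets of C0T}, Corollary \ref{c first application of faces}, and the labelling discussion around \eqref{eq unique labels for X0}.
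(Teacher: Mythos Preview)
Your proposal is correct and follows essentially the same approach as the paper: apply Corollary \ref{c first application of faces}, normalize via $F_{s_0,\nu}^Y = F_{\overline{\nu} s_0,1}^Y$ to define $\phi(t_0)$, and establish uniqueness (hence well-definedness and injectivity) by the direct Urysohn/equivariance dichotomy you give as your ``alternatively'' argument---which is exactly the paper's route. Your first alternative for well-definedness, invoking \eqref{eq unique labels for X0} after fixing an auxiliary $Y_0$, also works but is slightly more roundabout than the paper's direct case split on whether $s_1 \in \mathbb{T}s_0$.
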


\begin{proof} By Corollary \ref{c first application of faces} there exist $s\in Y$, $\mu\in \mathbb{T}$ such that \begin{equation}\label{eq existence of phit0} \Delta (F_{t_0,1}^X) = F_{s,\mu}^Y = F_{\overline{\mu} s, 1}^Y.
	\end{equation} Observe that the element $\overline{\mu} s$ satisfying the identity in \eqref{eq existence of phit0} is unique. Indeed, if $F_{ s_1, 1}^Y = F_{s_2, 1}^Y$ for some $s_1, s_2$ in $Y$ with $s_2\notin \mathbb{T} s_1$ we can find a function $a\in S(C_0^{\mathbb{T}}(Y))$ with $a(s_1)=1$ and $a(s_2) = 0$ (cf. Remark \ref{r Urysohn's type lemma}), which is impossible. If $s_2 = \nu s_1$ for some $\nu \in \mathbb{T}$, for each $a\in F_{ s_1, 1}^Y = F_{s_2, 1}^Y$, we have $1= a(s_1) = a(s_2) = \nu a(s_1) = \nu$, witnessing that $s_1 = s_2$. The first conclusion follows by setting $\phi (t_0) = \overline{\mu} s$ for the element  $\overline{\mu} s$ given by \eqref{eq existence of phit0}.\smallskip

The rest is clear from the previous arguments.
\end{proof}

Henceforth we fix a surjective isometry $\Delta : S(C^{\mathbb{T}}_0 (X)) \to S(C^{\mathbb{T}}_0 (Y))$, where $X$ and $Y$ are two principal $\mathbb{T}$-bundles, $X_0\subseteq X$ a maximal non-overlapping subset and $\phi= \phi_{X_0}: X_0\to Y$ the injective mapping given by Lemma \ref{l faces maximal non-overlapping}.\smallskip

The next step in our strategy isolates a crucial property of $\phi$.

\begin{lem}\label{l functions vanishing at phit0} Let $t_0$ be an element in $X_0$, and let $a$ be an element in $S(C_0^{\mathbb{T}}(X))$ satisfying $a(t_0)=0$. Then $\Delta (a) (\phi(t_0)) =0$.
\end{lem}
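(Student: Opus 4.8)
The plan is to exploit the rigidity of how a surjective isometry between unit spheres acts on maximal proper faces, as encoded in Proposition \ref{p faces ChengDong11}, Corollary \ref{c first application of faces}, and Lemma \ref{l faces maximal non-overlapping}, together with the Urysohn-type construction of Remark \ref{r Urysohn's type lemma}. The statement $\Delta(a)(\phi(t_0)) = 0$ is equivalent to saying that $\Delta(a)$ does \emph{not} belong to any face $F^Y_{\phi(t_0),\nu}$ with $\nu \in \mathbb{T}$; so the strategy is to argue by contradiction, assuming $\Delta(a)(\phi(t_0)) = \nu$ for some $\nu \in \mathbb{T}$, and to derive a contradiction by producing an element of $S(C_0^{\mathbb{T}}(X))$ that behaves incompatibly with respect to distances.

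First I would set up the contradiction hypothesis: suppose $|\Delta(a)(\phi(t_0))| = 1$, and write $\nu = \Delta(a)(\phi(t_0)) \in \mathbb{T}$. Then $\Delta(a) \in F^Y_{\phi(t_0),\nu}$. Since $\Delta$ is a surjective isometry, so is $\Delta^{-1}$, and by Corollary \ref{c first application of faces} applied to $\Delta^{-1}$, we have $\Delta^{-1}(F^Y_{\phi(t_0),\nu}) = F^X_{r_0,\eta}$ for some $r_0 \in X$, $\eta \in \mathbb{T}$. Now the key point: by Lemma \ref{l faces maximal non-overlapping} we know $\Delta(F^X_{t_0,1}) = F^Y_{\phi(t_0),1}$, hence $\Delta^{-1}(F^Y_{\phi(t_0),1}) = F^X_{t_0,1}$; multiplying by scalars in $\mathbb{T}$ (which commute with $\Delta$'s action on faces via the obvious relabeling $F^Y_{\phi(t_0),\nu} = \nu F^Y_{\phi(t_0),1}$ in the sense of the sphere, though one must be careful since $\Delta$ need not be homogeneous) one expects $\Delta^{-1}(F^Y_{\phi(t_0),\nu})$ to be a maximal face ``at the same point'' $t_0$, i.e. $F^X_{t_0,\eta}$ for some $\eta\in\mathbb T$. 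Granting this, $a \in \Delta^{-1}(F^Y_{\phi(t_0),\nu}) = F^X_{t_0,\eta}$ forces $a(t_0) = \eta \in \mathbb{T}$, contradicting $a(t_0) = 0$.

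The main obstacle is precisely justifying that $\Delta^{-1}(F^Y_{\phi(t_0),\nu})$ is supported at $t_0$ rather than at some unrelated point of $X$ — equivalently, that the point-part of the face is preserved by $\Delta$ up to relabeling even when we change the scalar $\nu$. The cleanest route is metric: I would use that two maximal faces $F^X_{s_1,\mu_1}$ and $F^X_{s_2,\mu_2}$ are ``close'' (their distance $\inf\{\|f-g\| : f\in F^X_{s_1,\mu_1}, g\in F^X_{s_2,\mu_2}\}$ is small) exactly when $s_1,s_2$ are in overlapping small neighbourhoods, using Remark \ref{r Urysohn's type lemma} to build separating functions; more simply, when $s_1, s_2$ lie in the same $\mathbb{T}$-orbit the faces $F^X_{s,\mu}$ for varying $\mu$ are mutually at distance $2$ unless they coincide, while $F^X_{t_0,1}$ and $F^X_{t_0,\eta}$ ``share'' the evaluation point. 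Since $\Delta$ preserves distances between faces, and since $F^Y_{\phi(t_0),1}$ and $F^Y_{\phi(t_0),\nu}$ are both attached to the orbit $\mathbb{T}\phi(t_0)$, their $\Delta^{-1}$-images $F^X_{t_0,1}$ and $F^X_{r_0,\eta}$ must likewise be attached to the orbit $\mathbb{T}t_0$; combined with the separation argument via Urysohn functions this pins down $r_0 \in \mathbb{T}t_0$, and then $a(t_0) = 0$ together with $a \in F^X_{r_0,\eta}$ (so $|a(r_0)| = 1$, $r_0 = \gamma t_0$, hence $|a(t_0)| = |a(\gamma t_0)| = 1$ by $\mathbb{T}$-equivariance of $|a|$) yields the contradiction. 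I would therefore organize the proof as: (i) reduce to the contradiction hypothesis $|\Delta(a)(\phi(t_0))|=1$; (ii) apply Corollary \ref{c first application of faces} to $\Delta^{-1}$ to get $\Delta^{-1}(F^Y_{\phi(t_0),\nu}) = F^X_{r_0,\eta}$; (iii) use a distance-between-faces computation (built on Remark \ref{r Urysohn's type lemma}) to show $r_0 \in \mathbb{T}t_0$; (iv) conclude $|a(t_0)| = 1$, contradicting $a(t_0) = 0$.
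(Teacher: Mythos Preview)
There is a genuine gap at the very first step. You claim that ``$\Delta(a)(\phi(t_0)) = 0$ is equivalent to saying that $\Delta(a)$ does not belong to any face $F^Y_{\phi(t_0),\nu}$ with $\nu \in \mathbb{T}$.'' This is false: the latter condition only says $|\Delta(a)(\phi(t_0))| < 1$, not that the value is zero. Consequently your contradiction hypothesis $|\Delta(a)(\phi(t_0))| = 1$ is not the negation of the desired conclusion, and even if steps (ii)--(iv) went through perfectly you would only have proved $|\Delta(a)(\phi(t_0))| < 1$. The intermediate values $0 < |\Delta(a)(\phi(t_0))| < 1$ are never addressed.

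The paper's argument is quite different and does pin down the exact value $0$. After reducing to the case where $a$ vanishes on a $\mathbb{T}$-invariant neighbourhood $U$ of $t_0$, one picks (via Remark~\ref{r Urysohn's type lemma}) a function $b \in F^X_{t_0,1}$ supported in $U$, so that $a$ and $b$ are disjoint and $\|a \pm b\| = 1$. Using Mori's result that $\Delta(-F^X_{t_0,1}) = -\Delta(F^X_{t_0,1})$ one gets $\Delta(-b)(\phi(t_0)) = -1$, and of course $\Delta(b)(\phi(t_0)) = 1$. The two distance equalities then give
\[
|\Delta(a)(\phi(t_0)) + 1| \leq 1 \quad \hbox{and} \quad |\Delta(a)(\phi(t_0)) - 1| \leq 1,
\]
and the only complex number satisfying both is $0$. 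Your face-based strategy cannot recover this conclusion without an additional argument handling the non-unimodular case; moreover, your step~(iii) (showing $r_0 \in \mathbb{T}t_0$ via ``distance between faces'') is essentially the content of Proposition~\ref{p tildephi and phi are self determined}, whose setup in the paper relies on Lemma~\ref{Y0 is a maximal non-overlapping}, which in turn uses the present lemma --- so you would also need to supply an independent justification there to avoid circularity.
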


\begin{proof} Since a continuous function in $S(C_0^{\mathbb{T}}(X))$ vanishing at $t_0$ can be approximated in norm by functions in  $S(C_0^{\mathbb{T}}(X))$ vanishing on a neighbourhood of $t_0$, we can assume that $a$ vanishes on a $\mathbb{T}$-invariant open neighbourhood $U$ of $t_0$.\smallskip
	
Let us consider a function $b\in S(C_0^{\mathbb{T}}(X))$ satisfying $b(t_0)=1$ and $b|_{X\backslash U} \equiv 0.$	Clearly, $b\in F_{t_0,1}$, and by orthogonality of $a$ and $b$ we have $\|a\pm b\| = 1$.\smallskip

By \cite[Proposition 2.3$(a)$]{Mori2017} we have $\Delta (-F_{t_0,1}) = -\Delta (F_{t_0,1})$. Therefore, there exists $c\in F_{t_0,1}$ such that $-\Delta (c) = \Delta (-b)$, and hence $\Delta (-b) (\phi (t_0)) = -\Delta (c) (\phi(t_0)) = -1$. It follows from the assumptions that
$$\begin{aligned} 1 &= \|a+ b\| = \|\Delta (a) - \Delta(-b)\|\geq |\Delta (a) (\phi(t_0)) - \Delta(-b) (\phi(t_0)) | \\ &= |\Delta (a) (\phi(t_0)) +1 |,\\
1 &= \|a- b\| = \|\Delta (a) - \Delta(b)\|\geq |\Delta (a) (\phi(t_0)) - \Delta(b) (\phi(t_0)) | \\ &= |\Delta (a) (\phi(t_0)) -1 |
\end{aligned}$$ witnessing that $\Delta (a) (\phi(t_0)) =0$.
\end{proof}

\begin{lem}\label{Y0 is a maximal non-overlapping}
The set $Y_0 = \{\phi(t_0): t_0\in X_0\}=\phi(X_0)\subseteq Y$ is a maximal non-overlapping subset of $Y$, and hence the set $\{\delta_{s} : s\in Y_0\}$ is norming in $C_0^{\mathbb{T}} (Y)$. Furthermore, the mapping $\phi : X_0\to Y_0$ is a bijection satisfying $$\Delta(F_{t_0,1}^X) = F_{\phi(t_0),1}^Y, \hbox{ for all } t_0 \in X_0,$$ and $\phi^{-1}$ is precisely the mapping given by Lemma \ref{l faces maximal non-overlapping} for  $\Delta^{-1}$ and $Y_0$.
\end{lem}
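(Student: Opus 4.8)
The plan is to establish the four assertions in turn, using throughout the face correspondence of Corollary~\ref{c first application of faces}, the defining property of $\phi$ from Lemma~\ref{l faces maximal non-overlapping}, the Urysohn-type functions of Remark~\ref{r Urysohn's type lemma}, and the fact (argued as in Lemma~\ref{l faces maximal non-overlapping}; cf.~\eqref{eq unique labels for X0}) that a maximal proper norm closed face of $\mathcal{B}_{C^{\mathbb{T}}_0(Y)}$ determines the pair $(s_0,\mu)\in Y\times\mathbb{T}$ for which it equals $F_{s_0,\mu}^Y$, modulo the identification $F_{s_0,\mu}^Y=F_{\gamma s_0,\gamma\mu}^Y$.

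First I would check that $Y_0=\phi(X_0)$ is non-overlapping. Let $\phi(t_1)=\gamma\,\phi(t_2)$ with $t_1,t_2\in X_0$ and $\gamma\in\mathbb{T}$. If $t_1=t_2$ any $a\in F_{t_1,1}^X$ serves; if $t_1\neq t_2$ then $t_1\notin\mathbb{T}t_2$ (as $X_0$ is non-overlapping), the orbits $\mathbb{T}t_1,\mathbb{T}t_2$ are disjoint compacta, and Remark~\ref{r Urysohn's type lemma}, applied inside disjoint $\mathbb{T}$-invariant open neighbourhoods of these orbits, produces $a\in S(C^{\mathbb{T}}_0(X))$ with $a(t_1)=a(t_2)=1$. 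In either case $a\in F_{t_1,1}^X\cap F_{t_2,1}^X$, so $u:=\Delta(a)$ lies in $\Delta(F_{t_1,1}^X)\cap\Delta(F_{t_2,1}^X)=F_{\phi(t_1),1}^Y\cap F_{\phi(t_2),1}^Y$; hence $1=u(\phi(t_1))=u(\gamma\phi(t_2))=\gamma\,u(\phi(t_2))=\gamma$, so $\gamma=1$ and $\phi(t_1)=\phi(t_2)$. That is exactly the non-overlapping property of $Y_0$.

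The main point is the maximality, which I would obtain by proving $\mathbb{T}Y_0=Y$. Fix $s\in Y$. Applying Corollary~\ref{c first application of faces} to $\Delta^{-1}$ and writing the resulting base point as an element of $\mathbb{T}X_0=X$ (valid since $X_0$ is maximal non-overlapping), we may arrange $\Delta(F_{t_0,\mu}^X)=F_{s,1}^Y$ for some $t_0\in X_0$ and $\mu\in\mathbb{T}$. The key elementary observations are: (i) if $p\notin\mathbb{T}q$ then $\operatorname{dist}(F_{p,\alpha},F_{q,\beta})\le 1$ for all $\alpha,\beta\in\mathbb{T}$ (test with two Urysohn functions $\alpha h_p$ and $\beta h_q$ whose supports lie in disjoint $\mathbb{T}$-invariant sets), valid in both $X$ and $Y$; (ii) $\operatorname{dist}(F_{p,\alpha},F_{p,\beta})\ge|\alpha-\beta|$ (evaluate at $p$); and (iii) a short arc count on the unit circle produces $\nu\in\mathbb{T}$ with $|\mu-\nu|>1$ and $|\nu-1|>1$, since the two ``bad'' arcs $\{|\cdot-\mu|\le 1\}$ and $\{|\cdot-1|\le 1\}$ each subtend an angle $2\pi/3$ and so cannot cover $\mathbb{T}$. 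Consequently the three maximal faces $F_{t_0,\mu}^X$, $F_{t_0,\nu}^X$, $F_{t_0,1}^X$ have consecutive mutual distances $>1$ by (ii), hence so do their $\Delta$-images (a surjective isometry of spheres preserves distances between subsets), and these images are, by Corollary~\ref{c first application of faces}, maximal faces with base points $s$ (from $\Delta(F_{t_0,\mu}^X)=F_{s,1}^Y$), some $s_1\in Y$, and $\phi(t_0)$ (from $\Delta(F_{t_0,1}^X)=F_{\phi(t_0),1}^Y$). By (i) this forces $s\in\mathbb{T}s_1$ and $s_1\in\mathbb{T}\phi(t_0)$, so $s\in\mathbb{T}\phi(t_0)\subseteq\mathbb{T}Y_0$. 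As $s$ was arbitrary, $\mathbb{T}Y_0=Y$, i.e. $Y_0$ is maximal non-overlapping; the norming property of $\{\delta_s:s\in Y_0\}$ then follows as recorded for $X_0$ before Lemma~\ref{l max convex susbsets of C0T}, since every $u\in C^{\mathbb{T}}_0(Y)$ attains its norm at some point of $Y$, hence, by $\mathbb{T}$-equivariance, at some point of $Y_0$.

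Finally, $\phi\colon X_0\to Y_0$ is injective by Lemma~\ref{l faces maximal non-overlapping} and onto by construction, hence a bijection with $\Delta(F_{t_0,1}^X)=F_{\phi(t_0),1}^Y$. Since $Y_0$ is now a maximal non-overlapping subset of $Y$, Lemma~\ref{l faces maximal non-overlapping} applies to the surjective isometry $\Delta^{-1}$ and $Y_0$, giving an injective $\psi\colon Y_0\to X$ with $\Delta^{-1}(F_{s_0,1}^Y)=F_{\psi(s_0),1}^X$ for all $s_0\in Y_0$. Taking $s_0=\phi(t_0)$ and using $\Delta^{-1}(F_{\phi(t_0),1}^Y)=F_{t_0,1}^X$ gives $F_{\psi(\phi(t_0)),1}^X=F_{t_0,1}^X$, whence $\psi(\phi(t_0))=t_0$ by label uniqueness; thus $\psi\circ\phi=\operatorname{id}_{X_0}$ and, $\phi$ being a bijection onto $Y_0$, $\psi=\phi^{-1}$. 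I expect the delicate step to be the maximality argument: recognizing that $\operatorname{dist}(F_{p,\alpha},F_{q,\beta})$ separates ``distinct orbits'' ($\le 1$) from ``same orbit, rotated beyond a right angle'' ($>1$), and that any two faces over a common orbit are joined by a two-step chain of such far-apart pairs, so that distance-preservation of $\Delta$ transports the orbit structure; the remaining assertions are bookkeeping with the face correspondence and label uniqueness.
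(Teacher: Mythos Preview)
Your proof is correct, but both the non-overlapping and the maximality steps follow a genuinely different route from the paper.

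For the non-overlapping claim the paper invokes Lemma~\ref{l functions vanishing at phit0}: given distinct $t_1,t_2\in X_0$ it picks $a\in F_{t_1,1}^X$ with $a(t_2)=0$ and concludes $\Delta(a)(\phi(t_2))=0$, whence $\phi(t_2)\notin\mathbb{T}\phi(t_1)$. You bypass that lemma entirely by building a single $a=h_1+h_2$ (sum of two disjointly supported Urysohn functions) lying in $F_{t_1,1}^X\cap F_{t_2,1}^X$ and reading off $\gamma=1$ from the $\mathbb{T}$-equivariance of $\Delta(a)$. For maximality the paper extends $Y_0$ to a maximal non-overlapping $\tilde{Y}_0$ via Zorn's lemma, applies Lemma~\ref{l faces maximal non-overlapping} to $\Delta^{-1}$ on $\tilde{Y}_0$, and then uses Lemma~\ref{l functions vanishing at phit0} once more to rule out any $s_3\in\tilde{Y}_0\setminus Y_0$. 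Your argument instead transports the orbit structure directly through the metric: the dichotomy $\operatorname{dist}(F_{p,\alpha},F_{q,\beta})\le 1$ for $p\notin\mathbb{T}q$ versus $\operatorname{dist}(F_{p,\alpha},F_{p,\beta})\ge|\alpha-\beta|$, together with the ``stepping stone'' $\nu$ chosen so that both $|\mu-\nu|>1$ and $|\nu-1|>1$, forces the base points $s$, $s_1$, $\phi(t_0)$ to lie on a common $\mathbb{T}$-orbit. This is exactly the device the paper uses in Section~\ref{sec:function algebras} (Lemmata~\ref{lem11}--\ref{lem13}) but does \emph{not} reuse in Section~\ref{sec: abelian JBstar triples}; you have noticed that it adapts cleanly to the $C_0^{\mathbb{T}}$ setting. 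The upshot is that your proof of this lemma is self-contained (no appeal to Lemma~\ref{l functions vanishing at phit0}, no Zorn), whereas the paper's proof is shorter to state because it leans on machinery developed just before. The final paragraph, identifying $\psi=\phi^{-1}$ via label uniqueness, matches the paper's argument.
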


\begin{proof} We shall first show that $Y_0$ is non-overlapping. Since $\phi$ is injective we can suppose that we have $\phi(t_1)\neq \phi(t_2)$ in $Y_0$ with $t_1\neq t_2$ in $X_0$. Since $X_0$ is non-overlapping we can find $a\in F^X_{t_1,1}$ with $a(t_2)=0$ (cf. Remark \ref{r Urysohn's type lemma}). Lemma \ref{l functions vanishing at phit0} implies that $\Delta (a) (\phi(t_2)) =0$, and consequently $\phi(t_2)\notin \mathbb{T} \phi(t_1)$ because $\Delta(a)\in \Delta (F^Y_{\phi(t_1),1})$. This shows that $Y_0$ is non-overlapping.\smallskip
	
Let us find, via Zorn's lemma, a maximal non-overlapping subset of $Y,$ $\tilde{Y}_0 \supset Y_0$. By applying Lemma \ref{l faces maximal non-overlapping} to $\Delta^{-1}$ and $\tilde{Y}_0$ we deduce the existence of an injective mapping $\psi: \tilde{Y}_0 \to X$ satisfying $$\Delta^{-1} (F^Y_{s_0,1}) = F^X_{\psi(s_0),1}, \hbox{ for all } s_0\in \tilde{Y}_0.$$ In particular we have $$\Delta(F^X_{\psi(\phi(t_0)),1}) = F^Y_{\phi(t_0),1} = \Delta (F^X_{t_0,1}), \hbox{ for all }  t_0\in X_0,$$ which implies that  $\psi(\phi(t_0)) = t_0$ for all $t_0\in X_0$.\smallskip

By the first part of our argument, applied to $\Delta^{-1}$ and $\tilde{Y}_0,$ we know that $\psi (\tilde{Y}_0)$ must be a non-overlapping subset of $X$ containing $X_0$. The maximality of $X_0$ implies that $\psi (\tilde{Y}_0) = X_0.$\smallskip

If there exists $s_3\in \tilde{Y}_0\backslash Y_0$, by the maximality of $X_0$, and the definition of $\psi$, there exist $t_0\in X_0$ and $\nu \in \mathbb{T}$ such that $$\Delta^{-1}  ( F_{s_3,1}^Y) =  F_{\psi(s_3),1}^X =  F_{\nu t_0,1}^X.$$ Having in mind that $\tilde{Y}_0$ is non-overlapping, $s_3\in \tilde{Y}_0\backslash Y_0$ and $\phi(t_0)\in Y_0$, we can find $\tilde{a}\in F_{s_3,1}^Y$ vanishing at $\phi(t_0)$ (cf. Remark \ref{r Urysohn's type lemma}). Lemma \ref{l functions vanishing at phit0}, applied to $\Delta^{-1}$, $\tilde{a}$ and $\phi(t_0)$, implies that $$0=\Delta^{-1} (\tilde{a}) (\psi\phi(t_0)) = \Delta^{-1} (\tilde{a}) (t_0).$$ However $\Delta^{-1} (\tilde{a}) \in  \Delta^{-1}  ( F_{s_3,1}^Y) =  F_{\nu t_0,1}^X$, and hence $1= \Delta^{-1} (\tilde{a}) (\nu t_0 ) = \nu \Delta^{-1} (\tilde{a}) ( t_0 ) =0$, leading to a contradiction.
\end{proof}

Having in mind that $X_0$ and $Y_0$ are maximal non-overlapping subsets of $X$ and $Y$, respectively, and considering the property we commented in \eqref{eq unique labels for X0}, we deduce the next fact:
\begin{equation}\label{eq definition tildephi sigma}\begin{aligned} &\hbox{for each } t_0\in X_0, \hbox{ {and} } \mu\in \mathbb{T} \hbox{ there exist unique } \tilde{\phi} (t_0,\mu)\in Y_0 \\ &\hbox{ and } \sigma(t_0, \mu)\in \mathbb{T} \hbox{ satisfying } \Delta(F^{X}_{t_0, \mu}) = F^{Y}_{\tilde{\phi} (t_0,\mu), \sigma(t_0, \mu)}.
\end{aligned}
\end{equation}
The mappings $\tilde{\phi} : X_0\times \mathbb{T}\to Y_0$ and $\sigma : X_0\times \mathbb{T}\to \mathbb{T}$ are well-defined. We shall make use of these mappings in the subsequent results.

\begin{lem}\label{l sigma is odd and tildephi is even} The mappings $\sigma$ and $\tilde{\phi}$ satisfy $$\sigma (t_0, -\mu) = -\sigma (t_0, \mu), \hbox{ and } \tilde{\phi} (t_0, -\mu) = \tilde{\phi} (t_0, \mu),$$
for all $t_0\in X_0$ and  $\mu\in \mathbb{T}$. 	
\end{lem}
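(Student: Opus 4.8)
The plan is to read off both identities at once from the compatibility of $\Delta$ with the antipodal map, combined with the uniqueness of the labelling of maximal faces recorded in \eqref{eq unique labels for X0}. First I would note the purely algebraic identity
$$-F_{t_0,\mu}^{X}=\{-a : a\in\mathcal{B}_{C^{\mathbb{T}}_0 (X)},\ a(t_0)=\mu\}=\{b\in\mathcal{B}_{C^{\mathbb{T}}_0 (X)} : b(t_0)=-\mu\}=F_{t_0,-\mu}^{X},$$
valid for every $t_0\in X$ and $\mu\in\mathbb{T}$ (note that $-\mu\in\mathbb{T}$), together with its verbatim analogue $-F_{s_0,\nu}^{Y}=F_{s_0,-\nu}^{Y}$ in $Y$.

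Next I would invoke \cite[Proposition 2.3$(a)$]{Mori2017}---the very same ingredient already used in the proof of Lemma~\ref{l functions vanishing at phit0}---which gives $\Delta\left(-F_{t_0,\mu}^{X}\right)=-\Delta\left(F_{t_0,\mu}^{X}\right)$. Chaining this with the algebraic identities above and with the defining relation \eqref{eq definition tildephi sigma} of $\tilde{\phi}$ and $\sigma$ yields
$$\Delta\left(F_{t_0,-\mu}^{X}\right)=\Delta\left(-F_{t_0,\mu}^{X}\right)=-\Delta\left(F_{t_0,\mu}^{X}\right)=-F_{\tilde{\phi}(t_0,\mu),\sigma(t_0,\mu)}^{Y}=F_{\tilde{\phi}(t_0,\mu),-\sigma(t_0,\mu)}^{Y},$$
whereas applying \eqref{eq definition tildephi sigma} directly to the pair $(t_0,-\mu)$ reads $\Delta\left(F_{t_0,-\mu}^{X}\right)=F_{\tilde{\phi}(t_0,-\mu),\sigma(t_0,-\mu)}^{Y}$.

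To finish I would compare these two descriptions of the maximal face $\Delta\left(F_{t_0,-\mu}^{X}\right)$. Since $\tilde{\phi}(t_0,\mu),\tilde{\phi}(t_0,-\mu)\in Y_0$ and $Y_0$ is a maximal non-overlapping subset of $Y$ by Lemma~\ref{Y0 is a maximal non-overlapping}, the uniqueness of the labelling recorded in \eqref{eq unique labels for X0} (applied with $Y$ and $Y_0$ in place of $X$ and $X_0$) forces the two labels to coincide, i.e. $\tilde{\phi}(t_0,-\mu)=\tilde{\phi}(t_0,\mu)$ and $\sigma(t_0,-\mu)=-\sigma(t_0,\mu)$, which is the assertion. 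I do not expect a genuine obstacle here; the argument is essentially bookkeeping, and the only point requiring a little care is to make sure that $F_{\tilde{\phi}(t_0,\mu),-\sigma(t_0,\mu)}^{Y}$ is exhibited in admissible form---first coordinate in $Y_0$, second coordinate in $\mathbb{T}$---so that the uniqueness statement \eqref{eq unique labels for X0} is applicable.
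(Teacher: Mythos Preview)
Your proposal is correct and follows essentially the same approach as the paper: both derive the chain
$F^{Y}_{\tilde{\phi}(t_0,-\mu),\sigma(t_0,-\mu)}=\Delta(F^{X}_{t_0,-\mu})=-\Delta(F^{X}_{t_0,\mu})=F^{Y}_{\tilde{\phi}(t_0,\mu),-\sigma(t_0,\mu)}$
via \cite[Proposition~2.3$(a)$]{Mori2017} and then extract the two identities from the equality of faces. The only cosmetic difference is that the paper redoes the uniqueness step by hand (if $\tilde{\phi}(t_0,-\mu)\neq\tilde{\phi}(t_0,\mu)$ in $Y_0$, build via Remark~\ref{r Urysohn's type lemma} a function in one face vanishing at the other point, a contradiction), whereas you invoke the already recorded uniqueness \eqref{eq unique labels for X0} together with Lemma~\ref{Y0 is a maximal non-overlapping}; this is slightly cleaner but not a different argument.
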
	

\begin{proof} A new application of \cite[Proposition 2.3$(a)$]{Mori2017} gives $$\begin{aligned}
F^{Y}_{\tilde{\phi} (t_0,-\mu), \sigma(t_0, -\mu)} &= \Delta(F^{X}_{t_0, -\mu}) = \Delta(- F^{X}_{t_0, \mu}) = - \Delta(F^{X}_{t_0, \mu}) \\
&= - F^{Y}_{\tilde{\phi} (t_0,\mu), \sigma(t_0, \mu)} = F^{Y}_{\tilde{\phi} (t_0,\mu), -\sigma(t_0, \mu)}.
	\end{aligned}$$
	
If 	$\tilde{\phi} (t_0,-\mu)\neq \tilde{\phi} (t_0,\mu)$ in $Y_0$, there exists a function $\tilde{a}$ in $F^{Y}_{\tilde{\phi} (t_0,-\mu), \sigma(t_0, -\mu)}$ vanishing at $\tilde{\phi} (t_0,\mu)$ (cf. Remark \ref{r Urysohn's type lemma}), contradicting the previous identity. Therefore $\tilde{\phi} (t_0,-\mu)= \tilde{\phi} (t_0,\mu)$ and $ -\sigma(t_0, \mu) =  \sigma(t_0, -\mu)$.
\end{proof}

\begin{prop}\label{p tildephi and phi are self determined}
The identity $$\tilde{\phi} (t_0, \mu) = \tilde{\phi} (t_0,1) = \phi (t_0)$$ holds for all $t_0\in X_0$ and $\mu \in \mathbb{T}$.
\end{prop}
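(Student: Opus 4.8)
The plan is to follow the pattern of the proof of Lemma~\ref{lem13}, replacing the peaking functions used there by the Urysohn-type functions produced in Remark~\ref{r Urysohn's type lemma}.

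First I would dispose of the identity $\tilde{\phi}(t_0,1)=\phi(t_0)$ (and, along the way, $\sigma(t_0,1)=1$): by \eqref{eq definition tildephi sigma} and Lemma~\ref{Y0 is a maximal non-overlapping} we have
$F^{Y}_{\tilde{\phi}(t_0,1),\sigma(t_0,1)}=\Delta(F^{X}_{t_0,1})=F^{Y}_{\phi(t_0),1}$,
where both $\tilde{\phi}(t_0,1)$ and $\phi(t_0)$ lie in the maximal non-overlapping set $Y_0$; hence the uniqueness of labels recorded in \eqref{eq unique labels for X0} forces $\tilde{\phi}(t_0,1)=\phi(t_0)$ and $\sigma(t_0,1)=1$. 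It therefore remains to prove $\tilde{\phi}(t_0,\mu)=\tilde{\phi}(t_0,1)$ for every $\mu\in\mathbb{T}$.

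Next, the case $\Re\mu\le 0$. Write $y=\tilde{\phi}(t_0,\mu)$ and $y'=\tilde{\phi}(t_0,1)$ and suppose, towards a contradiction, that $y\neq y'$. Since $Y_0$ is non-overlapping, $\mathbb{T} y\cap\mathbb{T} y'=\emptyset$; as these are disjoint compact subsets of the Hausdorff space $Y$ on which $\mathbb{T}$ acts continuously, they can be separated by disjoint $\mathbb{T}$-invariant open sets $W\ni y$ and $W'\ni y'$, each contained in a compact $\mathbb{T}$-invariant subset of $Y$ (use local compactness of $Y$, the compactness of $Y\cup\{0\}$, and $\mathbb{T}$-saturation of a relatively compact neighbourhood). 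Remark~\ref{r Urysohn's type lemma} then provides $u\in F^{Y}_{y,\sigma(t_0,\mu)}$ with $u\equiv 0$ on $Y\backslash W$ and $v\in F^{Y}_{y',\sigma(t_0,1)}$ with $v\equiv 0$ on $Y\backslash W'$; since $W\cap W'=\emptyset$ the functions $u$ and $v$ have disjoint supports, so $\|u-v\|=\max\{\|u\|,\|v\|\}=1$. Applying $\Delta^{-1}$ to \eqref{eq definition tildephi sigma} we get $\Delta^{-1}(u)\in F^{X}_{t_0,\mu}$ and $\Delta^{-1}(v)\in F^{X}_{t_0,1}$, hence $\Delta^{-1}(u)(t_0)=\mu$ and $\Delta^{-1}(v)(t_0)=1$, and therefore
$$\sqrt{2}\le|\mu-1|=|\Delta^{-1}(u)(t_0)-\Delta^{-1}(v)(t_0)|\le\|\Delta^{-1}(u)-\Delta^{-1}(v)\|=\|u-v\|=1,$$
which is absurd. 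Thus $y=y'$ whenever $\Re\mu\le 0$.

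Finally, the case $\Re\mu>0$ is reduced to the previous one exactly as in Lemma~\ref{lem13}: since $\Re(-\mu)<0$, the step above gives $\tilde{\phi}(t_0,-\mu)=\tilde{\phi}(t_0,1)$, and Lemma~\ref{l sigma is odd and tildephi is even} yields $\tilde{\phi}(t_0,\mu)=\tilde{\phi}(t_0,-\mu)=\tilde{\phi}(t_0,1)$. Combining the three parts gives $\tilde{\phi}(t_0,\mu)=\tilde{\phi}(t_0,1)=\phi(t_0)$ for all $t_0\in X_0$ and $\mu\in\mathbb{T}$. The only genuinely delicate point is the topological separation of the disjoint orbits $\mathbb{T} y$ and $\mathbb{T} y'$ by disjoint $\mathbb{T}$-invariant, relatively compact open sets so that Remark~\ref{r Urysohn's type lemma} is applicable; once this is in place, the disjoint-support estimate $\|u-v\|=1<\sqrt{2}$ makes the contradiction scheme from Section~\ref{sec:function algebras} go through verbatim.
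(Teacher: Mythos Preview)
Your proof is correct and follows essentially the same approach as the paper: both argue by contradiction assuming $\tilde{\phi}(t_0,\mu)\neq\phi(t_0)$, produce disjointly supported functions in the two faces $F^Y_{\tilde{\phi}(t_0,\mu),\sigma(t_0,\mu)}$ and $F^Y_{\phi(t_0),1}$ via Remark~\ref{r Urysohn's type lemma}, and derive $|1-\mu|\leq 1$ from the isometry, which is impossible for $\Re\mu\leq 0$, then handle $\Re\mu>0$ by passing to $-\mu$ through Lemma~\ref{l sigma is odd and tildephi is even}. You are a bit more explicit than the paper about the $\mathbb{T}$-invariance of the separating neighbourhoods and about the bound $|1-\mu|\geq\sqrt{2}$, but these are cosmetic differences.
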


\begin{proof} The last equality follows from \eqref{eq definition tildephi sigma} and Lemma \ref{l faces maximal non-overlapping}.\smallskip
	
Let us observe that $\tilde{\phi} (t_0, \mu), \phi (t_0)\in Y_0,$ and the latter is a maximal non-overlapping set of $Y$. Thus, if  $\tilde{\phi} (t_0, \mu)\neq \phi (t_0)$, we can find two open disjoint neighbourhoods of these two points, and hence by Remark \ref{r Urysohn's type lemma} there exist two orthogonal or disjoint functions $\tilde{a}\in F^{Y}_{\tilde{\phi} (t_0,\mu),\sigma(t_0, \mu)}$ and $\tilde{b}\in F^{Y}_{{\phi} (t_0),1}$, in particular $\|\tilde{a}\pm \tilde{b}\| =1$. It follows from the defining properties of $\tilde{\phi}$ and $\phi$ that $a= \Delta^{-1} (\tilde{a})\in F^{X}_{t_0, \mu}$ and $b= \Delta^{-1} (\tilde{b})\in F^{X}_{t_0, 1}$.\smallskip

If $\Re(\mu) \leq 0$ we have $$|1- \mu| = |b(t_0) - a(t_0) | \leq \|a-b\| = \|\Delta(a)- \Delta(b)\| = \|\tilde{a}- \tilde{b}\| =1,$$ which is impossible.\smallskip

If $\Re(\mu) > 0$, having in mind that, by Lemma \ref{l sigma is odd and tildephi is even},  $\tilde{\phi} (t_0,\mu) = \tilde{\phi} (t_0,-\mu)$ with $\Re(-\mu) <0$ and $\tilde{\phi} (t_0,\mu) = \tilde{\phi} (t_0,-\mu) \neq \phi (t_0),$ which is impossible by the conclusion of the previous case.
\end{proof}

From now on we shall only work with the bijection $\phi : X_0\to Y_0$ and its inverse (associated to $\Delta^{-1}$, see Lemma \ref{Y0 is a maximal non-overlapping}). For each $t_0\in X_0$, the mapping $\sigma(t_0, \cdot) = \sigma^{\Delta}(t_0, \cdot) : \mathbb{T} \to \mathbb{T}$ is well-defined. Furthermore, by applying the same arguments to $\Delta^{-1}$, we prove the existence of a new mapping $\sigma^{\Delta^{-1}}(\cdot, \cdot) : Y_0\times \mathbb{T} \to \mathbb{T}$ satisfying the appropriate properties described in the comments after Lemma \ref{Y0 is a maximal non-overlapping}.

\begin{lem}\label{l sigmaDelta-1 inverse of sigma delta} For each $t_0\in X_0$ the mappings  $\sigma^{\Delta}(t_0, \cdot), \sigma^{\Delta^{-1}}(\phi(t_0), \cdot) : \mathbb{T}\to \mathbb{T}$ are bijective and $\sigma^{\Delta^{-1}}(\phi(t_0), \cdot)$ is the inverse of  $\sigma^{\Delta}(t_0, \cdot)$.
\end{lem}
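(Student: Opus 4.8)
The plan is to derive the statement by composing the two defining relations of $\sigma^{\Delta}$ and $\sigma^{\Delta^{-1}}$ and then invoking the uniqueness of the labeling of maximal faces, exactly in the spirit of the proof of Lemma~\ref{lem14}. First I would record the two identities to be used. By \eqref{eq definition tildephi sigma} together with Proposition~\ref{p tildephi and phi are self determined}, for every $t_0\in X_0$ and $\mu\in\mathbb{T}$,
$$\Delta\bigl(F^{X}_{t_0,\mu}\bigr)=F^{Y}_{\phi(t_0),\,\sigma^{\Delta}(t_0,\mu)}.$$
Running the same construction for the surjective isometry $\Delta^{-1}$ and the maximal non-overlapping subset $Y_0$ of $Y$ (which is legitimate by Lemma~\ref{Y0 is a maximal non-overlapping}, and which moreover identifies the associated point map with $\phi^{-1}$), Proposition~\ref{p tildephi and phi are self determined} applied to $\Delta^{-1}$ gives, for every $s_0\in Y_0$ and $\nu\in\mathbb{T}$,
$$\Delta^{-1}\bigl(F^{Y}_{s_0,\nu}\bigr)=F^{X}_{\phi^{-1}(s_0),\,\sigma^{\Delta^{-1}}(s_0,\nu)}.$$

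Next I would substitute $s_0=\phi(t_0)$ and $\nu=\sigma^{\Delta}(t_0,\mu)$ into the second identity and apply the first, obtaining
$$F^{X}_{t_0,\mu}=\Delta^{-1}\bigl(F^{Y}_{\phi(t_0),\,\sigma^{\Delta}(t_0,\mu)}\bigr)=F^{X}_{t_0,\;\sigma^{\Delta^{-1}}(\phi(t_0),\,\sigma^{\Delta}(t_0,\mu))}.$$
Since $t_0\in X_0$, the uniqueness statement \eqref{eq unique labels for X0} forces $\sigma^{\Delta^{-1}}(\phi(t_0),\sigma^{\Delta}(t_0,\mu))=\mu$; as $\mu\in\mathbb{T}$ is arbitrary, this means $\sigma^{\Delta^{-1}}(\phi(t_0),\cdot)\circ\sigma^{\Delta}(t_0,\cdot)=\mathrm{id}_{\mathbb{T}}$, so $\sigma^{\Delta}(t_0,\cdot)$ is injective and $\sigma^{\Delta^{-1}}(\phi(t_0),\cdot)$ is surjective. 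Then I would run the symmetric substitution for $\Delta\circ\Delta^{-1}$: starting from $F^{Y}_{s_0,\nu}=\Delta(\Delta^{-1}(F^{Y}_{s_0,\nu}))$ with $s_0=\phi(t_0)$, using $\phi(\phi^{-1}(s_0))=s_0\in Y_0$ and the same uniqueness of labels in $Y_0$, one gets $\sigma^{\Delta}(t_0,\sigma^{\Delta^{-1}}(\phi(t_0),\nu))=\nu$ for all $\nu\in\mathbb{T}$, i.e. $\sigma^{\Delta}(t_0,\cdot)\circ\sigma^{\Delta^{-1}}(\phi(t_0),\cdot)=\mathrm{id}_{\mathbb{T}}$. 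The two relations together show that both maps are bijections on $\mathbb{T}$ and that $\sigma^{\Delta^{-1}}(\phi(t_0),\cdot)$ is the inverse of $\sigma^{\Delta}(t_0,\cdot)$.

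I expect no genuine obstacle here; this is formal bookkeeping with \eqref{eq unique labels for X0}. The only point needing care is to make sure that the ``appropriate properties'' of $\sigma^{\Delta^{-1}}$ are precisely those encoded in \eqref{eq definition tildephi sigma} and Proposition~\ref{p tildephi and phi are self determined} applied to $\Delta^{-1}$, and that the point map attached to $\Delta^{-1}$ is exactly $\phi^{-1}$ rather than some a priori unrelated bijection of $Y_0$ onto $X_0$ — but this is precisely the content of Lemma~\ref{Y0 is a maximal non-overlapping}, so it is available.
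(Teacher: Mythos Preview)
Your proposal is correct and follows essentially the same approach as the paper: compose the defining identities for $\sigma^{\Delta}$ and $\sigma^{\Delta^{-1}}$ to obtain $F^{X}_{t_0,\mu}=F^{X}_{t_0,\,\sigma^{\Delta^{-1}}(\phi(t_0),\sigma^{\Delta}(t_0,\mu))}$ and then invoke the uniqueness of the labeling. The only difference is cosmetic: the paper displays just the one chain of equalities and declares the rest ``almost straightforward,'' whereas you also spell out the symmetric composition $\sigma^{\Delta}(t_0,\cdot)\circ\sigma^{\Delta^{-1}}(\phi(t_0),\cdot)=\mathrm{id}_{\mathbb{T}}$ explicitly.
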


\begin{proof} Fix an arbitrary $t_0\in X_0$ and $\mu\in \mathbb{T}$. The conclusion follows almost straightforwardly from the identities $$\begin{aligned}F^{X}_{t_0, \mu} &=\Delta^{-1} \left( F^{Y}_{\phi(t_0), \sigma^{\Delta} (t_0, \mu)}\right) = F^{X}_{\phi^{-1}\phi (t_0), \sigma^{\Delta^{-1}} (\phi(t_0), \sigma^{\Delta} (t_0,\mu))}  \\
		&= F^{X}_{t_0, \sigma^{\Delta^{-1}} (\phi(t_0), \sigma^{\Delta} (t_0,\mu))}
\end{aligned}  $$

\end{proof}

We can argue as in Lemma \ref{lem15} to deduce that $\sigma^{\Delta}(t_0, \cdot)$ is an isometric mapping for each $t_0\in X_0$.

\begin{lem}\label{l sigmat0 dot is an isometry} For each $t_0\in X_0$, the mappings $\sigma^{\Delta}(t_0, \cdot), \sigma^{\Delta^{-1}}(\phi(t_0), \cdot) : \mathbb{T}\to \mathbb{T}$ are surjective isometries.
\end{lem}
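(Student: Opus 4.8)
The proof will follow the pattern of the proof of Lemma~\ref{lem15}, with the maximal proper faces $F^{X}_{t_0,\mu}$ playing the role of the sets $\la V_x$ there, and with evaluation at the point $\phi(t_0)$ replacing evaluation at $\phi(x)$. First I would fix $t_0\in X_0$ and, appealing to Remark~\ref{r Urysohn's type lemma} (which in particular guarantees $F^{X}_{t_0,1}\neq\emptyset$), choose a single function $a\in F^{X}_{t_0,1}$, that is, $a\in S(C^{\mathbb{T}}_0(X))$ with $a(t_0)=1$. For every $\mu\in\mathbb{T}$ one then has $\|\mu a\|=1$ and $(\mu a)(t_0)=\mu$, so $\mu a\in F^{X}_{t_0,\mu}$; consequently, by \eqref{eq definition tildephi sigma} and Proposition~\ref{p tildephi and phi are self determined},
$$\Delta(\mu a)\in \Delta\bigl(F^{X}_{t_0,\mu}\bigr)= F^{Y}_{\phi(t_0),\,\sigma^{\Delta}(t_0,\mu)},$$
which is precisely the statement that $\Delta(\mu a)(\phi(t_0))=\sigma^{\Delta}(t_0,\mu)$. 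It is essential here that the \emph{same} $a$ be used for all $\mu$, exactly as a fixed $f\in V_x$ is used throughout the proof of Lemma~\ref{lem15}.

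With this identity at hand the estimate is routine: for $\mu_1,\mu_2\in\mathbb{T}$,
\begin{align*}
|\sigma^{\Delta}(t_0,\mu_1)-\sigma^{\Delta}(t_0,\mu_2)|
&= |\Delta(\mu_1 a)(\phi(t_0))-\Delta(\mu_2 a)(\phi(t_0))| \\
&\leq \|\Delta(\mu_1 a)-\Delta(\mu_2 a)\| = \|\mu_1 a-\mu_2 a\| = |\mu_1-\mu_2|,
\end{align*}
where we used that $\Delta$ is an isometry and $\|a\|=1$. Thus $\sigma^{\Delta}(t_0,\cdot)$ is a contraction of $\mathbb{T}$ into itself. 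Running the same argument with $\Delta^{-1}$ in place of $\Delta$, with $\phi(t_0)\in Y_0$ in place of $t_0$, and with a fixed function in $F^{Y}_{\phi(t_0),1}$ (again provided by Remark~\ref{r Urysohn's type lemma}), shows that $\sigma^{\Delta^{-1}}(\phi(t_0),\cdot)$ is likewise a contraction.

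Finally I would invoke Lemma~\ref{l sigmaDelta-1 inverse of sigma delta}, by which $\sigma^{\Delta^{-1}}(\phi(t_0),\cdot)$ is the inverse of $\sigma^{\Delta}(t_0,\cdot)$; a bijection of $\mathbb{T}$ that is contractive and whose inverse is also contractive must be an isometry, so both maps are surjective isometries of $\mathbb{T}$, as asserted. I do not expect a real obstacle in this argument; the only point requiring attention is the existence of the auxiliary functions $a\in F^{X}_{t_0,1}$ and its analogue on the $Y$-side, which in this setting is not a consequence of the classical Urysohn lemma (recall $C^{\mathbb{T}}_0(X)$ has no peaking functions) but of the $\mathbb{T}$-equivariant version recorded in Remark~\ref{r Urysohn's type lemma}.
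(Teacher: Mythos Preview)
Your proposal is correct and follows essentially the same approach as the paper's own proof: fix $a\in F^{X}_{t_0,1}$, observe $\Delta(\mu a)(\phi(t_0))=\sigma^{\Delta}(t_0,\mu)$ to obtain the contraction estimate, repeat for $\Delta^{-1}$, and then combine with Lemma~\ref{l sigmaDelta-1 inverse of sigma delta}. The only difference is that you make the existence of $a$ explicit via Remark~\ref{r Urysohn's type lemma}, which the paper leaves implicit.
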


\begin{proof} Fix $\mu_1,\mu_2$ in $\mathbb{T}$. Let us fix an element $a\in F_{t_0,1}$. Since, by definition, $\Delta (F^X_{t_0, \mu_j}) = F^Y_{\phi(t_0), \sigma(t_0,\mu_j)}$ it can be easily seen that
$$ \begin{aligned}
	|\sigma(t_0,\mu_1) &-\sigma(t_0,\mu_2)| = |\Delta(\mu_1 a)(\phi(t_0)) - \Delta(\mu_2 a)(\phi(t_0))| \\ &\leq \|\Delta(\mu_1 a) -\Delta(\mu_2 a) \| = \|(\mu_1-\mu _2) a\| = |\mu_1-\mu_2|.
\end{aligned}$$ This proves that $\sigma(t_0,\cdot) =\sigma^{\Delta}(t_0,\cdot)$ is contractive. Replacing $\Delta$ with $\Delta^{-1}$, we deduce that  $\sigma^{\Delta^{-1}}(\phi(t_0), \cdot)$ is contractive too. Since $\sigma^{\Delta^{-1}}(\phi(t_0), \cdot)$ is the inverse of $\sigma^{\Delta}(t_0,\cdot)$ (cf. Lemma \ref{l sigmaDelta-1 inverse of sigma delta}), we can conclude that $\sigma^{\Delta}(t_0, \cdot)$ and $\sigma^{\Delta^{-1}}(\phi(t_0), \cdot)$ are isometries.
\end{proof}

It follows from the previous lemma that for each $t_0\in X_0$, $\sigma^{\Delta}(t_0, \cdot): \mathbb{T}\to \mathbb{T}$ is a surjective isometry. By some of the results commented in the introduction, for example, by the solution to Tingley's problem for $\mathbb{T} = S(\mathbb{C})$ \cite{Ding2002}, we deduce that \begin{equation}\label{eq sigma t0 is identity or conjugation} \sigma^{\Delta}(t_0, \mu ) = \sigma^{\Delta}(t_0, 1 ) \ \mu, \hbox{ or } \sigma^{\Delta}(t_0, \mu ) = \sigma^{\Delta}(t_0, 1) \ \overline{\mu} \ \ (\forall \mu \in \mathbb{T}).
\end{equation} The just stated property determines a partition $X_0 = X_0^{+}\cup X_0^{-}$ with respect to  the subsets
$$X_0^{+} = \{t_0\in X_0 : \sigma^{\Delta}(t_0, \mu ) = \sigma^{\Delta}(t_0, 1 ) \ \mu, \forall \mu\in \mathbb{T}\},$$ and
$$X_0^{-} = \{t_0\in X_0 : \sigma^{\Delta}(t_0, \mu ) = \sigma^{\Delta}(t_0, 1 ) \ \overline{\mu}, \forall \mu\in \mathbb{T}\}.$$\smallskip

The continuous triple functional calculus explained before Lemma \ref{l functional calculus on faces} is now applied in our next technical result.

\begin{lem}\label{l variation of the functional calculus}
Let us fix $t_0\in X_0$ and $a\in S(C_0^{\mathbb{T}}(X))$ with $|a(t_0)| <1$. Set $\lambda = \frac{a(t_0)}{|a(t_0)|}$ if $a(t_0)\neq 0$ and $\lambda =1$ otherwise. Then for each $\varepsilon>0$ there exist $b_{\varepsilon}\in F^{X}_{t_0, 1}$  and $a_{\varepsilon}\in S(C_0^{\mathbb{T}}(X))$ satisfying $$ r a_{\varepsilon} + (1-r |a(t_0)|) \lambda b_{\varepsilon} \in \lambda F^{X}_{t_0,1} = F^{X}_{t_0,\lambda}, \hbox{ for all }  0<r <1,$$ $a_{\varepsilon}(t_0) = a(t_0)$ and $\|a-a_{\varepsilon}\|<\varepsilon$.
\end{lem}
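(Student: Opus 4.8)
The plan is to follow the scheme of the proof of Lemma~\ref{lem18}, the only genuinely new difficulty being that $C_0^{\mathbb{T}}(X)$ is not closed under products, so the ``product of peaking functions'' device used there is unavailable. I would replace it by a single peak-type function of $C_0^{\mathbb{T}}(X)$ multiplied by a $\mathbb{T}$-invariant radial weight; the product stays in $C_0^{\mathbb{T}}(X)$ because the product of an element of $C_0^{\mathbb{T}}(X)$ with a bounded continuous $\mathbb{T}$-invariant function is again in $C_0^{\mathbb{T}}(X)$, a manifestation of the $C^{hom}_0(X)$-module structure of $C_0^{\mathbb{T}}(X)$. The second point I would exploit is that $|a|$ is $\mathbb{T}$-invariant, which lets me run the whole estimate in terms of $|a|$ alone; a consequence is that the approximation built into the statement is not really needed, and one may simply take $a_{\varepsilon}:=a$, so that $a_{\varepsilon}(t_0)=a(t_0)$ and $\|a-a_{\varepsilon}\|=0<\varepsilon$.

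First I would fix the weight. Since $\|a\|=1$, the $\mathbb{T}$-invariant continuous function $|a|$ has values in $[0,1]$, and, as $1-|a(t_0)|>0$,
$$\rho:=\min\Bigl\{\,1,\ \frac{1-|a|}{1-|a(t_0)|}\,\Bigr\}$$
is a continuous $\mathbb{T}$-invariant function on $X$ with values in $[0,1]$ satisfying $\rho(t_0)=1$, $\rho\equiv 1$ on $\{x\in X:|a(x)|\leq |a(t_0)|\}$, and $\rho=\tfrac{1-|a|}{1-|a(t_0)|}$ on $\{x\in X:|a(x)|\geq |a(t_0)|\}$. By the Urysohn-type construction in Remark~\ref{r Urysohn's type lemma} there exists $h\in C_0^{\mathbb{T}}(X)$ with $\|h\|=1$ and $h(t_0)=1$; then I would set $b_{\varepsilon}:=h\,\rho$. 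As $h$ vanishes at infinity and $\rho$ is bounded, continuous and $\mathbb{T}$-invariant, $b_{\varepsilon}$ again lies in $C_0^{\mathbb{T}}(X)$; from $|b_{\varepsilon}|\leq |h|$ and $b_{\varepsilon}(t_0)=1$ one gets $\|b_{\varepsilon}\|=1$, that is, $b_{\varepsilon}\in F^{X}_{t_0, 1}$.

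It then remains to check that $h_r:=r\,a+(1-r|a(t_0)|)\,\lambda\,b_{\varepsilon}$ belongs to $F^{X}_{t_0,\lambda}=\lambda F^{X}_{t_0, 1}$ for every $0<r<1$. Its value at $t_0$ is $r\,a(t_0)+(1-r|a(t_0)|)\lambda=\lambda$, since $a(t_0)=|a(t_0)|\lambda$. For the norm, fix $x\in X$; using $r\geq 0$, $1-r|a(t_0)|>0$, $|\lambda|=1$ and $|b_{\varepsilon}(x)|\leq\rho(x)$, the triangle inequality gives $|h_r(x)|\leq r|a(x)|+(1-r|a(t_0)|)\,\rho(x)$. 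On $\{x\in X:|a(x)|\leq |a(t_0)|\}$ one has $\rho(x)=1$, whence $|h_r(x)|\leq r|a(t_0)|+(1-r|a(t_0)|)=1$; on $\{x\in X:|a(x)|\geq |a(t_0)|\}$ one has $\rho(x)=\tfrac{1-|a(x)|}{1-|a(t_0)|}$, and
$$r|a(x)|+(1-r|a(t_0)|)\,\frac{1-|a(x)|}{1-|a(t_0)|}=1-\frac{(1-r)\bigl(|a(x)|-|a(t_0)|\bigr)}{1-|a(t_0)|}\leq 1 .$$
Hence $\|h_r\|\leq 1$ and $h_r\in F^{X}_{t_0,\lambda}$, which completes the proof.

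The step requiring care --- and the reason this is not a verbatim transcription of the proof of Lemma~\ref{lem18} --- is the passage from the norm-one function $h$ to the product $b_{\varepsilon}=h\rho$: one must make sure that multiplying by the $\mathbb{T}$-invariant scalar factor $\rho$ does not leave $C_0^{\mathbb{T}}(X)$, and that the decay of $\rho$ on $\{x\in X:|a(x)|>|a(t_0)|\}$ is exactly what the last display needs --- while no decay at all is required on $\{x\in X:|a(x)|\leq |a(t_0)|\}$, which is precisely why the choice $a_{\varepsilon}=a$ is legitimate. Everything else parallels the computations in the proof of Lemma~\ref{lem18}.
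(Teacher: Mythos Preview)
Your argument is correct, and in fact cleaner than the paper's. The paper treats the cases $a(t_0)=0$ and $0<|a(t_0)|<1$ separately: in the latter it modifies $a$ via the triple functional calculus, replacing $a$ by $a_\varepsilon=(h_\varepsilon)_t(a)$ for a piecewise-affine $h_\varepsilon\in C_0^{\mathbb T}(\mathcal B_{\mathbb C})$ so that $|a_\varepsilon|\le|a(t_0)|$ on a small $\mathbb T$-invariant neighbourhood $W_\varepsilon$ of $t_0$, and then takes $b_\varepsilon\in F^X_{t_0,1}$ supported in $W_\varepsilon$; the approximation $\|a-a_\varepsilon\|<\varepsilon$ is genuinely used there, and the subsequent Proposition~\ref{p evaluations of images at phi t0} has to carry the extra limit $\varepsilon\to 0$ through. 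By contrast, you leave $a$ untouched and instead damp $b_\varepsilon$ by the $\mathbb T$-invariant scalar $\rho=\min\{1,(1-|a|)/(1-|a(t_0)|)\}$, which is exactly calibrated so that the pointwise bound $r|a(x)|+(1-r|a(t_0)|)\rho(x)\le 1$ holds on all of $X$; this removes the need for any approximation (your choice $a_\varepsilon=a$ is legitimate), for the case distinction, and for the functional calculus of Lemma~\ref{l functional calculus on faces}. The paper's route has the advantage of staying within the functional-calculus theme of the section, but yours is more direct and shows the $\varepsilon$ in the statement to be superfluous, which would simplify the subsequent proof of Proposition~\ref{p evaluations of images at phi t0}.
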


\begin{proof} The case for $a(t_0)=0$ is easier. In such a case, by Remark \ref{r Urysohn's type lemma} and a standard argument, for each $\varepsilon$ there exists $a_{\varepsilon}\in S(C_0^{\mathbb{T}}(X))$ and $b_{\varepsilon}\in F^{X}_{t_0, 1}$ which are orthogonal or disjoint and $\|a-a_{\varepsilon}\|<\varepsilon$. Then clearly, $\|r a_{\varepsilon} + b_{\varepsilon}\| = \max\{r, \|b_{\varepsilon}\|\} =1 =  (r a_{\varepsilon} + b_{\varepsilon}) (t_0)$.\smallskip

Suppose next that $0<|a(t_0)|<1$, and fix a positive $\varepsilon$ such that $|a(t_0)| + \varepsilon <1$. Let us find a $\mathbb{T}$-invariant open neighbourhood $W_{\varepsilon}$ of $t_0$ contained in a 	 $\mathbb{T}$-invariant compact subset such that $|a(s)| < |a(t_0)|+\varepsilon/2,$ for all $s\in W_{\varepsilon}$. Let us find, via Remark \ref{r Urysohn's type lemma} a function $b_{\varepsilon}\in F_{t_0,1}^{X}$ such that $b_{\varepsilon}|_{X\backslash W_{\varepsilon}} \equiv 0$.\smallskip

Let us consider the following $h_{\varepsilon}\in C_0^{\mathbb{T}}(\mathcal{B}_{\mathbb{C}})$ whose values on $[0,1]$ are the following:
 $$
h_{\varepsilon}(s)= \left\{
\begin{array}{ll}
	s, &     0 \leq s \leq |a(t_0)|,\\
	\hbox{affine}, & |a(t_0)| \leq s \leq |a(t_0)|+\varepsilon/2,\\
	|a(t_0)|-\varepsilon/2, &    s= |a(t_0)|+\varepsilon/2, \\
	\hbox{affine}, & |a(t_0)| +\varepsilon/2 \leq s \leq |a(t_0)|+\varepsilon,\\
	s, &     |a(t_0)|+ \varepsilon \leq s \leq 1.    \end{array}%
\right.$$ Let $\iota: \mathcal{B}_{\mathbb{C}} \hookrightarrow \mathbb{C}$ denote the inclusion mapping --note that $\iota\in C_0^{\mathbb{T}} (\mathcal{B}_{\mathbb{C}})$. Set $a_{\varepsilon}:= (h_{\varepsilon})_t (a)$. Since, $\|h_{\varepsilon} - \iota\|=\varepsilon$, it follows that $$\|a_{\varepsilon} - a\|=\| (h_{\varepsilon})_t (a) - \iota(a)\| 
 \leq \varepsilon.$$

Clearly $a_{\varepsilon} (t_0) = a(t_0)$. For $s\in X\backslash W_{\varepsilon}$ we have
$$|(r a_{\varepsilon} + (1-r |a(t_0)|) \lambda b_{\varepsilon}) (s)| = |r a_{\varepsilon}  (s)|\leq r\leq 1.$$

For $s\in W_{\varepsilon}$, we have $|a(s)| < |a(t_0)|+\varepsilon/2,$ and hence $$a_{\varepsilon}(s) = h_{\varepsilon} (a(s)) = h_{\varepsilon} (e^{i \alpha_{s}} |a(s)|) = e^{i \alpha_{s}} h_{\varepsilon} ( |a(s)|) \in e^{i \alpha_{s}} \ [0,|a(t_0)|].$$ Therefore,
$$ |(r a_{\varepsilon} + (1-r |a(t_0)|) \lambda b_{\varepsilon}) (s)| \leq r |a(t_0)| + 1-r |a(t_0)| =1.$$

Finally the identity
$$\begin{aligned}  (r a_{\varepsilon} + (1-r |a(t_0)|) \lambda b_{\varepsilon}) (t_0) & = r a_{\varepsilon} (t_0) + (1-r |a(t_0)|) \frac{a_{\varepsilon} (t_0)}{|a_{\varepsilon} (t_0)|} \\
&=  \frac{a_{\varepsilon} (t_0)}{|a_{\varepsilon} (t_0)|} =  \frac{a (t_0)}{|a (t_0)|} = \lambda,
\end{aligned}$$ proves that  $ r a_{\varepsilon} + (1-r |a(t_0)|) \lambda b_{\varepsilon} \in  F^{X}_{t_0,\lambda},$ as desired.
\end{proof}	

In the next proposition we shall determine the point-evaluations of elements in the image of $\Delta$ at points of the form $\phi(t_0)$.

\begin{prop}\label{p evaluations of images at phi t0} For each $t_0\in X_0$  and each $a\in S(C_0^{\mathbb{T}}(X))$ we have $$\Delta(a) (\phi(t_0)) = \sigma^{\Delta}\left(t_0, \frac{a (t_0)}{|a (t_0)|}\right) |a(t_0)| =   \left\{
	\begin{array}{ll}
	\sigma^{\Delta}\left(t_0, 1\right)	a(t_0), &    \hbox{ if } t_0\in X_0^+,\\
	\ & \ \\
	\sigma^{\Delta}\left(t_0, 1\right)	\overline{a(t_0)}, &   \hbox{ if } t_0\in X_0^-,    \end{array}%
	\right.$$ where $X_0^+$ and $X_0^-$ are the subsets of $X_0$ introduced just before
Lemma~\ref{l variation of the functional calculus}.
\end{prop}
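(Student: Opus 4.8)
The plan is to mimic the structure of the proof of Theorem~\ref{thm1}, using Lemma~\ref{l variation of the functional calculus} as the non-unital analogue of Lemma~\ref{lem18}. Fix $t_0\in X_0$ and $a\in S(C_0^{\mathbb{T}}(X))$, and set $\lambda = a(t_0)/|a(t_0)|$ if $a(t_0)\neq 0$ and $\lambda=1$ otherwise. The first step is the trivial case $|a(t_0)|=1$: then $a\in F^X_{t_0,\lambda}$, so $\Delta(a)\in \Delta(F^X_{t_0,\lambda}) = F^Y_{\phi(t_0),\sigma^{\Delta}(t_0,\lambda)}$ by \eqref{eq definition tildephi sigma} and Proposition~\ref{p tildephi and phi are self determined}, which gives $\Delta(a)(\phi(t_0)) = \sigma^{\Delta}(t_0,\lambda) = \sigma^{\Delta}(t_0,\lambda)|a(t_0)|$ directly.

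Next I would treat the main case $|a(t_0)|<1$. For $0<r<1$ and $\varepsilon>0$, Lemma~\ref{l variation of the functional calculus} produces $a_\varepsilon\in S(C_0^{\mathbb{T}}(X))$ and $b_\varepsilon\in F^X_{t_0,1}$ with $h_r := r a_\varepsilon + (1-r|a(t_0)|)\lambda b_\varepsilon \in F^X_{t_0,\lambda}$, $a_\varepsilon(t_0)=a(t_0)$, and $\|a-a_\varepsilon\|<\varepsilon$. Since $h_r\in F^X_{t_0,\lambda}$, we have $\Delta(h_r)(\phi(t_0)) = \sigma^{\Delta}(t_0,\lambda)$. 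Estimating $\|h_r-a\|\leq \|h_r-a_\varepsilon\|+\|a_\varepsilon-a\| \leq (1-r)+ (1-r|a(t_0)|)+\varepsilon = 2-r-r|a(t_0)|+\varepsilon$, and using that $\Delta$ is an isometry, one gets
$$
1-|\Delta(a)(\phi(t_0))| = |\sigma^{\Delta}(t_0,\lambda)| - |\Delta(a)(\phi(t_0))| \leq |\sigma^{\Delta}(t_0,\lambda) - \Delta(a)(\phi(t_0))| \leq 2-r-r|a(t_0)|+\varepsilon.
$$
Letting $\varepsilon\to 0$ and then $r\to 1$ yields $1-|\Delta(a)(\phi(t_0))| \leq |\sigma^{\Delta}(t_0,\lambda)-\Delta(a)(\phi(t_0))| \leq 1-|a(t_0)|$, hence $|a(t_0)|\leq |\Delta(a)(\phi(t_0))|$. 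Applying the same reasoning to $\Delta^{-1}$ (with $Y_0$, $\phi^{-1}$, and $\sigma^{\Delta^{-1}}$ from Lemma~\ref{Y0 is a maximal non-overlapping}) gives the reverse inequality $|\Delta(a)(\phi(t_0))| \leq |a(t_0)|$, so $|\Delta(a)(\phi(t_0))| = |a(t_0)|$ for all $a$ and all $t_0\in X_0$ — this covers in particular the case $a(t_0)=0$.

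It remains to pin down the phase. Assuming $\Delta(a)(\phi(t_0))\neq 0$, the chain of inequalities above forces equality in $1 = |\sigma^{\Delta}(t_0,\lambda)| \leq |\sigma^{\Delta}(t_0,\lambda)-\Delta(a)(\phi(t_0))| + |\Delta(a)(\phi(t_0))| \leq (1-|a(t_0)|)+|a(t_0)| = 1$; the equality case of the triangle inequality in $\mathbb{C}$ gives $t\geq 0$ with $\sigma^{\Delta}(t_0,\lambda)-\Delta(a)(\phi(t_0)) = t\,\Delta(a)(\phi(t_0))$, so $\Delta(a)(\phi(t_0)) = \sigma^{\Delta}(t_0,\lambda)/(1+t)$, and comparing moduli with $|\Delta(a)(\phi(t_0))|=|a(t_0)|$ gives $1/(1+t) = |a(t_0)|$, i.e.\ $\Delta(a)(\phi(t_0)) = \sigma^{\Delta}(t_0,\lambda)|a(t_0)|$ as claimed. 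Finally, invoking \eqref{eq sigma t0 is identity or conjugation}: if $t_0\in X_0^+$ then $\sigma^{\Delta}(t_0,\lambda) = \sigma^{\Delta}(t_0,1)\lambda$, so $\sigma^{\Delta}(t_0,\lambda)|a(t_0)| = \sigma^{\Delta}(t_0,1)\lambda|a(t_0)| = \sigma^{\Delta}(t_0,1)a(t_0)$; if $t_0\in X_0^-$ then $\sigma^{\Delta}(t_0,\lambda)|a(t_0)| = \sigma^{\Delta}(t_0,1)\overline{\lambda}|a(t_0)| = \sigma^{\Delta}(t_0,1)\overline{a(t_0)}$, completing the proof. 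The only delicate point is the bookkeeping in the $\Delta^{-1}$ step: one must make sure that the point $\phi(t_0)\in Y_0$ is handled by the version of all the preceding lemmata stated for $\Delta^{-1}$ and the maximal non-overlapping set $Y_0$, which is exactly the content of Lemma~\ref{Y0 is a maximal non-overlapping}, so this is routine rather than a genuine obstacle.
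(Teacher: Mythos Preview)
Your proof is correct and follows essentially the same approach as the paper's: use Lemma~\ref{l variation of the functional calculus} to build an element of $F^X_{t_0,\lambda}$ close to $a$, deduce the key inequality $|\sigma^{\Delta}(t_0,\lambda)-\Delta(a)(\phi(t_0))|\leq 1-|a(t_0)|$, get the modulus equality by symmetry with $\Delta^{-1}$, and then read off the phase from the equality case of the triangle inequality. The only cosmetic differences are that you estimate $\|h_r-a\|$ directly (picking up an extra $\varepsilon$) rather than first comparing $h_r$ with $a_\varepsilon$ and then invoking continuity of $\Delta$, and that you fold the case $a(t_0)=0$ into the modulus argument instead of quoting Lemma~\ref{l functions vanishing at phit0}; both are harmless streamlinings.
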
	

\begin{proof} Let us fix $t_0\in X_0$ and $a\in S(C_0^{\mathbb{T}}(X))$. The case $a(t_0)=0$ follows from Lemma \ref{l functions vanishing at phit0}. If $|a(t_0)|=1$ we have $a\in F^{X}_{t_0,a(t_0)}$ and thus
$$\Delta(a)(\phi(t_0)) = \sigma^{\Delta}(t_0, a(t_0)) =   \left\{
\begin{array}{ll}
	\sigma^{\Delta}\left(t_0, 1\right)	a(t_0), &    \hbox{ if } t_0\in X_0^+,\\
	\ & \ \\
	\sigma^{\Delta}\left(t_0, 1\right)	\overline{a(t_0)}, &   \hbox{ if } t_0\in X_0^-    \end{array}%
\right.$$ (cf. \eqref{eq definition tildephi sigma}, Proposition \ref{p tildephi and phi are self determined} and \eqref{eq sigma t0 is identity or conjugation}). We can therefore assume that $1> |a(t_0)| > 0$. Set $\lambda = \frac{a(t_0)}{|a(t_0)|}$.\smallskip

We shall first show that \begin{equation}\label{eq modules of evaluations coincide} | \Delta(a) (\phi(t_0))| =|a(t_0)|.
\end{equation}	
	
By Lemma \ref{l variation of the functional calculus} for each $\varepsilon>0$ there exist $b_{\varepsilon}\in F^{X}_{t_0, 1}$  and $a_{\varepsilon}\in S(C_0^{\mathbb{T}}(X))$ satisfying $$ c_{r,\varepsilon} = r a_{\varepsilon} + (1-r |a(t_0)|) \lambda b_{\varepsilon} \in \lambda F^{X}_{t_0,1} = F^{X}_{t_0,\lambda}, \hbox{ for all } 0<r <1,$$ $a_{\varepsilon}(t_0) = a(t_0)$ and $\|a-a_{\varepsilon}\|<\varepsilon$. In particular,
$$\Delta (c_{r,\varepsilon} ) (\phi(t_0)) = \sigma^{\Delta} (t_0, \lambda),$$ and by definition,
$$\|c_{r,\varepsilon}-a_{\varepsilon} \| \leq (1-r) + 1- r |a(t_0)| = 2 -r -r |a(t_0)|.$$ On the other hand,
\begin{equation}\label{eq long equality on 0922}
\begin{aligned}
	1-|\Delta(a_{\varepsilon})(\phi(t_0))| &= |\sigma^{\Delta} (t_0, \lambda)| -|\Delta(a_{\varepsilon})(\phi(t_0))| \\
	&\leq |\sigma^{\Delta} (t_0, \lambda) - \Delta(a_{\varepsilon})(\phi(t_0))| \\
	& = |\Delta (c_{r,\varepsilon} ) (\phi(t_0)) - \Delta(a_{\varepsilon})(\phi(t_0))| \\
	&\leq  \|\Delta (c_{r,\varepsilon} ) - \Delta(a_{\varepsilon})\| =\|c_{r,\varepsilon} - a_{\varepsilon}\|\\
	& \leq 2 -r -r |a(t_0)|,
\end{aligned}
\end{equation} witnessing that $r +r |a(t_0)|-1 \leq |\Delta(a_{\varepsilon})(\phi(t_0))|$ ($0< r<1$). Letting $r\to 1$ we get $|a(t_0)| \leq |\Delta(a_{\varepsilon})(\phi(t_0))|$. Now, letting $\varepsilon$ tend to zero, it follows from the continuity of $\Delta$ that $$|a(t_0)| \leq |\Delta(a)(\phi(t_0))|.$$

Applying the same argument to $\Delta^{-1},$ $\phi^{-1},$ $\Delta (a)$ and $\phi(t_0)$ in the roles of $\Delta$, $\phi,$ $a$ and $t_0$ we get $$ |\Delta(a) (\phi(t_0))| \leq |\Delta^{-1} \Delta(a)(\phi^{-1}\phi(t_0))| = |a(t_0)|,$$ which concludes the proof of \eqref{eq modules of evaluations coincide}.\smallskip

If we take limits $r\to 1$ and $\varepsilon\to 0$ in the inequalities given by the second and last lines of \eqref{eq long equality on 0922} we arrive to \begin{equation}\label{eq derived from limits in long equality} 
	\left|\sigma^{\Delta} \left(t_0, \frac{a(t_0)}{|a(t_0)|}\right) -\Delta(a)(\phi(t_0))\right| \leq 1-|a(t_0)|.
\end{equation} Consequently, by \eqref{eq modules of evaluations coincide}, we get
$$\begin{aligned}
1 = \left|\sigma^{\Delta} \left(t_0, \frac{a(t_0)}{|a(t_0)|}\right) \right| &\leq \left|\sigma^{\Delta} \left(t_0, \frac{a(t_0)}{|a(t_0)|}\right) -\Delta(a)(\phi(t_0))\right| +\left| \Delta(a)(\phi(t_0))\right|	\\
& \leq 1-|a(t_0)| + |a(t_0)| =1.
\end{aligned}	
$$ It then follows that the equality holds in a triangular inequality, so there exists a positive $\delta>0$ such that $ \delta \sigma^{\Delta} \left(t_0, \frac{a(t_0)}{|a(t_0)|}\right) = \Delta(a)(\phi(t_0))$, and in particular $\delta = |\Delta(a)(\phi(t_0))| = |a(t_0)|$.  We have therefore proved that
$$  \Delta(a)(\phi(t_0)) = \sigma^{\Delta} \left(t_0, \frac{a(t_0)}{|a(t_0)|}\right) \ |a(t_0)|.$$ The rest is clear from \eqref{eq sigma t0 is identity or conjugation} and the subsequent comments.
\end{proof}

For each $t_0$ in a principal $\mathbb{T}$-bundle $X$, we shall write $\overline{\delta_{t_0}}$ for the conjugate linear functional on $C_0^{\mathbb{T}}(X)$ defined by $\overline{\delta_{t_0}} (a) = \overline{a(t_0)}$ ($a\in C_0^{\mathbb{T}}(X)$).

\begin{proof}[Proof of Theorem \ref{t Tingleys problem for commutative JBstar triples}] Let $X_0$ and $Y_0$ denote the maximal non-overlapping\hyphenation{non-over-lapping} subsets employed in the previous arguments. Let $\phi : X_0\to Y_0$ be the bijection presented in Lemmata \ref{l faces maximal non-overlapping} and \ref{Y0 is a maximal non-overlapping}. As we have already commented, the sets $\{\delta_{t_0} : t_0\in X_0\}$ and $\{\delta_{\phi(t_0)} : t_0\in X_0\}$ are norming in $C_0^{\mathbb{T}}(X)^*$ and $C_0^{\mathbb{T}}(Y)^*$, respectively. The same property holds for the set $$\begin{aligned}
\{\sigma^{\Delta}\left(t_0, 1\right)  \delta_{\phi(t_0)} : t_0\in X_0^+\} &\cup 	 \{\sigma^{\Delta}\left(t_0, 1\right)  \overline{\delta_{\phi(t_0)}} : t_0\in X_0^-\}  \\	= \{\delta_{\sigma^{\Delta}\left(t_0, 1\right) \phi(t_0)} : t_0\in X_0^+\} &\cup \{\overline{\delta_{\overline{\sigma^{\Delta}\left(t_0, 1\right)} \phi(t_0)}} : t_0\in X_0^-\}.		
	\end{aligned}$$

By Proposition \ref{p evaluations of images at phi t0} for each $a\in S(C_0^{\mathbb{T}}(X))$ we have \begin{equation}\label{eq evaluations at functionals in the families} \delta_{\phi(t_0) }\left( \Delta(a)\right) =    \left\{
	\begin{array}{ll}
		\delta_{\sigma^{\Delta}\left(t_0, 1\right) \phi(t_0)} (a), &    \hbox{ if } t_0\in X_0^+,\\
		\ & \ \\
		\overline{\delta_{\overline{\sigma^{\Delta}\left(t_0, 1\right)} \phi(t_0)}} (a), &   \hbox{ if } t_0\in X_0^-.    \end{array}%
	\right.
\end{equation}

Lemma 6 in \cite{MoriOza2018} (see also \cite[Lemma 2.1]{FangWang06}) implies that $\Delta$ admits an extension to a surjective real linear isometry. We can alternatively follow a similar argument to that in the proof of Theorem \ref{thm1}, and employ the identity in \eqref{eq evaluations at functionals in the families} to prove that the positive homogeneous extension of $\Delta$ is additive, and hence a real linear extension of $\Delta$. The final conclusions are straightforward consequences of Lemma \ref{l form of surjective real linear isometries between abelian JBstar triples}.
\end{proof}

\smallskip\smallskip

\textbf{Acknowledgements} First author partially supported by EPSRC (UK) project ``Jordan Algebras, Finsler Geometry and Dynamics'' ref. no. EP/R044228/1 and by the Spanish Ministry of Science, Innovation and Universities (MICINN) and European Regional Development Fund project no. PGC2018-093332-B-I00, Junta de Andaluc\'{\i}a grants FQM375 and A-FQM-242-UGR18. Third  author partially supported by JSPS KAKENHI (Japan) Grant Number JP 20K03650. Fourth author partially supported by MCIN / AEI / 10. 13039 / 501100011033 / FEDER ``Una manera de hacer Europa'' project no. PGC2018-093332-B-I00, Junta de Andaluc\'{\i}a grants FQM375, A-FQM-242-UGR18 and PY20$\underline{\ }$ 00255, and by the IMAG--Mar{\'i}a de Maeztu grant CEX2020-001105-M / AEI / 10.13039 / 501100011033.



\begin{thebibliography}{99}

\bibitem{Banakh21Smoothtwodim} T. Banakh, Any isometry between the spheres of absolutely smooth 2-dimensional Banach spaces is linear, \emph{J. Math. Anal. Appl.} \textbf{500} (2021), no. 1, Paper No. 125104.


\bibitem{Banakh21SolutionTingley2dim} T. Banakh, Every 2-dimensional Banach space has the Mazur-Ulam property, \emph{Linear Algebra Appl.} \textbf{632} (2022), 268-280.

\bibitem{BanakhCabello21} T. Banakh, J. Cabello S{\'a}nchez, Every non-smooth 2-dimensional Banach space has the Mazur-Ulam property, \emph{Linear Algebra Appl.} \textbf{625} (2021), 1-19.

\bibitem{BeCuFerPe2018} J. Becerra Guerrero, M. Cueto-Avellaneda, F.J. Fern{\'a}ndez-Polo, A.M. Peralta, On the extension of isometries between the unit spheres of a JBW$^*$-triple and a Banach space, \emph{J. Inst. Math. Jussieu} \textbf{20}, no. 1 (2021), 277--303.

%
%

\bibitem{BurPeRaRu2010} M.J. Burgos, A.M. Peralta, M.I. Ram\'{i}rez, M.E. Ruiz Morillas, Von Neumann regularity in Jordan-Banach triples. \emph{Proceedings of Jordan Structures in Algebra and Analysis Meeting}, 65--88, Editorial C\'{i}rculo Rojo, Almer\'{i}a, 2010.

\bibitem{CabSan19} J. Cabello-S{\'a}nchez, A reflection on Tingley's problem and some applications, \emph{J. Math. Anal. Appl.} \textbf{476} (2) (2019), 319--336.


\bibitem{Cabrera-Rodriguez-vol1}
{M. Cabrera~Garc\'{\i}a, A. Rodr\'{\i}guez~Palacios,}
\newblock {\em Non-associative normed algebras. {V}ol. 1}, vol.~154 of {\em Encyclopedia of Mathematics and its Applications}. \newblock Cambridge University Press, Cambridge, 2014.
\newblock The Vidav-Palmer and Gelfand-Naimark theorems.

\bibitem{Cabrera-Rodriguez-vol2}
{M. Cabrera~Garc\'{\i}a, A. Rodr\'{\i}guez~Palacios,}
\newblock {\em Non-associative normed algebras. {V}ol. 2}, vol.~167 of {\em
Encyclopedia of Mathematics and its Applications}.
\newblock Cambridge University Press, Cambridge, 2018.
\newblock Representation theory and the Zel'manov approach.

\bibitem{CamposJGarciaPacheco21} A. Campos-Jim{\'e}nez, F.J. Garc{\'i}a-Pacheco, Geometric Invariants of Surjective Isometries between Unit Spheres, \emph{Mathematics} 2021; 9(18):2346. https://doi.org/10.3390/math9182346

%
%

\bibitem{ChenDong2011} L. Cheng, Y. Dong, On a generalized Mazur-Ulam question: extension of isometries between unit spheres of Banach spaces, \emph{J. Math. Anal. Appl.} \textbf{377} (2011), 464-470.

\bibitem{Ding2002} G.G. Ding, The 1-Lipschitz mapping between the unit spheres of two Hilbert spaces can be extended to a real linear isometry of the whole space, \emph{Sci. China Ser. A} \textbf{45}, no. 4 (2002), 479-483.

\bibitem{FleJam} R.~Fleming, J.~Jamison, {\it Isometries on Banach spaces: function spaces},  Chapman \& Hall/CRC Monographs and Surveys in Pure and Applied Mathematics, 129. Chapman \& Hall/CRC, Boca Raton, FL, 2003.


\bibitem{ChuDangRussoVentura93} C.-H. Chu, T. Dang, B. Russo, B. Ventura, Surjective isometries of real C$^*$-algebras, \emph{J. London Math. Soc.} (2) \textbf{47}, no. 1  (1993), 97-118.

\bibitem{CuePer18} M. Cueto-Avellaneda, A.M. Peralta, The Mazur--Ulam property for commutative von Neumann algebras, \emph{Linear and Multilinear Algebra} \textbf{68}, No. 2 (2020), 337-362. 

\bibitem{CuePer19} M. Cueto-Avellaneda, A.M. Peralta, On the Mazur--Ulam property for the space of Hilbert-space-valued continuous functions, \emph{J. Math. Anal. Appl.} \textbf{479}, no. 1 (2019), 875--902.

\bibitem{Dang92} T. Dang, Real isometries between JB$^*$-triples, \emph{Proc. Amer. Math. Soc.} \textbf{114} (1992), no. 4, 971--980.

\bibitem{EdFerHosPe2010} C.M. Edwards, F.J. Fern{\'a}ndez-Polo, C.S. Hoskin, A.M. Peralta, On the facial structure of the unit ball in a JB$^*$-triple, \emph{J. Reine Angew. Math.} \textbf{641} (2010), 123-144.

\bibitem{FangWang06} X.N. Fang, J.H. Wang, Extension of isometries between the unit spheres of normed space $E$ and $C(\Omega)$,
\emph{Acta Math. Sinica} (Engl. Ser.), \textbf{22} (2006), 1819--1824.

\bibitem{FerJorPer2018} F.J. Fern{\'a}ndez-Polo, E. Jord{\'a}, A.M. Peralta, Tingley's problem for $p$-Schatten von Neumann classes, \emph{J. Spectr. Theory} \textbf{10}, 3 (2020), 809-841.

\bibitem{FerPe17c} F.J. Fern\'andez-Polo, A.M. Peralta, Tingley's problem through the facial structure of an atomic JBW$^*$-triple, \emph{J. Math. Anal. Appl.} \textbf{455} (2017), 750--760.

\bibitem{FerPe17b} F.J. Fern\'andez-Polo, A.M. Peralta, On the extension of isometries between the unit spheres of a C$^*$-algebra and $B(H)$, \emph{Trans. Amer. Math. Soc.} \textbf{5} (2018), 63--80. 

\bibitem{FerPe17d} F.J. Fern\'andez-Polo, A.M. Peralta, On the extension of isometries between the unit spheres of von Neumann algebras, \emph{J. Math. Anal. Appl.}  \textbf{466} (2018), 127-143. 

\bibitem{FerPe18Adv} F.J. Fern\'andez-Polo, A.M. Peralta, Low rank compact operators and Tingley's problem, \emph{Adv. Math.} \textbf{338} (2018), 1--40.


\bibitem{FriRu82contractive} Y. Friedman, B. Russo, Contractive projections on $C_0(K)$, \emph{Trans. Amer. Math. Soc.} \textbf{273} (1982), no. 1, 57-73.

\bibitem{FriRu83representation} Y. Friedman, B. Russo, Function representation of commutative operator triple systems, \emph{J. London Math. Soc.} (2) \textbf{27} (1983), no. 3, 513-524.

\bibitem{HarWerWerBookMideals} P. Harmand, D. Werner, W. Werner, \emph{M-ideals in Banach spaces and Banach algebras}, Lecture Notes in Mathematics, 1547. Springer-Verlag, Berlin, 1993.

\bibitem{Hat2021} O. Hatori, The Mazur-Ulam property for uniform algebras, preprint 2021. arXiv:2107.01515


\bibitem{hat}O.~Hatori and T.~Miura,
{\it Real linear isometries between function algebras. II},
Cent. Eur. J. Math. {\bf 11} (2013), 1838--1842.

\bibitem{HatOiTog} O. Hatori, S. Oi, R.S. Togashi, Tingley's problems on uniform algebras, \emph{J. Math. Anal. Appl.} \textbf{503} (2021), no. 2, Paper No. 125346, 14 pp.

\bibitem{KalPe2019} O.F.K. Kalenda, A.M. Peralta, Extension of isometries from the unit sphere of a rank-2 Cartan factor, \emph{Anal. Math. Phys.} \textbf{11}, 15 (2021).

\bibitem{Ka83} W. Kaup, A Riemann Mapping Theorem for bounded symmentric domains in complex Banach spaces, \emph{Math. Z.} \textbf{183} (1983), 503--529.

\bibitem{Kapl53} I. Kaplansky, Modules over operator algebras, \emph{Amer. J. Math.} \textbf{75} (1953), 853-839.

%
%
%
%
%
%
\bibitem{miu1} T.~Miura,
{\it Real-linear isometries between function algebras},
Cent. Eur. J. Math. {\bf 9} (2001), 778--788.
DOI: 10.2478/s11533-011-0044-9

\bibitem{Mori2017} M. Mori, Tingley's problem through the facial structure of operator algebras, \emph{J. Math. Anal. Appl.} \textbf{466}, no. 2 (2018), 1281-1298.

\bibitem{MoriOza2018} M. Mori, N. Ozawa, Mankiewicz's theorem and the Mazur--Ulam property for C$^*$-algebras, \emph{Studia Math.} \textbf{250}, no. 3 (2020), 265--281.

\bibitem{Ol74} G.H. Olsen, On the classification of complex Lindenstrauss spaces, \emph{Math. Scand.} \textbf{35} (1974), 237-258.

%
%

\bibitem{Pe2018} A.M. Peralta, A survey on Tingley's problem for operator algebras, \emph{Acta Sci. Math. (Szeged)} \textbf{84} (2018), 81--123.

\bibitem{Pe2020FILOMAT} A.M. Peralta, On the extension of surjective isometries whose domain is the unit sphere of a space of compact operators, to appear in \emph{FILOMAT}. arXiv:2005.11987v1

%

\bibitem{RaoRoy2005} N.V.~Rao, A.K.~Roy, Multiplicatively spectrum-preserving maps of function algebras,
\emph{Proc. Am. Math. Soc.} \textbf{133} (2005), 1135-1142.

\bibitem{RaoRoy2005II} N.V.~Rao, A.K.~Roy, Multiplicatively spectrum-preserving maps of function algebras. II, \emph{Proc. Edinburg Math. Soc.} 48(1) (2005), 219-229.


%
%

\bibitem{Tanaka2014} R. Tanaka, A further property of spherical isometries, \emph{Bull. Aust. Math. Soc.} \textbf{90} (2014), 304--310.

\bibitem{Tan2016Mn} R. Tanaka, The solution of Tingley's problem for the operator norm unit sphere of complex $n \times n$ matrices, \emph{Linear Algebra Appl.} \textbf{494} (2016), 274-285.


\bibitem{Tan2016finitevN} R. Tanaka, Tingley's problem on finite von Neumann algebras, \emph{J. Math. Anal. Appl.}, \textbf{451} (2017), 319-326.

\bibitem{Tan2017} R. Tanaka, Spherical isometries of finite dimensional $C^*$-algebras, \emph{J. Math. Anal. Appl.} \textbf{445}, no. 1 (2017), 337-341.

\bibitem{Ting1987} D. Tingley, Isometries of the unit sphere, \emph{Geom. Dedicata} \textbf{22} (1987), 371-378.



\bibitem{Wang} R.S. Wang, Isometries between the unit spheres of $C_0(\Omega)$ type spaces, \emph{Acta Math. Sci.} (English Ed.) \textbf{14}, no. 1 (1994), 82-89.

\bibitem{YangZhao2014} X. Yang, X. Zhao, On the extension problems of isometric and nonexpansive mappings. In: \emph{Mathematics without boundaries}. Edited by Themistocles M. Rassias and Panos M. Pardalos. 725-748, Springer, New York, 2014.


\bibitem{Zettl83} H. Zettl, A characterization of ternary rings of operators, \emph{Adv. in Math.} \textbf{48} (1983), no. 2, 117-143.
\end{thebibliography}
\end{document}